\begin{document}

\newtheorem{definition}{Definition}
\newtheorem{proposition}{Proposition}
\newtheorem{theorem}{Theorem}
\newtheorem{lemma}{Lemma}
\newtheorem{remark}{Remark}
\newtheorem{claim}{Claim}
\newtheorem{fact}{Fact}
\def\A{\mathcal{A}}

\def\arccot{\mathrm{arccot}}
\def\coord#1#2{\langle#2,\,#1\rangle}
\def\coordi#1#2#3{\langle#2,\,#1\rangle}
\def\degenerate{\mathrm{dgn}}
\def\fra#1#2{{#1}/{#2}}
\def\Nset{\mathbf{N}}
\def\P{P}
\def\PQ{Q}
\def\Q{P'}
\def\Refl#1{R({#1})}
\def\Rset{\mathbf{R}}
\def\SO{\mathit{SO}}
\def\S{\mathcal{S}}
\def\Sset{\mathbf{S}}
\def\TRPZ{PDW}
\def\T{\mathcal{T}}

\title[Spherical isohedral tilings over pseudo-double wheels]{Classification of spherical tilings by congruent quadrangles over
  pseudo-double wheels ({II})\\
--- the  isohedral case}
\date{\today}            
\thanks{The  author was supported by JSPS KAKENHI Grant Number 16K05247.}

\subjclass{Primary 52C20; Secondary 05B45, 51M20}
\keywords{graph, skeleton, spherical monohedral tiling, spherical quadrangle,
spherical trigonometry, symmetry, tile-transitive}

\author[Y. Akama]{Yohji Akama}
\address{Mathematical Institute\\
  Graduate School of Science \\
  Tohoku University\\
  Sendai 980-0845 JAPAN} 
\email{yoji.akama.e8@tohoku.ac.jp}
\urladdr{http://www.math.tohoku.ac.jp/akama/stcq/}

  \begin{abstract}
We classify all edge-to-edge spherical isohedral 4-gonal tilings such that the skeletons are pseudo-double wheels. 
 For this, we characterize these spherical tilings  by a quadratic equation for the cosine of an  edge-length. 
 By the classification, we see:
 there are indeed two non-congruent, edge-to-edge spherical isohedral 4-gonal tilings 
 such that the skeletons are the same pseudo-double wheel and the cyclic list of the four inner angles of the tiles are the same. 
 This contrasts with that every edge-to-edge spherical tiling by congruent 3-gons is determined by the skeleton and the inner angles of the skeleton. 
We show that for a particular spherical isohedral tiling over the pseudo-double wheel of twelve faces, 
the quadratic equation has a double solution and 
 the copies of the tile also organize a spherical non-isohedral tiling
   over the same skeleton. 
  \end{abstract}
\maketitle

\section{Introduction}
\label{sec:introduction} Throughout this paper, we are concerned with
edge-to-edge tilings.  A tiling $\T$ is called \emph{isohedral}~(or
\emph{tile-transitive}), if for any pair of tiles of $\T$, there is a
symmetry operation of $\T$ that transforms one tile to the other.  In characterizing the
\emph{skeletons} of spherical (isohedral) tilings, an important graph is a
\emph{pseudo-double wheel}~(the dual graph of the skeleton of an
antiprism~\cite[p.~19]{Deza}. See Figure~\ref{summary}~(above)). It satisfies the following:
\begin{itemize}
\item The
skeletons of spherical tilings by spherical 4-gons are generated from
pseudo-double wheels by means of applications of two local
expansions~\cite{MR2186681}.  
\item  The skeletons of spherical isohedral
tilings consists of pseudo-double wheels, an infinite series of graphs,
and eighteen sporadic graphs~\cite{MR661770}. 
\end{itemize}
In Section~\ref{sec:char}, we prove: for every spherical
tiling $\T$ by congruent spherical 4-gons with the skeleton being a pseudo-double wheel $G$, $\T$ is isohedral if and only if every graph automorphism~\cite[Sect.~1.1]{Deza} of $G$ respects the  edge-length and inner angles of $\T$.

\def\wwd{4}
  \begin{figure}[ht]
		\begin{tikzpicture}[scale=0.9]
     \node at (-\wwd,0)
		 {\includegraphics[scale=0.13]{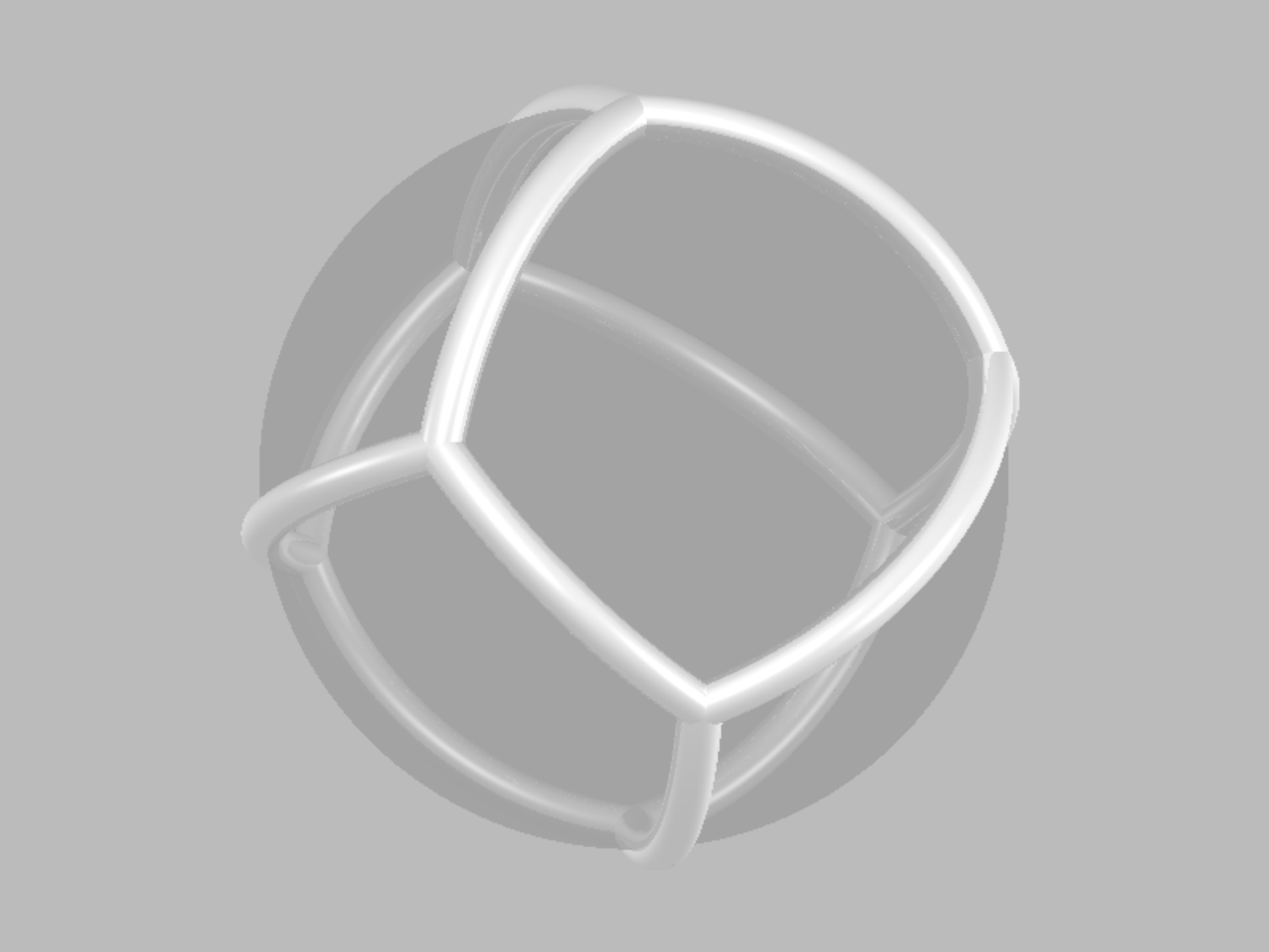}};
     \node at (-\wwd,1.3) {$N$};
     \node at (-\wwd,   .8) {$\beta$};
     \node at (-\wwd-.4,.6) {$a$};
     \node at (-\wwd+.8, .7) {$a$};
     \node at (-\wwd-.4, .1)    {$\alpha$};
		 \node at (-\wwd+.9, .2)   {$\gamma$};
		 \node at (-\wwd,-1.3) {$S$};
     \node at (0,0)
		 {\includegraphics[scale=0.13]{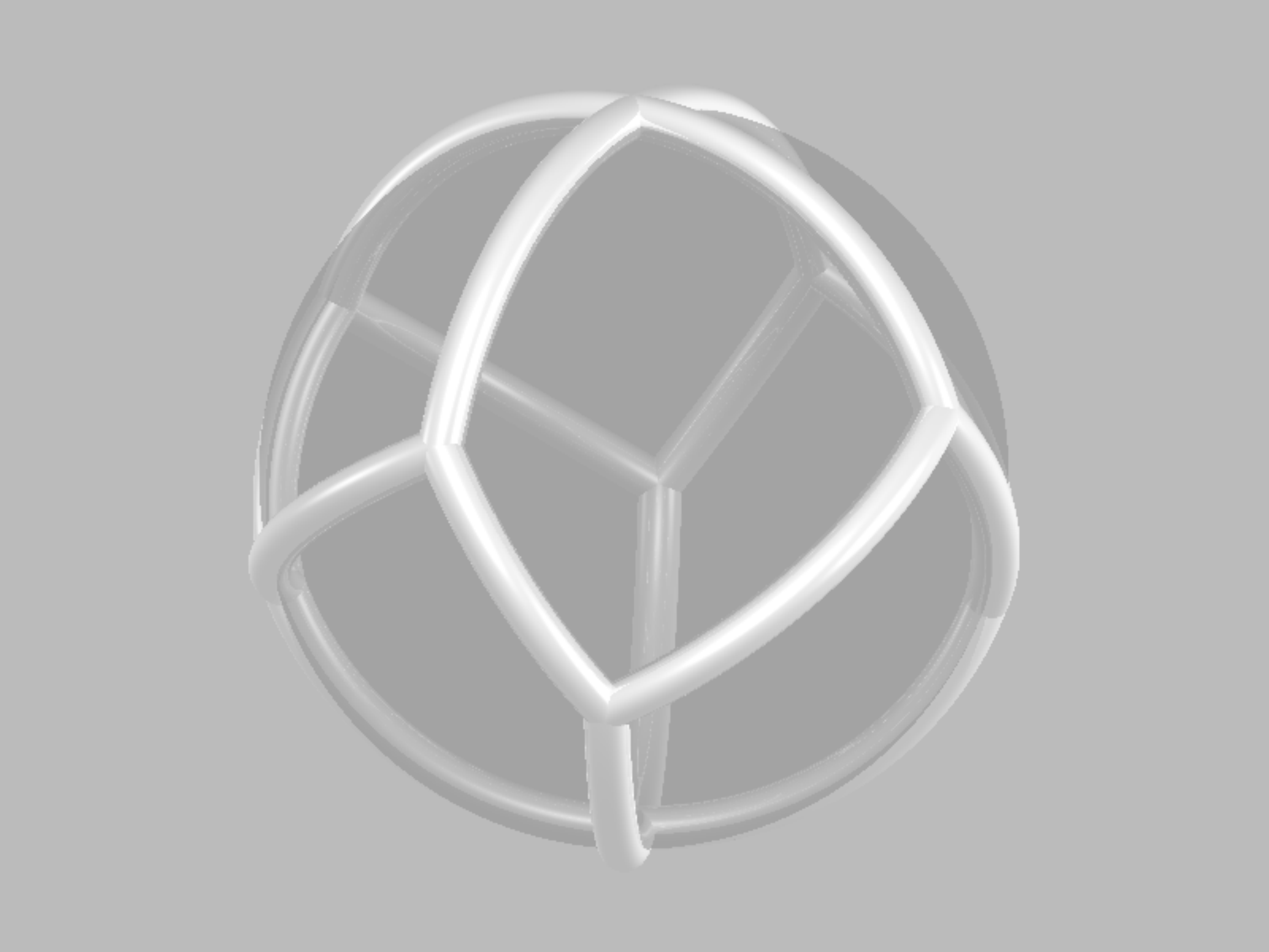}};
     \node at (0,1.3) {$N$};
     \node at (0,.8) {$\beta$};
     \node at (-.5, .6) {$a$};
     \node at (.7,  .6) {$a$};
     \node at (-.4, .1)    {$\alpha$};
		 \node at (.7, .1)   {$\gamma$};
		 \node at (0,-1.3) {$S$};
     \node at (4,0)
		 {\includegraphics[scale=0.13]{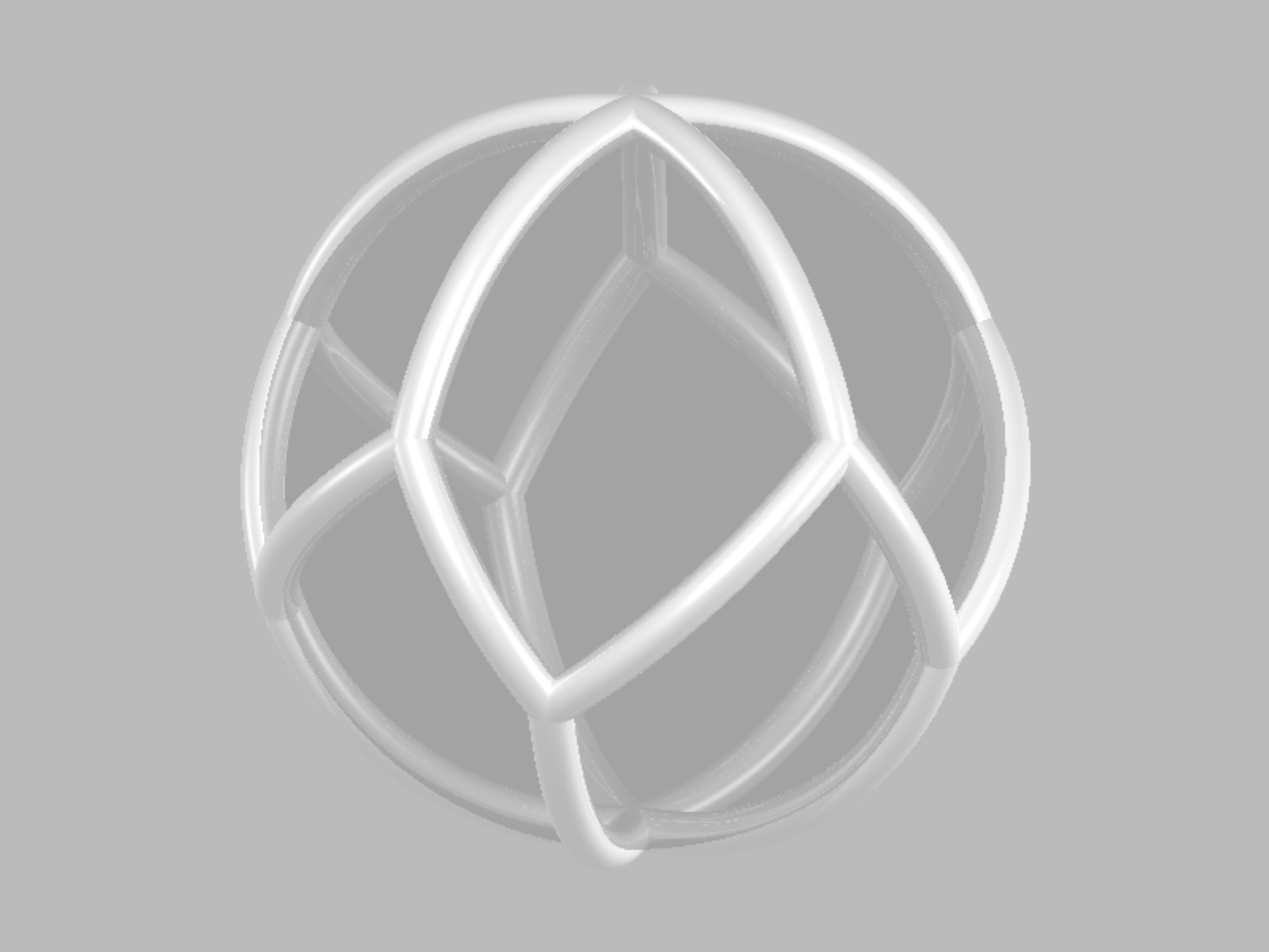}};
		 \node at (\wwd,1.3) {$N$};
     \node at (\wwd-.1,.8) {$\beta$};
     \node at (-.5+\wwd, .6) {$a$};
     \node at (.5+\wwd,.5) {$a$};
     \node at (-.5+\wwd, .1)    {$\alpha$};
		 \node at (.4+\wwd, .1)   {$\gamma$};
		 \node at (\wwd,-1.3) {$S$};
	       \def\rds{0.6}
	       \def\hgt{2.9}
	       \begin{scope}[shift={(-4.5,\hgt)},scale=1.1]

		\draw[ultra thick] (30:\rds) node {$\bullet$} -- (0.4*\wwd,-\rds) node[right] {$S$};
		\draw[ultra thick] (150:\rds) node {$\bullet$} .. controls (-.1*\wwd,2*\rds) and
		(.2*\wwd,1.5*\rds) .. (0.4*\wwd,-\rds)node {$\bullet$};
		\draw[ultra thick] (270:\rds) node {$\bullet$}-- (0.4*\wwd,-\rds);
		\draw[ultra thick] (0,0) node[below] {$N$} circle (\rds);
		\draw[ultra thick] (0,0) node {$\bullet$}-- (90:\rds)node {$\bullet$};
		\draw[ultra thick] (0,0) node {$\bullet$}-- (210:\rds)node {$\bullet$};
		\draw[ultra thick] (0,0) node {$\bullet$}-- (-30:\rds)node {$\bullet$};
	       \end{scope}
	       \begin{scope}[shift={(-.5,\hgt)},scale=1.1]	       \def\rds{0.6}\def\wwd{4}
                 \node at(45:\rds*.4) {$N$};
		\draw[ultra thick] (30:\rds) node {$\bullet$} -- (0.4*\wwd,-\rds) node[right] {$S$};
		\draw[ultra thick] (135:\rds) node {$\bullet$} .. controls (-.1*\wwd,1.5*\rds) and
		(.2*\wwd,1.5*\rds) .. (0.4*\wwd,-\rds)node {$\bullet$};
		\draw[ultra thick] (-135:\rds) node {$\bullet$} .. controls (-.1*\wwd,-1.5*\rds) and
		(.2*\wwd,-1.5*\rds) .. (0.4*\wwd,-\rds);
		\draw[ultra thick] (-45:\rds) node {$\bullet$}-- (0.4*\wwd,-\rds);
		\draw[ultra thick] (0,0) circle (\rds);
		\draw[ultra thick] (0,0) node {$\bullet$}-- (0:\rds)node {$\bullet$};
		\draw[ultra thick] (0,0) node {$\bullet$}-- (90:\rds)node {$\bullet$};
		\draw[ultra thick] (0,0) node {$\bullet$}-- (180:\rds)node {$\bullet$};
		\draw[ultra thick] (0,0) node {$\bullet$}-- (270:\rds)node {$\bullet$};
	       \end{scope}
	       \begin{scope}[shift={(3.7,\hgt)},scale=1.1]	       \def\rds{0.6}\def\wwd{4}

		\draw[ultra thick] (30:\rds) node {$\bullet$} -- (0.4*\wwd,-\rds) node[right] {$S$};
		\draw[ultra thick] (100:\rds)node {$\bullet$} .. controls (-.1*\wwd,1.5*\rds) and
		(.2*\wwd,1.5*\rds) .. (0.4*\wwd,-\rds)node {$\bullet$};
		\draw[ultra thick] (180:\rds)node {$\bullet$} .. controls (200:2.5*\rds) and
		(275:2.5*\rds) .. (0.4*\wwd,-\rds);
		\draw[ultra thick] (-100:\rds)node {$\bullet$} .. controls (-.1*\wwd,-1.5*\rds) and
		(.2*\wwd,-1.5*\rds) .. (0.4*\wwd,-\rds);
		\draw[ultra thick] (-45:\rds) node {$\bullet$}-- (0.4*\wwd,-\rds);
		\draw[ultra thick] (0,0) circle (\rds);

		\draw[ultra thick] (0,0) node[left] {$N$} -- (0:\rds)node {$\bullet$};
		\draw[ultra thick] (0,0) node {$\bullet$}-- (72:\rds)node {$\bullet$};
		\draw[ultra thick] (0,0) node {$\bullet$}-- (144:\rds)node {$\bullet$};
		\draw[ultra thick] (0,0) node {$\bullet$}-- (216:\rds)node {$\bullet$};
		\draw[ultra thick] (0,0) node {$\bullet$}-- (288:\rds)node {$\bullet$};
	       \end{scope}

		\end{tikzpicture}
   \caption{The above are pseudo-double wheels of $2n$ faces~($n=3,4,5$). The below are
   spherical isohedral tilings by $2n$ congruent quadrangles such that the skeletons are
 pseudo-double wheels~($n=3,4,5$).  It holds that $(\cos a)^2-\cot(\pi/n)(\cot \alpha +
  \cot\gamma)\cos a - \cot\alpha\cot\gamma=0$.
 \label{summary}}
  \end{figure}
 
In any spherical tiling
by congruent quadrangles, the tile has a pair of adjacent, equilateral
edges~\cite{agaoka:quad}. For spherical tilings by congruent quadrangles
$T$ with the skeleton being pseudo-double wheels, fix the notation for
the angles and the edges of the quadrangular tile $T$ as
Figure~\ref{fig:4gon}.  
\begin{figure}[ht]
   \begin{tikzpicture}[scale=.8]
    \node at (0.3,0.2) {\Large $\beta$};
    \draw[->] (0.8,0) arc [start angle=0, end angle=64, radius=.7];

    \node at (0.4,1.2) {\Large $a$};
   \draw (0,0)--(1,2);

    \node at (1.1,1.7) {\Large $\alpha$};
    \draw[->] (.8,1.5) arc [start angle=260, end angle=310, radius=.9];

    \node at (1.7,2.1) {\Large $b$};

   \draw[line width=1mm] (1,2)--(2.3,1.55);
    \node at (2.1,1.38) {\Large $\delta$};
   \draw[->] (1.8,1.6) arc [start angle=170, end angle=260,radius=.5];
    
   \draw[dotted, line width=1mm] (2.3,1.55) -- (2.3,0);
    \node at (2.1,0.2) {\Large $\gamma$};
    \draw[<-] (1.7,0.07) arc [start angle=179, end angle=98, radius=.6];

    \node at (2.5,0.9) {\Large $c$};
    \draw (2.23,0) -- (0,0);
    \node at (1.1,-0.2) {\Large $a$};
   \end{tikzpicture}
 \caption{The notation for angles and edges of the quadrangular tile. 
 Some among $\alpha,\beta,\gamma,\delta$ are equal, and
 some among $a,b,c$ are equal. 
 \label{fig:4gon}}
\end{figure}
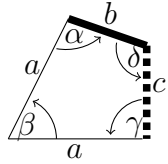
For the spherical \emph{isohedral} tilings such that the skeletons are pseudo-double
wheels, we bijectively parameterize the tiles with the pair of the
edge-length $a$ of the tile and a typical angle, in
Section~\ref{sec:tiles}. Then
we characterize the tiles as
follows~(Section~\ref{sec:q}). 
Given a spherical 4-gon $T$ such that $\alpha,\beta,\gamma$ are adjacent
inner angles and $\beta$ is an inner angle between two edges of length
$a$. $T$ is a tile of some spherical \emph{isohedral} tiling $\T$ by $2n$ congruent spherical 4-gons
with the skeleton of $\T$ being a pseudo-double wheel, if and only if
\begin{align*}
(\cos a)^2-\cot\frac{\pi}{n}(\cot\alpha+\cot\gamma)\cos a -
\cot\alpha\cot\gamma=0.\end{align*}
For notations, see Figure~\ref{summary}~(below).
In Section~\ref{sec:main}, by solving this equation, we classify all the tiles of spherical
isohedral tilings such that the skeleton of the tilings are
pseudo-double wheels. 
By the classification, we see:
 there are indeed two non-congruent, edge-to-edge spherical isohedral 4-gonal tilings 
 such that the skeletons are the same pseudo-double wheel and the cyclic list of the four inner angles of the tiles are the same. 
 This contrasts with that every edge-to-edge spherical tiling by congruent 3-gons is determined by the skeleton and the inner angles of the skeleton~\cite{MR1954054}. 
 In Section~\ref{sec:noniso}, we show that for a particular spherical isohedral tiling over the pseudo-double wheel of twelve faces, 
the quadratic equation has a double solution. Moreover, 
 the copies of the tile also organize a less symmetric, spherical non-isohedral tiling $\T$ over the same skeleton.  
 Based on this tiling $\T$ and Gr\"unbaum-Shephard's characterization theorem~\cite{MR661770} of the skeletons of spherical isohedral tilings, we briefly discuss our classification of spherical isohedral tilings over pseudo-double wheels.

\section{Basic definitions\label{sec:basic definitions}}

By a \emph{spherical $4$-gon}, we mean a topological disk $T$
on the two-dimensional unit sphere $\Sset^2$ such that $T$ is
circumscribed by four straight edges, (1) any inner angle between
adjacent edges of $T$ is strictly between 0 and $2\pi$ but not $\pi$, and
(2) $T$ is contained in the interior of a hemisphere.  By  ``quadrangle,'' we mean a
``spherical 4-gon.''  The congruence on the sphere is just the orthogonal
transformation, and ``sphere'' and ``spherical'' means the
two-dimensional unit sphere $\Sset^2$.  We identify spherical tilings modulo a special
orthogonal group $SO(3)$.

\begin{definition}[pseudo-double wheel~\protect{\cite{MR2186681}}]
For an even number $F\ge6$, a \emph{pseudo-double
wheel\/} of $F$ faces is a map such that

\begin{itemize}
\item the graph is obtained from a cycle
$(v_0, v_1,
v_2, \ldots, v_{F-1})$, by adjoining a new vertex $N$ to each $v_{2i}$
 $(0\le i<F/2)$ and then by adjoining a new vertex $S$ to each $v_{2i+1}$
 $(0\le i<F/2)$. We identify the suffix $i$ of the vertex $v_i$ modulo
 $F$. 
\item
The cyclic order 
at the vertex $N$ is defined as follows: the edge $N v_{2i+2}$ is next
      to the edge $N v_{2i}$. 
The cyclic order at the vertex $v_{2i}$ ($0\le i\le F/2$) is: the edge
 $v_{2i} N$ is next to the edge $v_{2i} v_{2i+1}$, and $v_{2i} v_{2i+1}$ is next to the edge $v_{2i}
v_{2i-1}$.
The cyclic order 
at the vertex $S$ is: the edge $S v_{2i-1}$ is next to the edge $S v_{2i+1}$. 
The cyclic order at the vertex $v_{2i+1}$ ($0\le i< F/2$) is:
the edge $v_{2i+1} S$ is next to the edge $v_{2i+1} v_{2i}$, and $v_{2i+1} v_{2i}$ is
      next to the edge $v_{2i+1}v_{2i+2}$.
\end{itemize}
\end{definition}
The skeleton of the cube is the pseudo-double wheel of six faces.
\def\wwd{4}
 \begin{figure}[ht]
    \begin{tikzpicture}[scale=.9]
    \node at (-\wwd,0) {\includegraphics[scale=0.13]{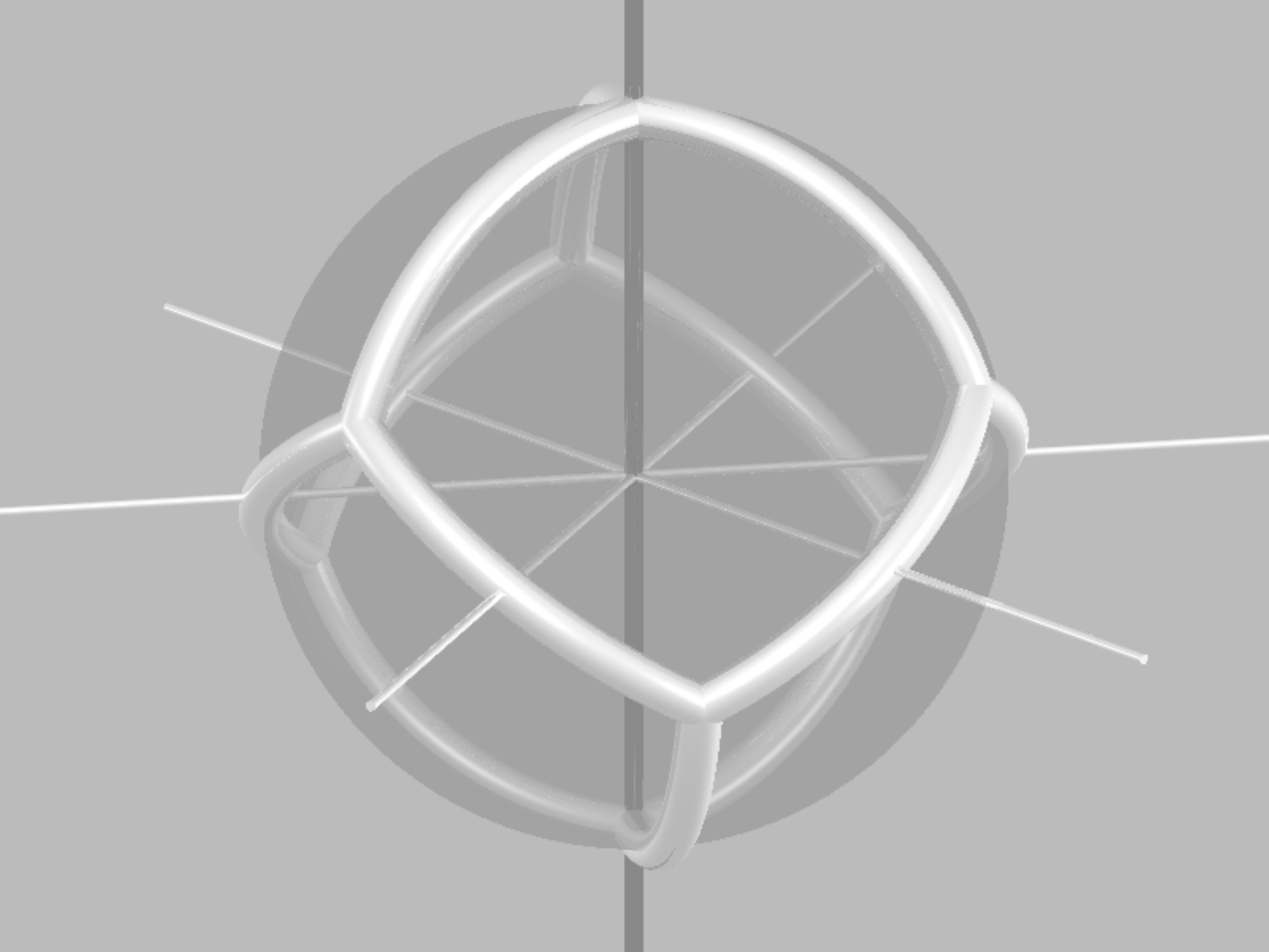}};
     \node at (-\wwd,1.5) {$\rho$};
     \node at (-5.6,-.2) {$\eta_1$};
     \node at (-5,-.9) {$\eta_2$};
     \node at (-2.4,-.6) {$\eta_3$};
     \node at (0,0) {\includegraphics[scale=0.13]{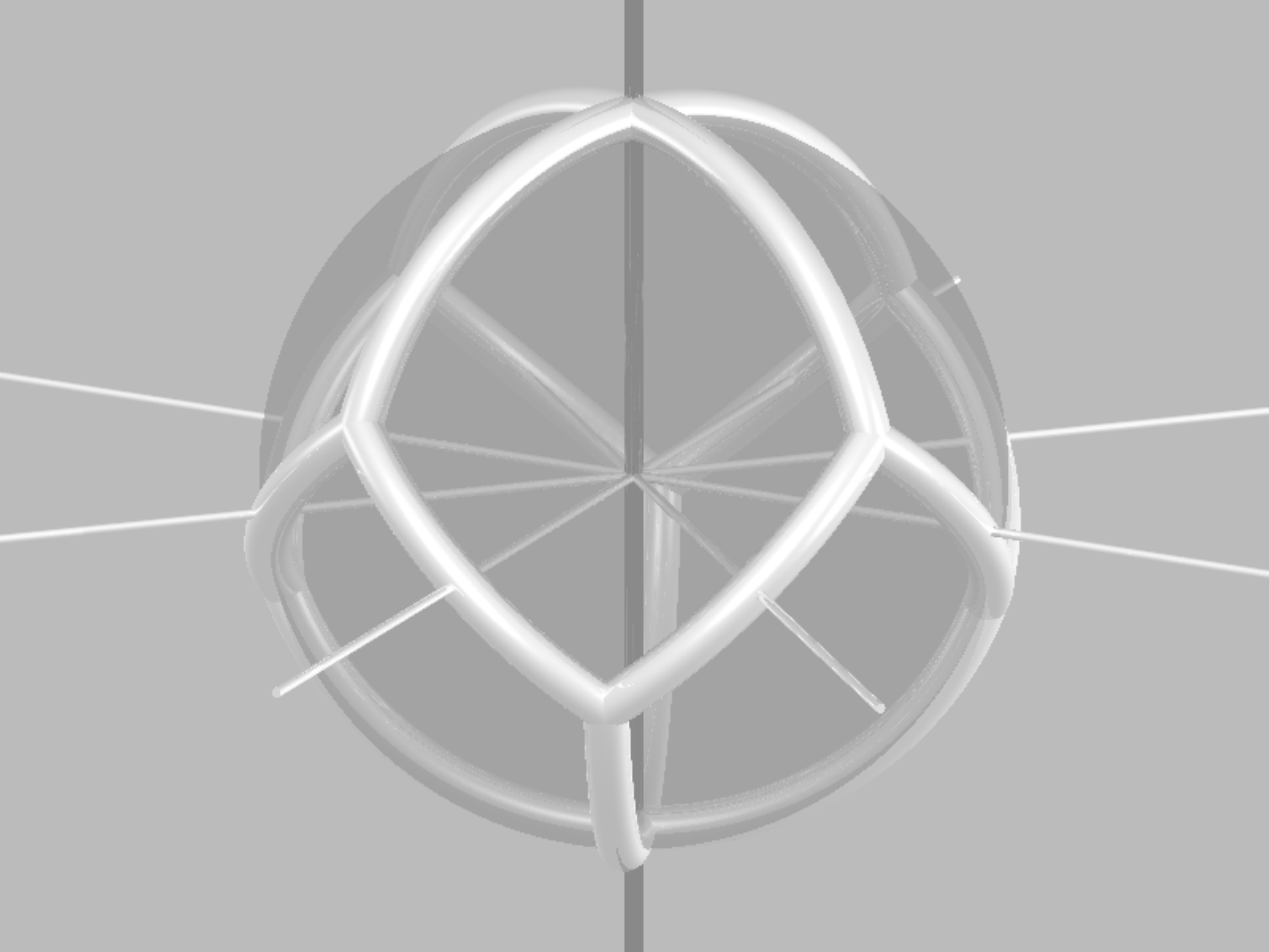}};
     \node at (0,1.5) {$\rho$};
     \node at (-5.6+\wwd,-.2) {$\eta_1$};
     \node at (-5.1+\wwd,-.8) {$\eta_2$};
     \node at (-3+\wwd,-.9) {$\eta_3$};
     \node at (-2.4+\wwd,-.2) {$\eta_4$};
     \node at (4,0) {\includegraphics[scale=0.13]{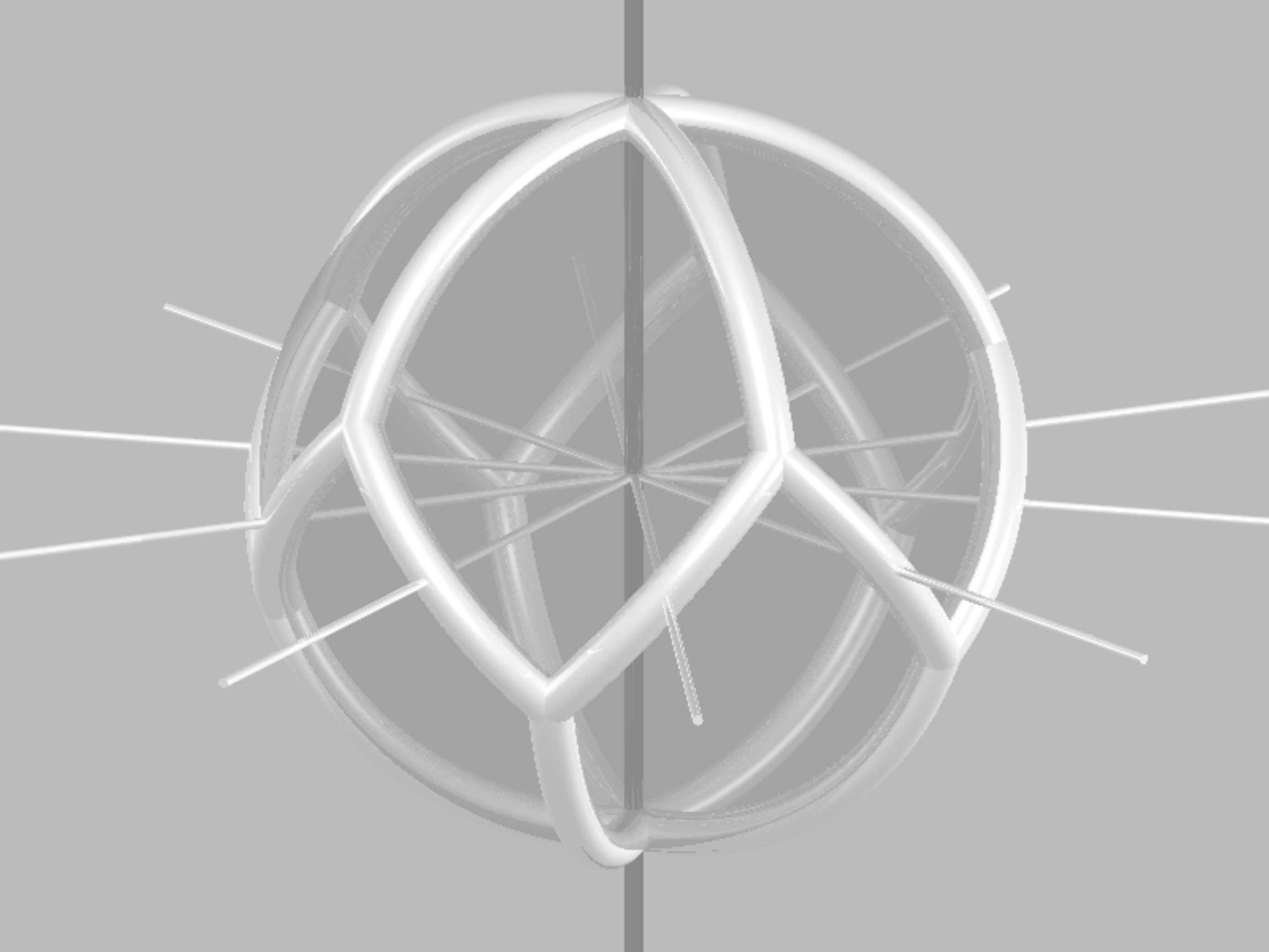}};
     \node at (\wwd,1.5) {$\rho$};
     \node at (-5.6+2*\wwd, -.2) {$\eta_1$};
     \node at (-5.2+2*\wwd,-.8) {$\eta_2$};
     \node at (-3.8+2*\wwd,-.9) {$\eta_3$};
     \node at (-2.4+2*\wwd,-.7) {$\eta_4$};
     \node at (-2.3+2*\wwd,-.1) {$\eta_5$};
    \end{tikzpicture}
\caption{Spherical tilings by $2n$ congruent quadrangles such that the skeletons are
 pseudo-double wheels~($n=3,4,5$). Each is  isohedral, as any tile is transformed to any tile with the
 vertical $n$-fold axis $\rho$ and $n$ horizontal $2$-fold axes $\eta_1,\ldots,\eta_n$. 
 \label{p_expansion}}
 \end{figure}

\emph{In the rest of this paper, we fix the
orientation of the sphere. By $\angle P Q R$, we mean the angle from
$PQ$ to $RQ$ in the orientation of the sphere, and assume that (1)
$\alpha,\beta,\gamma,\delta\in(0,\,\pi)\cup(\pi,2\pi)$, and
$a,b,c\in(0,\,\pi)$, and (2) for tiles, edges represented by
solid~$($thick, dotted, resp.$)$ lines have length $a$~$(b,c$,
resp.\/$)$.}  We say a quadrangle is \emph{concave}, if it has an inner
angle greater than $\pi$. We are concerned with all spherical isohedral
tilings by congruent possibly concave quadrangles such that the skeletons are pseudo-double wheels.

 \begin{proposition}\label{prop:vinberg}
  \begin{enumerate}
   \item \label{assert:vinberg} $($\cite[p.~62]{MR1254932}$)$
If 
$0<A,B,C<\pi,\ 
 A + B + C >\pi,\ \ -A  +  B  +  C  <\pi,\ \ A  -  B 
 +  C  <\pi$ and $A  +  B  -  C  <\pi$, 
then there exists \emph{uniquely} up to congruence a spherical
3-gon on the two-dimensional unit sphere $\Sset^2$ such that
the inner angles are $A,  B$
 and $C$. The converse is also true. 

\item Let $ABC$ be a spherical 3-gon, and let $a,b,c$ be the sides
 opposite to the inner angles $A,B,C$, respectively.  Then
 \begin{enumerate}
   \item $($Dual cosine law for the sphere~$($Spherical cosine theorem for angles$)$~\cite[p.~65]{MR1254932}$)$\label{assert:scla}
$\cos  A =- \cos  B \cos C +\sin B \sin C \cos a$.

   \item $($Cosine law for the sphere~$($Spherical cosine theorem$)$~\cite[p.~65]{MR1254932}$)$\label{assert:scl}
$\cos  a = \cos b \cos c +\sin b \sin c \cos A$.
\end{enumerate}
\end{enumerate}

 \end{proposition}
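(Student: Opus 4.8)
The plan is to prove the three assertions in the order $2(b)$, then $2(a)$, then $1$, since each reduces to the preceding one together with the classical device of the polar (dual) triangle. The only genuinely computational input is the ordinary cosine law $2(b)$, which I would obtain from the vector model of $\Sset^2$; the dual cosine law $2(a)$ and the whole of the existence-uniqueness statement $1$ then follow formally by replacing each quantity with its supplement.

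First I would establish $2(b)$ directly. Realize the three vertices of the $3$-gon as unit vectors $\mathbf{u},\mathbf{v},\mathbf{w}\in\Sset^2$, taken opposite to the sides $a,b,c$ respectively, so that the side lengths are the pairwise angles and $\cos a=\mathbf{v}\cdot\mathbf{w}$, $\cos b=\mathbf{w}\cdot\mathbf{u}$, $\cos c=\mathbf{u}\cdot\mathbf{v}$. The inner angle $A$ is the angle at $\mathbf{u}$ between the great-circle arcs toward $\mathbf{v}$ and toward $\mathbf{w}$, i.e. the angle between the normalized tangent directions $(\mathbf{v}-(\mathbf{u}\cdot\mathbf{v})\mathbf{u})/\sin c$ and $(\mathbf{w}-(\mathbf{u}\cdot\mathbf{w})\mathbf{u})/\sin b$. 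Taking the inner product of these two unit vectors and simplifying with the three dot-product identities collapses the cross terms and yields $\cos A=(\cos a-\cos b\cos c)/(\sin b\sin c)$, which is exactly $\cos a=\cos b\cos c+\sin b\sin c\cos A$; the remaining cases are cyclic permutations.

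Next, for the dual cosine law $2(a)$, I would pass to the polar triangle $A'B'C'$ whose vertices are the outward unit normals of the three edge-planes of $ABC$; its sides and angles are the supplements $\pi-A,\pi-B,\pi-C$ and $\pi-a,\pi-b,\pi-c$ of the original ones. Substituting these supplements into the already-proved law $2(b)$ applied to $A'B'C'$ turns $\cos(\pi-A)=\cos(\pi-B)\cos(\pi-C)+\sin(\pi-B)\sin(\pi-C)\cos(\pi-a)$ into precisely $\cos A=-\cos B\cos C+\sin B\sin C\cos a$. The same polarity also converts the existence-uniqueness problem of assertion $1$, ``prescribe the three angles $A,B,C$,'' into ``prescribe the three sides $\pi-A,\pi-B,\pi-C$'' of the polar triangle. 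I would then check that the five hypotheses on $A,B,C$ are, under this substitution, equivalent to the side conditions $0<\pi-A,\pi-B,\pi-C<\pi$, the three spherical triangle inequalities such as $(\pi-A)+(\pi-B)>\pi-C$, and the perimeter bound $(\pi-A)+(\pi-B)+(\pi-C)<2\pi$: indeed $A+B+C>\pi$ becomes the perimeter bound, while each inequality such as $A+B-C<\pi$ becomes one of the triangle inequalities. Thus assertion $1$ reduces to its SSS counterpart: three numbers in $(0,\pi)$ obeying the triangle inequalities and the perimeter bound are realized, uniquely up to congruence, as the sides of a spherical $3$-gon, with the converse obtained by reading these equivalences backwards.

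The main obstacle is therefore this SSS existence-and-uniqueness step, which I would handle constructively: fix one vertex, lay off an edge of length $c$, and then locate the third vertex as the intersection of the small circle of radius $b$ about the first vertex with that of radius $a$ about the second. The delicate point is to verify that this intersection is exactly a single point up to reflection precisely when the (translated) triangle inequalities and perimeter bound hold, and that the resulting $3$-gon lies in an open hemisphere as the convention demands, rather than merely writing down candidate angles from the cosine law; the cosine law $2(b)$ then shows the realized angles are the prescribed ones and depend only on the sides, giving uniqueness. Everything else is bookkeeping with the polar triangle and the supplement identities.
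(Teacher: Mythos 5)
Your proposal is correct, but note that the paper does not prove this proposition at all: it is quoted from Alekseevskij--Vinberg--Solodovnikov \cite[pp.~62, 65]{MR1254932}, so there is no in-paper argument to compare against. What you have written is essentially the standard textbook proof, and it is consistent with the one hint the paper does give, namely the remark immediately after the proposition that the two cosine laws and the angle/side inequalities are exchanged by the polar substitution $A\leftrightarrow\pi-a$, $B\leftrightarrow\pi-b$, $C\leftrightarrow\pi-c$ --- precisely the device you use. Your vector-model computation for 2(b) is right: with $\cos a=\mathbf{v}\cdot\mathbf{w}$ etc., the inner product of the two unit tangent vectors at $\mathbf{u}$ gives $\cos A=(\cos a-\cos b\cos c)/(\sin b\sin c)$. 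Your translation of the five hypotheses of assertion 1 into the SSS conditions is also correct: $A+B+C>\pi$ becomes the perimeter bound $a'+b'+c'<2\pi$ for $a'=\pi-A$, etc., and each inequality of the form $A+B-C<\pi$ becomes a triangle inequality $c'<a'+b'$. Two points deserve explicit care if you write this out in full. First, in defining the polar triangle, ``outward unit normals of the edge-planes'' is not quite the right prescription: the vertex $A'$ must be chosen as the pole of side $a$ lying on the \emph{same} side as the vertex $A$ (i.e., $\pm(\mathbf{v}\times\mathbf{w})/\lVert\mathbf{v}\times\mathbf{w}\rVert$ with the sign making the dot product with $\mathbf{u}$ positive); a uniform ``outward'' choice can produce antipodes and flip signs in the supplement relations. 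You also need biduality (the polar of the polar is the original triangle) to carry uniqueness back through the reduction, and the fact that polarity commutes with orthogonal transformations. Second, the SSS step you defer is genuinely where the inequalities enter: two small circles of radii $b$ and $a$ about centers at distance $c$ meet in exactly two points (mirror images across the great circle through the centers) precisely when the strict triangle inequalities \emph{and} the spherical wrap-around condition $a+b+c<2\pi$ hold --- the latter is why the perimeter bound is needed and has no Euclidean analogue. Since the paper takes congruence to mean arbitrary orthogonal transformation, the two mirror solutions are congruent, which gives uniqueness. Both points are routine to settle, so the plan as a whole is sound.
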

 \emph{Spherical cosine law} is obtained from
the spherical cosine theorem for angles, by exchanging the angles
$A,B,C$ and the sides $a,b,c$ with $A\leftrightarrow \pi -a,
B\leftrightarrow\pi -b, C\leftrightarrow\pi -c$. By this exchange, the
last three inequalities of
Proposition~\ref{prop:vinberg}~\eqref{assert:vinberg} become the distance inequalities for spherical 3-gons.
For every nonzero real number $x$, $\arccot\ x $ is the angle $\theta$
 such that $0<|\theta|<\pi/2$ and $\cot \theta = x$. Let $\csc x$ be
 $1/\sin x$. We say a spherical $4$-gon $Q$ is a \emph{copy} of a
 spherical $4$-gon $Q'$, if $Q$ is an orthogonal transformation of $Q'$.

\section{Combinatorial conditions for spherical monohedral
 quadrangular tilings to be isohedral tilings over pseudo-double wheels}\label{sec:char} 

\begin{definition}\label{def:pdw}
Let $\TRPZ_n$~($n\ge3$) be the set of spherical
tilings by $2n$ congruent, possibly concave quadrangles such
 that (1) the skeleton is a pseudo-double wheel, and (2) the distribution
 of inner angles and that of the edge-length on the skeleton are as in
 Figure~\ref{chart of TRPZ}. 
\end{definition}

Note that we do not assume the isohedrality in Definition~\ref{def:pdw}.
\begin{figure}[ht]
 \centering
\includegraphics[width=8cm]{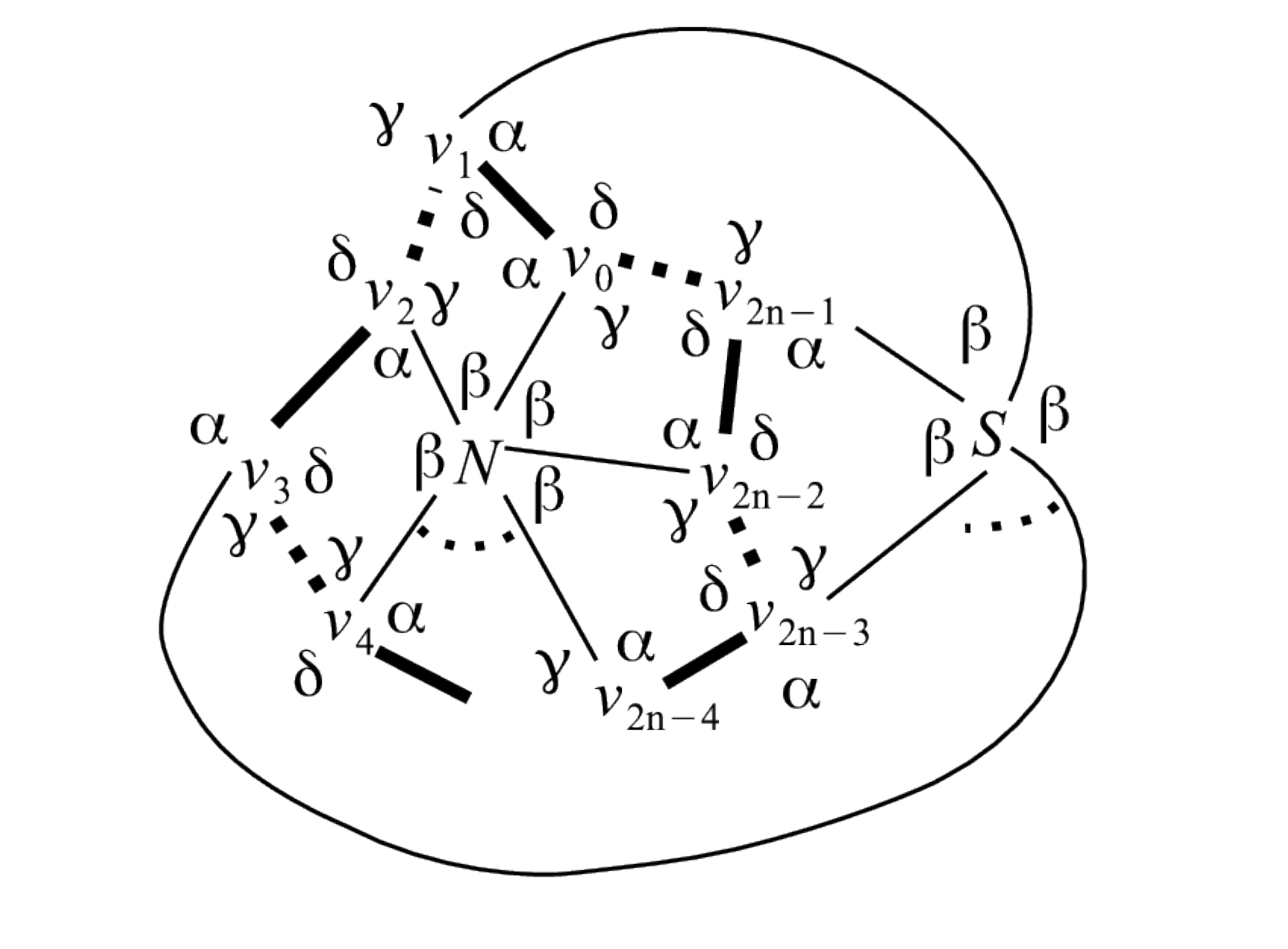}
\caption{
The solid, the thick, and the
 dotted thick edges have length $a, b$ and $c$. 
 The vertices $N$ and $S$ are $n$-valent. Some among $\alpha,\beta,\gamma,\delta$ are equal, and some among
 $a,b,c$ are equal.
 \label{chart of TRPZ}}
\end{figure}

For example, all images of Figure~\ref{summary} and Figure~\ref{p_expansion} are members of $\TRPZ_n$ $(n=3,4,5)$. The leftmost
of Figure~\ref{p_expansion} is so-called \emph{the central projection}
of the cube. They have the vertical $n$-fold axis $\rho$ of rotation, and $n$ horizontal
two-fold axes $\eta_i$~$(i=0,1,\ldots,n-1)$ of rotation such that for
each $i$,
$\eta_i$ is through the midpoint of an edge and $\eta_i\perp\rho$. By
these symmetry operations, any tile is transformed to any tile in each
tiling. So, they are isohedral.

The two vertices $N$ and $S$ of any tiling $\T$ presented in
Figure~\ref{chart of TRPZ} can be identified with the north pole and the
south pole of the unit sphere $\Sset^2$ respectively, as there are
two congruent paths from $N$ to $S$.  For each point $V~(\ne N,S)$ on
$\Sset^2$, the \emph{longitude} of $V$ is the angle $\psi\in [-\pi, \
\pi)$ from the edge $N v_0$ to a geodesic segment $N V$, measured in the
direction indicated in Figure~\ref{fig:cs}.

\begin{proposition}[\protect{\cite[Lemma~5]{akama12:_class_of_spher_tilin_by_i}}]\label{prop:aux}
Given a spherical tiling by congruent quadrangles such that the
quadrangles are as in Figure~\ref{fig:4gon} with the edge-length $c$
being the edge-length $a$.  Suppose that
$(1)$ there is a vertex incident to only three edges of length $a$, and
$(2)$ there is a $3$-valent vertex incident to two edges
      of length $a$ and to one edge of length $b$.
Then for the inner angles of the tile, we have  $\alpha\ne\delta$ and $\beta\ne\gamma$. 
\end{proposition}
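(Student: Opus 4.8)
The plan is to argue by contradiction, showing that the two combinatorial hypotheses are incompatible with the tile possessing a mirror symmetry, and that either $\alpha=\delta$ or $\beta=\gamma$ would force exactly such a symmetry. Label the quadrangle cyclically as $B,A,D,C$, so that (after replacing the length $c$ by $a$ as hypothesized) the edges are $BA=a$, $AD=b$, $DC=a$, $CB=a$, with inner angles $\beta,\alpha,\delta,\gamma$ at $B,A,D,C$ as in Figure~\ref{fig:4gon}; thus $\beta$ and $\gamma$ are the two corners flanked by two edges of length $a$, while $\alpha$ and $\delta$ are each flanked by one $a$-edge and the $b$-edge. The first step is to observe that $\alpha=\delta$ and $\beta=\gamma$ are equivalent: reflecting across the perpendicular bisector of the $b$-edge $AD$ interchanges $A$ and $D$, and if $\alpha=\delta$ then, since $BA=DC=a$, it carries $B$ to $C$, so the tile is mirror-symmetric and $\beta=\gamma$; reflecting across the perpendicular bisector of the opposite $a$-edge $CB$ gives the converse. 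Hence it suffices to rule out the symmetric case, and I assume for contradiction that $\beta=\gamma$ (equivalently $\alpha=\delta$).

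Next I read off two angle-sum equations from the hypotheses. Around the $3$-valent vertex of hypothesis~(1), all three incident edges have length $a$, so each of the three tile-corners meeting there is flanked by two $a$-edges and therefore equals $\beta$ or $\gamma$; since the angles around a vertex of a tiling sum to $2\pi$ and $\beta=\gamma$, this gives $3\beta=2\pi$, i.e. $\beta=\gamma=2\pi/3$. Around the $3$-valent vertex of hypothesis~(2), whose incident edges have lengths $a,a,b$, exactly one corner is flanked by two $a$-edges (so it equals $\beta$) while the other two are each flanked by an $a$-edge and the $b$-edge (so each equals $\alpha=\delta$); the angle sum now reads $\beta+2\alpha=2\pi$, whence $\alpha=\delta=2\pi/3$. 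Thus all four inner angles equal $2\pi/3$, and in particular the tile is convex.

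The third step forces the tile to be equilateral. Draw the diagonal $AC$. The triangle $ABC$ has $AB=CB=a$, so it is isosceles and $\angle BAC=\angle BCA$. Since the diagonal splits the convex corners additively, $\alpha=\angle BAC+\angle CAD$ and $\gamma=\angle BCA+\angle ACD$; subtracting and using $\alpha=\gamma$ gives $\angle CAD=\angle ACD$. Hence the triangle $ACD$ has equal angles at $A$ and $C$; by the uniqueness part of Proposition~\ref{prop:vinberg} such a triangle is isosceles, so its opposite sides are equal, $AD=DC$, i.e. $b=a$. Together with the standing hypothesis $c=a$, this makes the tile an equilateral spherical quadrangle with all angles $2\pi/3$; three of them meet at every vertex, so the tiling can only be the central projection of the cube. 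But then every edge has the same length, so no edge has a length $b$ distinct from $a$, contradicting hypothesis~(2), which posits a vertex incident to an edge of length $b$ alongside two edges of length $a$. This contradiction shows the symmetric case cannot occur, and therefore $\alpha\ne\delta$ and $\beta\ne\gamma$.

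I expect the main obstacle to be the first step, the equivalence $\alpha=\delta\Leftrightarrow\beta=\gamma$, once the tile is allowed to be concave and the angles are the oriented angles of Section~\ref{sec:basic definitions}. There the reflection reverses orientation, and one must check that the image of $B$ lands on the same side of $AD$ as $C$, so that the two points are genuinely identified; for a convex tile this is immediate, but for a concave tile it requires comparing the oriented angle of the reflected edge at $D$ against $\angle ADC$ carefully, or else replacing the geometric reflection by the corresponding computation with the cosine laws of Proposition~\ref{prop:vinberg}. A secondary point to make airtight is the final step: hypotheses~(1) and~(2) are only meaningful as \emph{distinct} constraints when $a\ne b$, so that deriving $b=a$ is genuinely contradictory; this is precisely where the two hypotheses must be used together rather than separately, since neither alone pins down all four angles.
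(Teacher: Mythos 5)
The paper itself never proves Proposition~\ref{prop:aux}: it is imported verbatim from \cite[Lemma~5]{akama12:_class_of_spher_tilin_by_i}, so there is no internal proof to measure you against, and your argument has to stand on its own. On its own terms it is essentially correct. Once $a\ne b$ is granted, every corner flanked by two $a$-edges is $\beta$ or $\gamma$ and every corner flanked by an $a$-edge and the $b$-edge is $\alpha$ or $\delta$; under your symmetry assumption the two hypothesized vertices give $3\beta=2\pi$ and $\beta+2\alpha=2\pi$, all four angles become $2\pi/3$, the tile is convex, and your diagonal argument (isosceles triangles via the uniqueness in Proposition~\ref{prop:vinberg}~\eqref{assert:vinberg}) forces $b=a$, the contradiction. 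Your closing caveat that $a\ne b$ must be implicit is not pedantry but necessary: with $a=b$ the central projection of the cube satisfies hypotheses $(1)$ and $(2)$ while $\alpha=\delta=\beta=\gamma=2\pi/3$, so the proposition is false under the literal reading; the implicit reading is exactly how the paper applies it in Theorem~\ref{thm:cube}, where the tile types are $aaab$ and $aabc$ with distinct letters denoting distinct lengths.

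The one real debt is the step you flag yourself: the equivalence $\alpha=\delta\iff\beta=\gamma$ for possibly concave tiles, and it is genuinely load-bearing, since without it the vertex of hypothesis $(2)$ could a priori read $\beta+\alpha+\delta=2\pi$, and the pure angle count then stalls (it is consistent with $\beta=\gamma=2\pi/3$, $\alpha+\delta=4\pi/3$, $\alpha\ne\delta$, and a six-tile tiling). Fortunately the check you postpone does succeed, so this is a fixable omission rather than a wrong turn. Let $\ell$ be the great circle through $A$ and $D$ and $\sigma$ the reflection in the perpendicular bisector of $AD$; since that bisector is perpendicular to $\ell$, $\sigma$ preserves $\ell$ and each open hemisphere it bounds. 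A geodesic ray issuing from $A$ stays strictly on one side of $\ell$ until distance $\pi$, so $B$ (at distance $a<\pi$) lies on the interior side of $\ell$ --- the side the tile occupies along the open edge $AD$ --- precisely when $\alpha<\pi$, and its unoriented angle with the ray $AD$ is $\alpha$ or $2\pi-\alpha$ accordingly; likewise for $C$ with $\delta$. Hence $\alpha=\delta$ places $\sigma(B)$ and $C$ on the same side of $\ell$, at distance $a$ from $D$, at equal unoriented angle with the ray $DA$, so $\sigma(B)=C$ and symmetrically $\sigma(C)=B$; then $\sigma(\partial T)=\partial T$, and since $T$ lies in an open hemisphere (area $<2\pi$) we get $\sigma(T)=T$ and $\beta=\gamma$. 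The converse direction, reflecting in the bisector of $CB$, is where the standing hypothesis $c=a$ enters (it needs $CD=BA$). With this paragraph-length repair inserted your proof is complete; as submitted, Step~1 is a stated-but-unproved lemma in the concave case.
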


\begin{theorem}\label{thm:cube}
  Let $\T$ be a spherical tiling by six congruent quadrangles.
 \begin{enumerate}
  \item \label{assert3:Pn}
	$\T\in\TRPZ_3$.
	
  \item \label{assert3:dn} $\T$ has a three-fold
	axis $\rho$ of rotation and three two-fold axes
	$\eta_1,\eta_2,\eta_3$ of rotation
	perpendicular to $\rho$.

  \item \label{assert3:isohedral} $\T$ is
	isohedral.
 \end{enumerate}
\end{theorem}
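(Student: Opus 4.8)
The plan is to establish the three assertions in the order \eqref{assert3:Pn}$\Rightarrow$\eqref{assert3:dn}$\Rightarrow$\eqref{assert3:isohedral}, the membership $\T\in\TRPZ_3$ carrying essentially all of the combinatorial weight. First I would invoke the equilateral-edge fact of~\cite{agaoka:quad} to fix a pair of adjacent edges of the common tile to be of equal length; naming this length $a$ and the angle between the two edges $\beta$, the tile is the one of Figure~\ref{fig:4gon}, with cyclic side sequence $a,a,b,c$ and angle sequence $\beta,\alpha,\delta,\gamma$. Euler's formula for a quadrangulation of $\Sset^2$ with $F=6$ faces gives $E=12$ and $V=8$. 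Since $6$ is the least possible number of faces of such a tiling, the generation theorem of~\cite{MR2186681}---every $4$-gonal spherical skeleton arises from a pseudo-double wheel by face-increasing local expansions---leaves no room for an expansion, so the skeleton is forced to be the (unique) pseudo-double wheel of six faces, i.e.\ the skeleton of the cube.

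The substantive part of \eqref{assert3:Pn} is to show that the distribution of the edge-lengths and angles over this skeleton is the one of Figure~\ref{chart of TRPZ}. Counting edge-slots over the six tiles gives twelve $a$-slots, six $b$-slots, and six $c$-slots, so when $a,b,c$ are pairwise distinct the skeleton carries six $a$-edges, three $b$-edges, and three $c$-edges, exactly the radial/equatorial partition of the pseudo-double wheel. I would then argue that the two $a$-edges of each tile must be its radial edges, meeting at $N$ or at $S$: matching edge-lengths across each shared edge, together with the angle-sum $2\pi$ at every $3$-valent vertex, pins down which corner of each tile sits at $N$ and at $S$, while Proposition~\ref{prop:aux}, which yields $\alpha\neq\delta$ and $\beta\neq\gamma$, discards the spurious gluings in which a tile would meet a reflected copy inconsistently. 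The coincidence cases (some of $a,b,c$ equal) and the concave case have to be run separately; this branching, rather than any single computation, is where I expect the real difficulty to lie. Completing it gives $\T\in\TRPZ_3$.

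For \eqref{assert3:dn} I would read the symmetries off the finished labelling. The three faces around the $3$-valent vertex $N$ each place their $\beta$-corner there, forcing $3\beta=2\pi$, so the three tiles occupy equal angular sectors about the $N$--$S$ axis and the rotation $\rho$ of order three about that axis maps $\T$ to itself. Each equatorial edge is shared by an $N$-tile and an $S$-tile which are congruent, and the half-turn $\eta_i$ about the axis through that edge's midpoint, perpendicular to $\rho$, interchanges them; ranging over the three equatorial edges of one type produces the three two-fold axes $\eta_1,\eta_2,\eta_3$.

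Assertion \eqref{assert3:isohedral} is then immediate: $\rho$ permutes the three $N$-tiles cyclically and likewise the three $S$-tiles, while each $\eta_i$ exchanges an $N$-tile with an $S$-tile, so $\langle\rho,\eta_1,\eta_2,\eta_3\rangle$ acts transitively on the six tiles and $\T$ is isohedral. The one point I would take care over is that $\rho$ and the $\eta_i$ are genuine isometries of $\Sset^2$, not merely combinatorial automorphisms; here I expect to invoke the uniqueness clause of Proposition~\ref{prop:vinberg}\eqref{assert:vinberg}, so that congruent tiles meeting along equal edges with equal angles are forced into the symmetric position.
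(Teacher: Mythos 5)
There is a genuine gap, and it sits exactly where you said you ``expect the real difficulty to lie'': you never carry out the coincidence cases, and those constitute nearly the whole of the paper's proof of assertion~\eqref{assert3:Pn}. By \cite{agaoka:quad} the cyclic list of edge-lengths of the tile is one of $aaaa$, $aabb$, $aaab$, $aabc$; your proposal fixes the generic list $aabc$ and defers the rest. In the paper, the cases $aaaa$ and $aabb$ are not amenable to your tools at all: the tile is then a rhombus, kite or dart, it \emph{has} a mirror symmetry, Proposition~\ref{prop:aux} is silent, and the paper instead invokes the classification of spherical monohedral (kite/dart/rhombus)-faced tilings \cite{sakano15:anisohedral}. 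The case $aaab$ is the bulk of the proof: the three $b$-edges form a perfect face-matching of the cube skeleton, \cite[Theorem~8]{behmaram13:_upper_pfaff} gives exactly eight such matchings, all equivalent under skeleton automorphisms (so one may fix the one in Figure~\ref{fig:pfm}), and then a vertex-by-vertex angle chase over the number $k\in\{0,1,2,3\}$ of angles $\beta$ at an all-$a$ vertex eliminates $k=1,2$ by contradictions with Proposition~\ref{prop:aux} and reduces $k=0$ to $k=3$ by the symmetry $(\alpha,\beta,\gamma,\delta)\leftrightarrow(\delta,\gamma,\beta,\alpha)$. None of this is replaced by anything in your sketch, so $\T\in\TRPZ_3$ is not established. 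There is also a misapplication: Proposition~\ref{prop:aux} hypothesizes that the edge-length $c$ equals $a$ (an $aaab$-type tile) together with specific incidence conditions, so it is unavailable in your $aabc$ case with $a,b,c$ pairwise distinct --- though there it is also unnecessary, since once the $b$-matching is fixed, the fact that each face has exactly one $c$-edge and that $c$ is adjacent to $b$ in each tile already forces the distribution of Figure~\ref{chart of TRPZ}, with no angle argument.

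Your treatment of \eqref{assert3:dn} and \eqref{assert3:isohedral} is essentially sound and close to the paper's, with one caveat: you speak of ``the $N$--$S$ axis'' and of half-turn axes ``perpendicular to $\rho$'' before antipodality is known. The paper secures this first, showing the two all-$\beta$ vertices are antipodal because three congruent paths join them, and likewise that the midpoint of a $b$-edge is antipodal to the midpoint of a non-adjacent $c$-edge via two congruent paths; only then do the axes exist and the transitivity argument in \eqref{assert3:isohedral} --- rotations about $\rho$ within each polar triple of tiles, some $\eta_i$ to cross between the triples --- goes through as you describe. Your idea of using the uniqueness clause of Proposition~\ref{prop:vinberg}~\eqref{assert:vinberg} to upgrade combinatorial coincidence to genuine isometry is a reasonable substitute for the paper's congruent-paths device, but it does not repair the missing case analysis in \eqref{assert3:Pn}, on which everything downstream depends.
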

 \begin{proof} \eqref{assert3:Pn}. Sakano proved this assertion by case analysis
  \cite{sakano10:_towar_class_of_spher_tilin}. We improve the
 presentation of his proof, by using Proposition~\ref{prop:aux},
 \cite{MR2186681}, and \cite[Theorem~8]{behmaram13:_upper_pfaff}.  By
 Euler's theorem, every spherical tiling by 4-gons has a 3-valent
 vertex~\cite{agaoka:quad}. From this, we can prove that the skeleton of
 any spherical tiling by six 4-gons is the skeleton of the
 cube~(By the enumeration of spherical
 quadrangulations~\cite{MR2186681}, there is only one spherical
 quadrangulation of eight vertices).  Moreover, the cyclic list of
 edge-lengths of the tile of a spherical tiling by congruent $4$-gons is
 either $aaaa$, $aabb$, $aaab$, or $aabc$ where $a,b,c$ are mutually
 distinct~(\cite{agaoka:quad}). When the cyclic list of edge-lengths of
 the tile of $\T$ is $a a a a$ or $a a b b$, then $\T\in\TRPZ_3$, by
 \cite{sakano15:anisohedral}.

 Let the edge-lengths of the tile be $aaab$~$(a\ne b)$. Then the
 spherical tiling by six congruent $4$-gons induces a perfect
 face-matching consisting of three edges of length $b$. By a \emph{perfect
 face-matching} of a graph, we mean a perfect matching~\cite[p.~2]{Deza} of the dual
 graph.  By
 \cite[Theorem~8]{behmaram13:_upper_pfaff}, the skeleton of the cube has
 eight perfect face-matchings.
 
 \def\pfm#1#2#3#4#5#6{ \begin{scope}[scale=#1,shift={(#2,#6)}]
 \coordinate (zero) at (135:1); \coordinate (one) at (135:2);
 \coordinate (two) at (45:2); \coordinate (three) at (45:1); \coordinate
 (four) at (-45:1); \coordinate (five) at (225:1); \coordinate (six) at
 (225:2); \coordinate (seven) at (-45:2);

     \draw (zero) -- (one) -- (two) -- (three) -- cycle;
     \draw (zero) -- (five) -- (six) -- (one) -- cycle;
     \draw (zero) -- (three) -- (four) -- (five) -- cycle;
     \draw (seven) -- (four) -- (three) -- (two) -- cycle;
     \draw (seven) -- (six) -- (five) -- (four) -- cycle;
     \draw (seven) -- (six) -- (one) -- (two) -- cycle;

     \draw[ultra thick] #3 ;
     \draw[ultra thick] #4 ;
     \draw[ultra thick] #5 ;
		     \end{scope}}
 \def\vnumbering{
     \node at (135:.7) {\tiny $0$};
     \node at (135:2.3) {\tiny $1$};
     \node at (45:2.3) {\tiny $2$};
     \node at (45:.7) {\tiny $3$};
     \node at (-45:.7) {\tiny $4$};
     \node at (225:.7) {\tiny $5$};
     \node at (225:2.3) {\tiny $6$};
     \node at (-45:2.3) {\tiny $7$};
}
 
 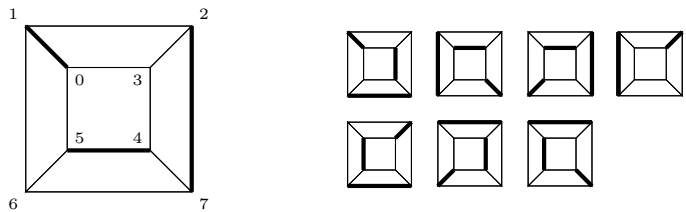
\begin{figure}\centering
   \begin{tikzpicture}[scale=.6]
   \pfm{1.3}{0cm}{(zero)--(one)}{(two)--(seven)}{(four)--(five); \vnumbering}{0cm}
   \pfm{0.5}{12cm}{(zero)--(one)}{(three)--(four)}{(six)--(seven)}{2cm}
   \pfm{0.5}{16cm}{(zero)--(three)}{(one)--(six)}{(four)--(seven)}{2cm}
    \pfm{0.5}{20cm}{(zero)--(three)}{(five)--(six)}{(two)--(seven)}{2cm}
    \pfm{0.5}{24cm}{(one)--(six)}{(two)--(three)}{(one)--(six)}{2cm}
   \pfm{0.5}{12cm}{(zero)--(five)}{(two)--(three)}{(six)--(seven)}{-2cm}
   \pfm{0.5}{16cm}{(one)--(two)}{(three)--(four)}{(five)--(six)}{-2cm}
   \pfm{0.5}{20cm}{(zero)--(five)}{(one)--(two)}{(four)--(seven)}{-2cm}
   \end{tikzpicture}
  \caption{The eight perfect face-matchings of the skeleton of the cube.\label{fig:pfm}}
 \end{figure}
       In Figure~\ref{fig:pfm}, the first perfect face-matching is
 transformed to the other seven perfect face-matchings by seven
 automorphisms of the skeleton of the cube. Enumerate the vertices
 $v_i$~($0\le i\le 7$) of the cube, as in the figure of the first
 perfect face-matching.  The seven automorphisms are represented as
 seven permutations $(2\,6)(3\,5)$, $(0\,4)(1\,7)(2\,6)(3\,5)$,
 $(0\,5)(1\,6)(2\,7)(3\,4)$, $(0\,3)(1\,2)(4\,5)(6\,7)$,
 $(0\,3\,4\,5)(1\,2\,7\,6)$, $(0\,5\,4\,3)(1\,6\,7\,2)$, and $(0\,
 4)(1\,7)$.  So we have only to consider the first perfect
 face-matching. Every inner angle around the vertex $v_3$ or the vertex
 $v_6$ is $\beta$ or $\gamma$, by Figure~\ref{fig:4gon}, because $v_3$ and $v_6$ are incident to
 only edges of length $a$. The number  of inner angles $\beta$ around
 $v_3$, say $k$, is the number  of inner angles $\beta$ around $v_6$. Otherwise, $\beta=\gamma$, which contradicts
 against Proposition~\ref{prop:aux}. 
 
 We will prove $k\ne2$. Suppose $k=2$. Without loss of
 generality, $\angle v_1 v_6 v_7=\gamma$, $\angle v_7 v_6 v_5 = \angle
 v_5 v_6 v_1=\beta$, because the automorphism $(1\ 5\ 7)(0\ 4\ 2)$ of
 the skeleton of the cube fixes
 the  face-matching edges $v_0 v_1, v_4 v_5$ and $v_2 v_7$.
 Then $\angle v_0 v_5 v_6=\gamma$, $\angle v_6 v_5 v_4 = \alpha$ and $\angle v_5 v_4 v_7=\delta$. Here $\angle v_4 v_5 v_0=\alpha$ or $\delta$. Assume $\angle v_4 v_5
 v_0=\alpha$. Then an opposite inner angle $\angle v_0 v_3 v_4$ is
 $\gamma$. Hence $\angle v_4 v_3 v_2=\beta$. So the inner angle $\angle
 v_7 v_4 v_3$ is $\gamma$. 
The three inner angles around $v_4$ are $\gamma,\delta,X$ for some
 $X\in\{\alpha,\delta\}$,  while the
 three inner angles around $v_5$ are $\alpha,\alpha,\gamma$. 
 So $\alpha=\delta$. This contradicts against
 Proposition~\ref{prop:aux}.  Hence $\angle v_4 v_5 v_0=\delta$.
 Then $\angle
 v_3 v_4 v_5 = \alpha$. Here $\angle v_7
 v_4 v_3 =\beta$ or $\gamma$.  Assume $\angle v_7 v_4 v_3 =\beta$. Then
 the three inner angles around $v_4$ are $\alpha,\beta,\delta$ and those
 around $v_5$ are $\alpha,\gamma,\delta$. So $\beta=\gamma$.  This
 contradicts against Proposition~\ref{prop:aux}. Thus $\angle v_7 v_4
 v_3 =\gamma$.  Hence $\angle v_4 v_3 v_2=\beta$.  $\angle v_2 v_3
 v_0=\gamma$, because the three inner angles around $v_3$ are
 $\beta,\beta,\gamma$. Thus $\angle v_3 v_0 v_1=\delta$. As $\angle v_5
 v_6 v_1=\beta$, an opposite inner angle $\angle v_1 v_0 v_5$ is
 $\delta$. Hence the three inner angles around $v_0$ are
 $\gamma,\delta,\delta$. On the other hand, those around $v_4$ are
 $\alpha,\gamma,\delta$. So $\alpha=\delta$. This contradicts against
 Proposition~\ref{prop:aux}.  Thus, the number $k$ of $\beta$
 around $v_3$ is not two. 
 
 In a similar argument, $k\ne1$.  If $k=3$, then
 $\T\in \TRPZ_3$. Otherwise, $k=0$. But, because the cyclic of the tile
 is $a a a b$~($a\ne b$), we have the symmetry
 $(\alpha,\beta,\gamma,\delta)\leftrightarrow(\delta,\gamma,\beta,\alpha)$. Hence,
 we have $\T\in\TRPZ_3$, too.

Suppose the cyclic list of the edge-lengths of the tile is
 $a a b c$ with $a,b,c$ are mutually distinct. The distribution of the edges
 of length $b$ is the first perfect face-matching of
 Figure~\ref{fig:pfm} without loss of generality. Since every edge of
 length $c$ should be adjacent to an edge of length $b$ and each face
 has exactly one edge of length $c$,  the tiling is
 Figure~\ref{chart of TRPZ} with $n=3$. So, $\T\in \TRPZ_3$.

       \medskip \eqref{assert3:dn}.  In $\T$, two vertices consisting of
  three inner angles $\beta$ are antipodal to each other, because there
  are three congruent paths between them: ``travel straight $a$, bend in
  $\gamma$ angle, travel straight $c$, bend in $-\gamma$ angle, and travel
  straight $a$.''  Actually there is a 3-fold axis $\rho$ of rotation
  through the two vertices, by examining the distribution of $\alpha,\beta,\gamma$
  and the edge-lengths $a,b,c$. $\rho$ is the black vertical axis in
  Figure~\ref{p_expansion}~(left).  Moreover the midpoint of an edge $e$
  of length $b$ is antipodal to the midpoint of the edge $e'$ of length
  $c$ where $e$ is not adjacent to $e'$. It is because there are two
  congruent paths between them: one is ``travel straight $b/2$, bend in
  $\delta$ angle, travel straight $a$, bend in $-\alpha$ angle, travel
  straight $a$, bend in $\beta$ angle, travel straight $c/2$.''  The
  other path is the same with the three angles inverted. Actually an
  axis through the two midpoints is a 2-fold axis of rotation by
  examining the
  distribution of $\alpha,\beta,\gamma,a,b,c$. Similarly we can find
  three $2$-fold axes $\eta_1,\eta_2,\eta_3$ of rotation. Each $\eta_i$
  is a white horizontal axis in Figure~\ref{p_expansion}~(left).

       \medskip
       \eqref{assert3:isohedral}.
       Let $T$ and $T'$ be tiles of $\T$. Let $\rho$ be the vertical 3-fold axis of
       rotation and $\eta_i$~$(i=1,2,3)$ be the horizontal 2-fold axes of rotation, given in \eqref{assert3:dn}. If $\rho$
       is through a point of $T\cap
       T'$, then $T$ is transformed to $T'$ by a rotation around the
       3-fold axis $\rho$. Otherwise, if $T$ and $T'$ are adjacent, then
       $T$ is transformed to $T'$ by some 2-fold axis $\eta_i$ that is
       through an edge $T\cap T'$ of length $b$ or $c$. By repeating
       these transformations, any tile $T$ is transformed to any other
       tile $T'$. So $\T$ is isohedral. This completes the proof of Theorem~\ref{thm:cube}.\qed
     \end{proof}

For spherical tilings by congruent quadrangles, Theorem~\ref{thm:pn} below
provides two necessary and sufficient conditions for spherical tilings such that
the skeletons are pseudo-double wheels to be isohedral. The
two conditions are somehow combinatorial, and come from those given in
Theorem~\ref{thm:cube}.  One is being $\TRPZ_n$, and the other is a condition on
the symmetry operations of tilings.

As in \cite{sakano15:anisohedral}, a \emph{kite}~(\emph{dart}, resp.) is
a convex~(non-convex, resp.)  quadrangle such that the cyclic list of
edge-lengths is $a a b b$ ($a\ne b$), and a \emph{rhombus} is a
quadrangle such that all the edges are equilateral. A kite, a dart and a
rhombus enjoy a mirror symmetry.
    \begin{lemma}\label{lem:no mirror}
     Let $\T$ be a spherical tiling by congruent polygons such that any
     edge is incident to an odd-valent vertex. If the tile does not have
     a mirror symmetry, then neither does $\T$.
   \end{lemma}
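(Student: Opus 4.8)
The plan is to argue by contradiction: assume $\T$ admits a mirror symmetry and manufacture from it a mirror symmetry of the tile. So let $R$ be a mirror symmetry of $\T$, that is, a reflection of $\Sset^2$ in a great circle $C$ that maps $\T$ onto itself; in particular $R$ is an involution fixing $C$ pointwise and permuting the tiles, edges and vertices of $\T$.

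First I would dispose of the case where $C$ meets the interior of a tile. If some $p\in C$ lies in the interior of a tile $T$, then $R(p)=p$ lies in both $\mathrm{int}(T)$ and $\mathrm{int}(R(T))$; since distinct tiles have disjoint interiors, this forces $R(T)=T$, and $R|_T$ is then a reflection carrying $T$ onto itself, i.e.\ a mirror symmetry of $T$. As all tiles are congruent, the tile would then have a mirror symmetry, contrary to hypothesis. Hence $C$ avoids every tile interior, so $C$ lies in the $1$-skeleton of $\T$. Because $C$ is a great circle while the $1$-skeleton is a finite union of geodesic edges together with finitely many vertices, and since two distinct great-circle arcs meet in only finitely many points, $C$ cannot be covered by edges that are not arcs of $C$; therefore $C$ is covered by those edges of $\T$ that lie on $C$, and in particular at least one edge $e$ lies on $C$, with both endpoints vertices on $C$.

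Next I would compute the valence of an arbitrary vertex $v\in C$. Since $C$ is geodesic and is covered by edges, it does not ``turn'' at $v$: exactly two edges of $\T$ lie along $C$ at $v$, one on each side, and both are fixed by $R$. Every other edge at $v$ has an outward direction off the mirror line, so $R$ carries it to a \emph{distinct} edge at $v$ reflected across $C$; hence the remaining edges at $v$ split into pairs. Thus every vertex on $C$ has even valence. In particular the two endpoints of the edge $e$ are even-valent, so $e$ is incident to no odd-valent vertex, contradicting the standing hypothesis that every edge is incident to an odd-valent vertex. This contradiction shows that $\T$ has no mirror symmetry.

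The main obstacle is the parity bookkeeping of the last paragraph: one must verify carefully that exactly two edges lie along $C$ at each vertex $v\in C$ (using that $C$ is contained in the $1$-skeleton and is locally geodesic, so it passes straight through $v$) and that no edge leaving $C$ at $v$ can be fixed by $R$ (its outward direction is off the mirror, so its image is a genuinely different edge). Once these two points are secured, the even-valence conclusion—and hence the contradiction with the odd-valence hypothesis—follows immediately.
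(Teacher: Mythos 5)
Your proposal is correct and follows essentially the same route as the paper's proof: the mirror cannot cross a tile's interior (else the tile would inherit a mirror symmetry), so the mirror circle lies along a cycle of edges, every vertex on that cycle is even-valent by the reflection pairing, and this contradicts the hypothesis that every edge is incident to an odd-valent vertex. You merely fill in details (finiteness of intersections with off-circle edges, the pairing of edges leaving the mirror at a vertex) that the paper leaves implicit.
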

   \begin{proof} Assume $\T$ has a mirror plane $\sigma$. Then $\sigma$ does not intersect
   transversely with a tile, since the tile does not have a mirror symmetry.  Thus the intersection of $\sigma$ and $\T$ is the
   cycle of the edges, because each edge of $\T$ is straight. By $\sigma$, each vertex on the
   cycle has even degree. But all the edges of the tiling $\T$ is
   incident to an odd-valent vertex. This is a contradiction. This
    completes the proof of
   Lemma~\ref{lem:no mirror}.\qed\end{proof}
   
 \begin{theorem}\label{thm:pn}
   For 
any spherical tiling  $\T$ by $2n$
  congruent quadrangles~$(n\ge4)$, the following three conditions are
	 equivalent:
	 \begin{enumerate}
	  \item \label{assert:Pn}	$\T\in\TRPZ_n$.
	  \item \label{assert:dn} $\T$ has an $n$-fold axis $\rho$ of
		rotation and $n$ two-fold axes of rotation perpendicular
		to $\rho$.
	  \item \label{assert:isohedral} $\T$ is isohedral
	and the skeleton is the pseudo-double wheel of $2n$ faces.
	 \end{enumerate}
 \end{theorem}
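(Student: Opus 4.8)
The plan is to establish the cyclic chain of implications \eqref{assert:Pn}${}\Rightarrow{}$\eqref{assert:dn}${}\Rightarrow{}$\eqref{assert:isohedral}${}\Rightarrow{}$\eqref{assert:Pn}, upgrading the geometric arguments of Theorem~\ref{thm:cube}\eqref{assert3:dn},\eqref{assert3:isohedral} from $n=3$ to arbitrary $n\ge4$. Throughout I would exploit a \emph{rigidity principle}: a combinatorial self-map of a spherical tiling by congruent quadrangles that preserves every edge-length and every inner angle is induced by a unique isometry in $SO(3)$, so that realizing a symmetry geometrically and respecting the length/angle labelling of the skeleton are interchangeable.

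For \eqref{assert:Pn}${}\Rightarrow{}$\eqref{assert:dn}, assume $\T\in\TRPZ_n$, so the skeleton is the pseudo-double wheel of $2n$ faces and the labelling is that of Figure~\ref{chart of TRPZ}. I would reproduce the antipodality argument of Theorem~\ref{thm:cube}\eqref{assert3:dn}: the two $n$-valent vertices $N$ and $S$ are joined by $n$ mutually congruent edge-paths (``travel $a$, turn $\gamma$, travel $c$, turn $-\gamma$, travel $a$'' and its rotations), so $N$ and $S$ are antipodal and the rotation by $2\pi/n$ permuting these $n$ congruent sectors is a symmetry, giving the $n$-fold axis $\rho=NS$. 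As in the case $n=3$, the midpoint of each $b$-edge is antipodal to the midpoint of a non-adjacent $c$-edge via two congruent paths that differ only by inversion of the turning angles; the great circle through such an antipodal pair of midpoints is a two-fold axis $\eta_i\perp\rho$, and there are exactly $n$ of them. No new difficulty arises here.

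For \eqref{assert:dn}${}\Rightarrow{}$\eqref{assert:isohedral}, the axes $\rho$ and $\eta_1,\dots,\eta_n$ generate a chiral dihedral group $G\cong D_n$ of order $2n$ of symmetries of $\T$, and I first locate the poles of $\rho$. An edge-midpoint cannot be a pole of an $n$-fold axis for $n\ge3$, and a face-centre can be one only if the quadrangular face carries $n$-fold rotational symmetry, which for a $4$-gon requires $n\le4$; hence for $n\ge5$ both poles are vertices, and I would dispose of the residual $n=4$ face-centre possibility separately (a $4$-fold symmetric polar face together with the $n$ perpendicular two-fold axes over-determines the equatorial band). With the poles at two vertices $\rho$ fixes no tile, so the $\langle\rho\rangle$-orbits partition the $2n$ tiles into a northern and a southern ring of $n$ tiles each, and any $\eta_i$ interchanges the two rings; thus $G$ acts transitively (indeed simply transitively in the generic case, the tile carrying no nontrivial rotational symmetry), so $\T$ is isohedral. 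Reading off the incidences from this ring structure --- two $n$-valent polar vertices, each surrounded by a ring of $n$ faces, joined along an equatorial cycle --- identifies the skeleton as the pseudo-double wheel of $2n$ faces.

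For \eqref{assert:isohedral}${}\Rightarrow{}$\eqref{assert:Pn}, assume $\T$ is isohedral with pseudo-double-wheel skeleton $G$. Since $n\ge4$, the vertices $N$ and $S$ are the \emph{only} vertices of degree $n>3$, so every symmetry of $\T$ preserves $\{N,S\}$ and the two rings of faces about them; transitivity then forces a symmetry exchanging the rings, hence swapping $N\leftrightarrow S$. Invoking Agaoka's classification that the cyclic edge-list of the tile is one of $aaaa$, $aabb$, $aaab$, $aabc$, together with the ring structure and the rigidity principle, I would determine the edge-length and angle distribution orbit by orbit and match it against Figure~\ref{chart of TRPZ}; the non-degeneracy inputs Proposition~\ref{prop:aux} (giving $\alpha\ne\delta$ and $\beta\ne\gamma$) and Lemma~\ref{lem:no mirror} (applicable since every edge of $G$ meets a $3$-valent vertex) rule out the labellings that are combinatorially admissible but geometrically collapse the quadrangle. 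This last implication is the main obstacle: assigning $\alpha,\beta,\gamma,\delta$ and the lengths $a,b,c$ consistently around both rings, and excluding the degenerate assignments, is the analogue of the long ``$k\ne1,2$'' case analysis in the proof of Theorem~\ref{thm:cube}, though the canonical role of $N$ and $S$ for $n\ge4$ makes the bookkeeping more rigid, and hence more tractable, than in the cube case.
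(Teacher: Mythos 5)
Your chain \eqref{assert:Pn}$\Rightarrow$\eqref{assert:dn}$\Rightarrow$\eqref{assert:isohedral}$\Rightarrow$\eqref{assert:Pn} is a legitimate decomposition, and your first leg matches the paper. The genuine gap is in \eqref{assert:dn}$\Rightarrow$\eqref{assert:isohedral}, exactly where you claim ``no new difficulty.'' You assert that once the poles of $\rho$ are vertices, ``the $\langle\rho\rangle$-orbits partition the $2n$ tiles into a northern and a southern ring of $n$ tiles each,'' and you read the pseudo-double wheel off this ring structure. Condition \eqref{assert:dn} gives none of this for free: a priori a polar vertex could be $kn$-valent with $k\ge2$, and there could be tiles meeting neither pole, so that the equatorial band is more than one tile thick. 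This is precisely where the paper spends its effort in its direction \eqref{assert:dn}$\Rightarrow$\eqref{assert:Pn}: it first proves both polar vertices are exactly $n$-valent (the $k=\ell=1$ counting argument, using that $2n$ tiles already arise from the edge-pairs at one pole when $k=2$), and then proves Claim~\ref{claim:dn implies pdw} --- each ring vertex $v_i$ is adjacent to the opposite pole --- by a delicate lune/counting argument excluding the candidate fourth vertices $x_i=u$, $x_i=v$, and $x_i\ne u,v$ in turn. Only after that is the skeleton a pseudo-double wheel. Likewise your disposal of the residual $n=4$ face-centred axis (``over-determines the equatorial band'') is not an argument: the paper kills this case in Claim~\ref{claim:anti} by noting the polar face must be a regular quadrangle, cutting every tile along a diagonal to get a tiling by sixteen congruent isosceles $3$-gons with angles $5\pi/8,\,5\pi/16,\,5\pi/16$, and invoking the Ueno--Agaoka classification \cite{MR1954054} to show no such tiling exists. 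Some input of comparable strength is needed, and your sketch supplies none.

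Your third leg is where you genuinely diverge from the paper, and it is plausible though only sketched. The paper never proves \eqref{assert:isohedral}$\Rightarrow$\eqref{assert:Pn} directly; it proves \eqref{assert:isohedral}$\Rightarrow$\eqref{assert:dn} by a Sch\"onflies-symbol analysis (kite/dart/rhombus tiles give $D_{nd}$ by \cite{sakano15:anisohedral}; otherwise the tile has no mirror symmetry, Lemma~\ref{lem:no mirror} excludes mirrors of $\T$, the polyhedral groups are excluded by counting three-fold axes, $C_m$ is excluded because it cannot carry a tile at $N$ to a tile at $S$, and the order count $\#G\in 2n\mathbf{Z}$ forces $D_n$), after which the combinatorial reconstruction is delegated to \eqref{assert:dn}$\Rightarrow$\eqref{assert:Pn}. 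Your direct route --- symmetries preserve $\{N,S\}$ since these are the only vertices of valence $>3$, transitivity forces a pole-swapping symmetry, and the labelling of Figure~\ref{chart of TRPZ} is then pinned down edge-orbit by edge-orbit --- can probably be completed (e.g.\ the angle at the unique polar vertex of every tile is forced to be $2\pi/n$, and a shared meridian edge, being the ``before'' edge of one tile and the ``after'' edge of its neighbour, forces a single meridian length $a$), and it would buy a proof avoiding the group-theoretic decision-tree machinery. But note that the auxiliary results do not obviously do the jobs you assign them: the hypotheses of Proposition~\ref{prop:aux} (a vertex incident to only three $a$-edges, a $3$-valent vertex with edge-lengths $a,a,b$) must be verified in your configuration, and Lemma~\ref{lem:no mirror} constrains the symmetry group of $\T$, not the admissible labellings. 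In sum: the decomposition is sound and the \eqref{assert:isohedral}$\Rightarrow$\eqref{assert:Pn} leg is a viable alternative, but as written the proposal has a real hole in \eqref{assert:dn}$\Rightarrow$\eqref{assert:isohedral}, where the paper's Claims~\ref{claim:anti} and \ref{claim:dn implies pdw} carry the actual weight of the theorem.
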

  \begin{proof}

  $(\eqref{assert:Pn}\implies\eqref{assert:isohedral})$
By condition~\eqref{assert:Pn}, we compute the longitude and the
latitude~(i.e., the length of the geodesic segment from the north pole) of the vertices $v_i$'s of $\T$. 
There is an $n$-fold
axis $\rho$ of rotation through the two poles $N$ and $S$, because  there are three congruent paths between them.
 We see that
there is a two-fold axis $\ell_i$ of rotation though the midpoint of the
edge $v_i v_{i+1}$ and the midpoint of the edge $v_{(i+n \mod 2n)}
v_{(i+1+n \mod 2n)}$ and that $\ell_i$ is perpendicular to $\rho$, for
every $i$.  So we have  condition~\eqref{assert:dn}. By this and
   Figure~\ref{chart of TRPZ}, we have
condition~\eqref{assert:isohedral}.

\medskip $(\eqref{assert:dn}\implies\eqref{assert:Pn})$ 
We verify:
\begin{claim}\label{claim:anti}
If $m\ge n\ge4$,  any $m$-fold axis of
rotation of $\T$ is through two vertices.
\end{claim}

\begin{proof}The $m$-fold axis is not though the midpoint of an edge,
  by $m\ne2$.  The $m$-fold axis is not through an inner point
   of a tile. Otherwise all inner angles of the tile is equal, all edges
 are equilateral, $m = 4$. By the premise, $m=n=4$. As the tile is a
 regular quadrangle,
 all diagonal segments of
   the  tiles are less than $\pi$. Otherwise any pair of incident diagonal
   segments crosses to each other.  By drawing exactly one diagonal,
   geodesic segment in each quadrangular tile, we have a spherical
   tiling $\T'$ by $2n \times 2 = 16$ congruent isosceles spherical 3-gons. The
   inner angles of the isosceles 3-gons are $5\pi /8, 5\pi /16, 5\pi
   /16$. It is because the area of the quadrangular tile of the given
   tiling $\T$ is $\pi /2$, and the sum of the four equal inner angles
   is $5\pi /2$. However $\T'$ is impossible, by the
   classification of all spherical tilings by congruent spherical 3-gons~\cite[Table]{MR1954054}. Thus the axis is though a pair of
   antipodal vertices. This completes the proof of
Claim~\ref{claim:anti}. \qed\end{proof}

By
Claim~\ref{claim:anti}, the $n$-fold axis $\rho$ of rotation is through two vertices $u$ and $v$.
Both $u$ and $v$ are $n$-valent. Otherwise, for some positive integers
   $k,n$,
   $k n$
  equilateral
 edges are incident to $u$ and $\ell n$ equilateral edges are incident to $v$,
 because of the $n$-fold axis of rotation through $u$ and $v$. The $k n$
 pairs of neighboring edges incident to the vertex $u$ cause $k n$
 distinct tiles.  Since the number of tiles is $2n$, both of $k$ and
 $\ell$ are one or two. Let $k=2$. Then the $k n$ pairs of
 neighboring edges incident to the vertex $u$ cause already $2n$
 tiles. Then $v$ is not a vertex of any of these $2n$ tiles. To see it,
 assume some tile contains $v$ as a vertex. No vertex is adjacent to
 both $u$ and $v$. Otherwise the inner angle is $\pi$. $u$ is
 not adjacent to $v$, since the length of any edge is less than
 $\pi$. So if two vertices of a tile of $\T$ are incident to $u$, then
 some vertex other than $v$ is incident to them, because the tile is a
 quadrangle.  Thus the number of tiles is greater than $2n$.  So $k=\ell
   =1$.

   \medskip Hence there are exactly $n$ vertices $w_i$ $(0\le i\le n-1)$
   adjacent to $u$. All edges $u w_i$'s are equilateral by the $n$-fold
   axis of rotation through $u$ and $v$.  We assume that $u w_{i+1\bmod
   n}$ is next to $u w_i$, and that the two vertices $w_{i+1\bmod n}$
   and $w_i$ are adjacent to a vertex $v_i$. Let $T_i$ be a tile $u w_i
   v_1 w_{i+1 \bmod n }$. By the
 $n$-fold axis $\rho$, all $v_i$'s are distinct.

 \begin{claim}\label{claim:dn implies pdw} $v_i$  is
adjacent to  $v$~$(0\le i<n)$.
 \end{claim}
 \begin{proof} 
 Otherwise, there is a non-pole vertex $\tilde u_i$ adjacent to $v_i $ such that an
 edge $v_i  \tilde u_i$ is a neighbor of $v_i  w_{i+1 \bmod n }$ without loss of generality.  Then
 there is a quadrangular tile $T'_i$ having the three vertices $\tilde
  u_i,v_i ,w_{i+1 \bmod n }$.
  \begin{figure}
    \begin{tikzpicture}[xscale=1.3,yscale=0.4]
\draw (0,0)node[left] {$u$} -- (1,1) node[above] {$w_{i+1 \bmod n }$} -- (2,0)
     node[right] {$v_i $} -- (1,-1) node[below] {$w_i$} --cycle;
     \node at (1,0) {$T_i$};
     \draw (2,0) -- (3,1) node[right] {$\tilde u_i$} -- (2,2)
     node[above] {$x_i $};
     \draw[dashed] (2,2)-- (1,1) ;
     \node at (2,1) {$T'_i$};
     \node at (4,0) {$v$};
    \end{tikzpicture}
   \caption{Proof of Claim~\ref{claim:dn implies pdw}.}
  \end{figure}
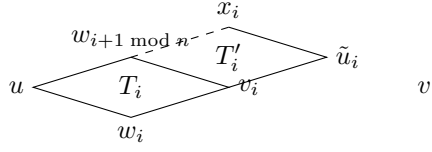

 The other vertex, say $x_i $, of the tile $T'_i$ is not the vertex $v$. Otherwise, an edge
 $w_{i+1 \bmod n } x_i $ is transformed to an edge $w_i v$ by the
  $n$-fold axis $\rho$ of
 rotation of $\T$.  The edge $w_i
 v$ cannot be transversal to the edge $v_i  \tilde u_i$. Hence, the
 lune~(that is, digon) determined by the two edges $u w_i$ and $u w_{i+1 \bmod n }$
 contains a tile other than $T_i= u w_i v_i  w_{i+1 \bmod n }$ and $T'_i=v_i  \tilde u_i v
 w_{i+1 \bmod n }$. Thus  $\T$ has more than $2n$ number of tiles, which is absurd. So $x_i \ne v$.
 
The vertex $x_i$ of the tile $T'_i$ is not the vertex $u$. Otherwise, an edge
 $x_i \tilde u_i$ has the same length as $u w_i$ and $u w_{i+1 \bmod n }$. Since the
 edge $u w_{i+1 \bmod n }$ is a neighbor of the edge $u w_i$, the edge $x_i  \tilde u_i$
 is a neighbor of $u w_i$ or $u w_{i+1 \bmod n }$.  Consider the latter case.  By the
 $n$-fold axis $\rho$ through $u$ and $v$, the tile $T_i=u w_i v_i  w_{i+1 \bmod n }$ is rotated
 to $x_i  w_{i+1 \bmod n } v_i  \tilde u_i$. But this is impossible because the vertices
 $w_i$ and $w_{i+1 \bmod n }$ of the former tile $T_i=u w_i v_i  w_{i+1 \bmod n }$ and the vertices $w_{i+1 \bmod n }$ and $\tilde
 u_i$ of the latter tile $T'_i=x_i  w_{i+1 \bmod n } v_i  \tilde u_i$
  are all incident to the vertex $v_i $. So $w_{i+1 \bmod n }$
 becomes two-valent. When the edge $x_i \tilde u_i$ is a neighbor of $u w_i$,
 we have similarly a contradiction. Thus $x_i \ne u$.

Any vertex of the tile $T'_i= w_{i+1 \bmod n } v_i  \tilde u_i x_i $ is neither the
$n$-valent $u$ nor the $n$-valent $v$, so the number of the tiles of the
tiling $\T$ is greater than $2n$. This is absurd. Thus the
vertex $v_i$ is adjacent to  $v$. This establishes
Claim~\ref{claim:dn implies pdw}.\qed
 \end{proof}

By Claim~\ref{claim:dn implies pdw}, the
skeleton of $\T$ is the pseudo-double wheel of $2n$ faces. By
the $n$-fold axis $\rho$ through $u$ and $v$, all $n$ edges incident to
the vertex $u$ have the same length $a$, and all $n$ edges incident to
the vertex $v$ have the same length $a'$.

By the assumption, $\T$ has $n$ horizontal two-fold rotation axes,
each through a pair of midpoints of edges. As they swap the vertices $u$
and $v$, we have $a=a'$. By computing the longitude and the latitude of
each non-pole vertices, the angle-assignment and the length-assignment
of $\T$ is exactly as in Figure~\ref{chart of TRPZ}.

   \medskip $(\eqref{assert:isohedral}\implies\eqref{assert:dn})$
   Suppose that the tile of $\T$ is a rhombus, a kite, or a dart.  By
   the classification of spherical monohedral (kite/dart/rhombus)-faced
   tilings~\cite[Table~1]{sakano15:anisohedral}, Sch\"onflies symbol~(\cite{cotton09:_chmic_applic_of_group_theor},\cite{Deza}) of $\T$ is $D_{nd}$. 
   In the decision tree~\cite[Fig.~3.10]{cotton09:_chmic_applic_of_group_theor}, by going from the leaf ``$D_{nd}$'' to the root,
   we see that $D_{nd}$ must have ``$n$ $C_2$'s $\perp$ to $C_n$'' ($n$ two-fold axes of rotation perpendicular to an $n$-fold axis of rotation).
   Thus \eqref{assert:dn} holds. 
   By the same reasoning, Sch\"onflies symbol $D_n$ requires \eqref{assert:dn}.
   So, to complete the proof of $(\eqref{assert:isohedral}\implies\eqref{assert:dn})$, we  show: if the tile of $\T$ is none of a kite, a dart
   and a rhombus, then  $\T$ has Sch\"onflies symbol $D_n$.

Sch\"onflies symbol of $\T$ is none of $T$, $T_d$, $T_h$, $O$, $O_h$,
   $I$, and $I_h$.  Otherwise, the tiling $\T$ has more than three
   three-fold rotation axes, by
   \cite[Sect.~3.14]{cotton09:_chmic_applic_of_group_theor}. Since the skeleton of
   $\T$ is the pseudo-double wheel of $2n$ faces~$(n\ge4)$, $\T$ has
   only two vertices $N$ and $S$ of valence more than three. So there is
   a three-fold axis $\rho$ of rotation through a three-valent
   vertex. Thus the rotation in $2\pi/3$ around $\rho$ transforms
   $N$~($S$, resp.) to $S$~($N$, resp.), or fixes both of $N$ and
   $S$. So the rotation in $4\pi/3$ around $\rho$ fixes both of $N$ and
   $S$.  This is absurd, since the three-fold rotation axis is through
   neither $N$ nor $S$.

   In any pseudo-double wheel, any edge is incident to an odd-valent
   vertex. Because we assumed that the tile of the tiling $\T$ on a
   pseudo-double wheel
   is none of a rhombus, a
   kite,  and a dart, the tile has no mirror symmetry. By
   Lemma~\ref{lem:no mirror}, $\T$ has no mirror symmetry.

   So Sch\"onflies symbol of the tiling $\T$ is $C_m$ or $D_m$ for some
   integer $m\ge2$. This is due to
   the systematic procedure to determine the
   Sch\"onflies
   symbol~\cite[Sect.~3.14]{cotton09:_chmic_applic_of_group_theor}. Then
     the tiling $\T$ has an
   $m$-fold axis $\rho$ of rotation. Let $G$ be the symmetry
   group  of the tiling $\T$. Because the tiling $\T$ is
   isohedral, $G$ acts transitively on the tiles of $\T$. So 
   \begin{quote}
   (\#)\quad  the order $\#G$ is a multiple of the number
   $2n\ge8$ of tiles of $\T$. 
   \end{quote}

Assume Sch\"onflies symbol of $\T$ is  $C_m$ for some $m\ge2$.  By
   \cite[p.~41]{cotton09:_chmic_applic_of_group_theor}, $\#G=m$.  By (\#),
   $m\ge8$. $\rho$ is through a vertex with
   the valence being a multiple of $m$. So the $m$-fold axis $\rho$ of rotation
   is through the poles $N$ and $S$, and thus $m=n$.  The symmetry
   operations of $\T$ are exactly $m$ rotations around
   $\rho$ by $C_m$~\cite[p.~41]{cotton09:_chmic_applic_of_group_theor}.
No symmetry operation of $\T$ transforms a  tile having $N$ as a vertex
   to a tile having $S$ as a vertex. However $\T$
   is isohedral.

   Thus Sch\"onflies symbol of the tiling $\T$ is $D_m$ for some
   $m\ge2$.
By \cite[p.~41]{cotton09:_chmic_applic_of_group_theor}, $\#G=2m$. 
By (\#), $m$ is a multiple of $n\ge4$. So the $m$-fold
   axis $\rho$ of rotation is not through a three-valent vertex of
   $\T$, but through $N$ and $S$ of the pseudo-double wheel, and
   $m=n$. Hence the condition~\eqref{assert:dn} holds.  This completes
   the proof of (\eqref{assert:isohedral}$\implies$\eqref{assert:dn}).
 \qed\end{proof}

\section{Tiles of spherical isohedral tilings over pseudo-double wheels \label{sec:tiles}}

 \begin{definition}\label{def:pf}For $n\ge3$, 
  an \emph{$\TRPZ_n$-quadrangle} is the tile of some $\T\in\TRPZ_n$.
      \end{definition}
      
  \begin{fact}\label{fact:q}For given $n\ge3$, $\alpha,\gamma\in (0,\,\pi)\cup (\pi,\, 2\pi)$ and
  $a\in (0,\,\pi)$, there is at
  most one $\TRPZ_n$-quadrangle, modulo $SO(3)$, such that
  \begin{itemize}\item
  the cyclic list of inner angles in the clockwise order is
$$(\alpha,\beta,\gamma,\delta)=(\alpha,2\pi/n,\gamma,2\pi-\alpha-\gamma)$$
		      $($cf. Figure~\ref{chart of TRPZ}$)$;
		      and

		      \item the edge $\alpha\beta$, the edge $\beta\gamma$, and the geodesic
  segment $\beta\delta$ have length
  $a,a,\pi-a$. \end{itemize}
\end{fact}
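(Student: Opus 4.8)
The plan is to cut the quadrangle along the diagonal $\beta\delta$ and to reconstruct each half as a spherical triangle whose three sides are forced by the given data. Label the vertices $B,A,D,C$ carrying the inner angles $\beta,\alpha,\delta,\gamma$, so that (cf. Figure~\ref{fig:4gon}) the quadrangle is $BADC$ in cyclic order, with edges $BA=a$, $AD=b$, $DC=c$, $CB=a$, and with the diagonal $BD=\pi-a$ prescribed. The diagonal $BD$ splits $BADC$ into the two triangles $BAD$ and $BCD$; since the vertices $A$ and $C$ do not lie on the diagonal, the inner angle of $BAD$ at $A$ is the full $\alpha$, and that of $BCD$ at $C$ is the full $\gamma$.

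First I would determine $b=AD$. In the triangle $BAD$ the sides $BA=a$ and $AD=b$ enclose the angle $\alpha$ at $A$ and lie opposite the side $BD=\pi-a$, so the spherical cosine law (Proposition~\ref{prop:vinberg}\eqref{assert:scl}) gives
\[
-\cos a=\cos a\cos b+\sin a\sin b\cos\alpha .
\]
Writing $1+\cos b=2\cos^2(b/2)$ and $\sin b=2\sin(b/2)\cos(b/2)$ and factoring out $\cos(b/2)>0$ (legitimate because $b\in(0,\pi)$), this collapses to $\tan(b/2)=-\cot a/\cos\alpha$, which pins down $b$ uniquely whenever a solution in $(0,\pi)$ exists; if none exists there is no such quadrangle and the assertion holds vacuously. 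The identical computation in the triangle $BCD$ gives $\tan(c/2)=-\cot a/\cos\gamma$, so $c=DC$ is likewise uniquely determined.

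With $b$ and $c$ in hand, each triangle now has all three sides prescribed ($a,b,\pi-a$ and $a,c,\pi-a$), so by the uniqueness part of Proposition~\ref{prop:vinberg}\eqref{assert:vinberg} in its side-form (obtained by the stated exchange $A\leftrightarrow\pi-a$, etc.) each is determined up to congruence. It then remains to glue them back along $BD$. Modulo $SO(3)$ I may place the oriented diagonal in a standard position, with $B$ at the north pole and $D$ on a fixed meridian at colatitude $\pi-a$; this normalization is unique, since the only rotation fixing both $B$ and the direction towards $D$ is the identity. In that frame $A$ is one of the two points at distance $a$ from $B$ and $b$ from $D$, and these two candidates are mirror images across the great circle through $B$ and $D$; the prescribed \emph{oriented} angle $\alpha$ at $A$ selects exactly one of them, and the oriented angle $\gamma$ fixes $C$ in the same way. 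Hence the whole quadrangle $BADC$ is determined, giving at most one $\TRPZ_n$-quadrangle modulo $SO(3)$. The routine part is the two cosine-law computations; the one step demanding care is this final gluing, where a residual mirror image must be excluded. This is precisely what the orientation convention fixed in Section~\ref{sec:basic definitions} accomplishes: because $\alpha,\gamma$ are signed angles in $(0,\pi)\cup(\pi,2\pi)$ rather than unsigned ones, they distinguish a configuration from its reflection, so no spurious second solution survives. I expect the hypotheses $\beta=2\pi/n$ and $\delta=2\pi-\alpha-\gamma$ to be unnecessary for uniqueness — they only guarantee that the reconstructed angle $\angle ADC$ closes up to the required $\delta$, and thus govern existence rather than uniqueness.
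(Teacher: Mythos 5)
Your route is genuinely different from the paper's. The paper does not cut along the diagonal: it places the vertex $\beta$ at a point $N$, lays off \emph{both} edges of length $a$ at the prescribed angle $\beta=2\pi/n$ --- so the hypothesis $\beta=2\pi/n$ is hard-wired into the frame from the start --- and then takes the two great circles obtained by bending by $\pi-\alpha$ and $\pi-\gamma$ at the two endpoints. These circles are distinct because $\delta\ne\pi$, hence meet in exactly two points, and the fourth vertex must be one of them; the wrong one is excluded because choosing it flips the inner angle at $N$ from $\beta$ to $2\pi-\beta$ and forces tiles to overlap. Your cosine-law identity $\cos(\pi-a)=\cos a\cos b+\sin a\sin b\cos\alpha$ appears in the paper only as a closing remark determining $b$ explicitly; you promote it to the engine of the proof, and your SSS-rigidity-plus-oriented-gluing argument is a legitimate alternative, with the orientation convention of Section~\ref{sec:basic definitions} correctly doing the work of excluding the mirror candidates. (Concavity is harmless in your cosine-law step, since the underlying triangle angle is $2\pi-\alpha$ when $\alpha>\pi$ and $\cos(2\pi-\alpha)=\cos\alpha$.) In effect the paper proves uniqueness from the data $(n,\alpha,\gamma,a)$ using $\beta$ but not the diagonal, while you try to prove it from $(\alpha,\gamma,a)$ plus the diagonal but without $\beta$.

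That trade is where the genuine gap sits, exactly at the point you wave it away. The step ``pins down $b$ uniquely'' fails when $\cos a=0$ and $\cos\alpha=0$ simultaneously, i.e., $a=\pi/2$ and $\alpha\in\{\pi/2,\,3\pi/2\}$, values allowed by the hypotheses of Fact~\ref{fact:q}: then $-\cos a=\cos a\cos b+\sin a\sin b\cos\alpha$ reads $0=0$, every $b\in(0,\pi)$ solves it, and your formula $\tan(b/2)=-\cot a/\cos\alpha$ is $0/0$. This is not a phantom degeneracy. Put $B$ at the north pole; then $D$ (at distance $\pi-a=\pi/2$) and $A$ (at distance $a=\pi/2$) both lie on the equator, the angle at $A$ between the meridian $AB$ and the equatorial edge $AD$ is $\pi/2$ whatever the arc $b=AD$, and the rigid triangle $BCD$ can be glued on unchanged. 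So there is a one-parameter family of quadrangles all sharing \emph{your} data (two $a$-edges, diagonal $\pi-a$, oriented angles $\alpha$ and $\gamma$), distinguished only by the inner angle at $B$, which equals $b$ plus a constant. Hence your closing claim that the hypothesis $\beta=2\pi/n$ is ``unnecessary for uniqueness'' is false: it is precisely that hypothesis which selects at most one member of the family, and the paper's construction survives the degenerate case because it fixes $\beta$ before anything else. The repair is cheap --- invoke $\beta$ at this one point, or dispose of $a=\pi/2$ separately; note you cannot silently cite Lemma~\ref{lem:shorta} ($a\ne\pi/2$ for $\TRPZ_n$-quadrangles), since its proof uses tilings and comes after Fact~\ref{fact:q}. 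Finally, your division by $\cos\alpha$ deserves one more sentence: if $\cos\alpha=0$ but $\cos a\ne0$ the equation has no solution $b$, which your vacuous-case proviso does cover, but as written the formula presupposes $\cos\alpha\ne0$.
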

\begin{proof}
  From a point $N$ on the unit sphere, travel in distance $a$, bend counterclockwise in $\pi-\alpha$, and
  travel in $2\pi$. Then, by the last travel, we have a great circle $C$. 
  The bending angle intends the inner angle $\alpha$.
    By abuse of notation, we denote the bending point by $\alpha$. $C$ is through the point $\alpha$.
 Similarly, from $N$, travel in distance $a$. Here the angle of this travel from
 the travel $N\alpha$ of length $a$ is $\beta=2\pi/n$. By abuse of
 notation, we often write $\beta$ for the vertex $N$.
 Then
 bend clockwise in $\pi-\gamma$, and
  travel in $2\pi$. By the last travel, we have a great circle $C'$. By abuse of notation, we denote the bending point by $\gamma$. $C'$ is through the point $\gamma$.

  Then
  $C\ne C'$ by $\delta\ne\pi$. So
  $C$ and $C'$ share exactly two points $P,P'$. If each of $P$ and $P'$ is a vertex of the
  $\TRPZ_n$-quadrangle, then the inner angle of an $\TRPZ_n$-quadrangle
  $\alpha\beta\gamma P$
  which is diagonal to $P$ is $\beta$ if and only if the inner angle of
  an $\TRPZ_n$-quadrangle  $\alpha\beta\gamma P'$ diagonal to $P'$ is
  $2\pi-\beta$. In this case, $P'$ is inappropriate, as the tiles must
  not overlap.  Hence,  for $n,\alpha,\gamma,a$, there is at most one
  pair of $b,c$. Actually, $b$ is determined from $\alpha,a$ by a spherical cosine law~(Proposition~\ref{prop:vinberg}~\eqref{assert:scl})
  $\cos(\pi-a)= \cos a \cos b + \sin a \sin b \cos\alpha$, and $c$ is determined similarly.\qed
  \end{proof}
 \begin{definition}\label{def:ispdw Q}
  Let  $\PQ_{n,\alpha,\gamma,a}$ be an $\TRPZ_n$-quadrangle of Fact~\ref{fact:q}. We identify
  $\PQ_{n,\alpha,\gamma,a}$ modulo $SO(3)$.
 \end{definition}

 In fact, any $\TRPZ_n$-quadrangle is
 specified without mentioning a tiling of $\TRPZ_n$, as in the following
 Fact. There the vertices $A,B,C,D$ intend the vertices $N, v_0, v_1, v_2$
 of a tiling of $\TRPZ_n$.
\begin{fact}
 \label{fact:Pquad}
\begin{enumerate}
\item \label{assert:quadrangle} An $\TRPZ_n$-quadrangle is exactly a
        quadrangle $ABCD$ such that $AB=\pi - AC = AD$, the area
  of $ABCD$ is $2\pi/n$, and the inner angle $A$ is $2\pi/n$.
 
\item \label{assert:bijective} The set of $\TRPZ_n$-quadrangles
 bijectively corresponds to $\TRPZ_n$.
 \end{enumerate}
\end{fact}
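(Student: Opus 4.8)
The plan is to prove the two assertions of Fact~\ref{fact:Pquad} by reducing everything to Fact~\ref{fact:q} and the combinatorial structure of $\TRPZ_n$ already established.

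\medskip
\textbf{Assertion~\eqref{assert:quadrangle}.} First I would show that every $\TRPZ_n$-quadrangle satisfies the three stated metric conditions, and then conversely that any quadrangle $ABCD$ satisfying them is an $\TRPZ_n$-quadrangle. For the forward direction, recall from Definition~\ref{def:pf} that an $\TRPZ_n$-quadrangle is a tile of some $\T\in\TRPZ_n$; placing the vertices $A,B,C,D$ at $N,v_0,v_1,v_2$ as in Figure~\ref{chart of TRPZ}, the inner angle at $A=N$ is $\beta=2\pi/n$, the edges $AB$ and $AD$ are the two length-$a$ edges meeting at $N$ (so $AB=AD=a$), and $AC=\beta\delta$ is the diagonal of length $\pi-a$ by Fact~\ref{fact:q}; thus $AB=\pi-AC=AD$. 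The area equals $2\pi/n$ because the $2n$ congruent tiles partition the sphere of total area $4\pi$. For the converse, I would observe that the four conditions $AB=AD=a$, $AC=\pi-a$, $\angle A=2\pi/n$, and $\mathrm{area}=2\pi/n$ pin down the quadrangle: the two edges of length $a$ at $A$ and the angle $2\pi/n$ between them determine $B$ and $D$ (hence the position of $C$ on the geodesic $AC$ of length $\pi-a$), and then the remaining data are forced exactly as in the construction in the proof of Fact~\ref{fact:q}. The key point is that the area constraint $2\pi/n=2\pi-\alpha-\gamma-\delta+\dots$ (the spherical excess relation) together with $\angle A=\beta=2\pi/n$ selects the inner angles $(\alpha,\beta,\gamma,\delta)$ to have the cyclic form of Fact~\ref{fact:q}, so the quadrangle coincides with some $\PQ_{n,\alpha,\gamma,a}$ and is therefore an $\TRPZ_n$-quadrangle.

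\medskip
\textbf{Assertion~\eqref{assert:bijective}.} Here I would define the map from $\TRPZ_n$ to $\TRPZ_n$-quadrangles by sending a tiling to (the congruence class of) its tile; this is well-defined because all $2n$ tiles are congruent by definition of $\TRPZ_n$, and lands in the set of $\TRPZ_n$-quadrangles by Definition~\ref{def:pf}. Surjectivity is immediate, since every $\TRPZ_n$-quadrangle is by definition the tile of some such tiling. The substantive content is injectivity: given an $\TRPZ_n$-quadrangle, I must show the tiling $\T\in\TRPZ_n$ realizing it is unique modulo $SO(3)$. This follows because the data $(n,\alpha,\gamma,a)$ of the tile, via Fact~\ref{fact:q}, determine the tile up to $SO(3)$, and then the combinatorial template of Figure~\ref{chart of TRPZ} together with the edge-to-edge and congruence constraints force the placement of every tile: fixing $N$ at the north pole and one edge $Nv_0$ fixes the whole first tile, and the prescribed cyclic orders around $N$, around $S$, and around each $v_i$ leave no freedom in attaching the remaining $2n-1$ copies.

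\medskip
I expect the main obstacle to lie in the converse half of assertion~\eqref{assert:quadrangle}: showing that the four metric conditions are \emph{sufficient} to force the full inner-angle pattern of an $\TRPZ_n$-quadrangle, rather than merely necessary. The difficulty is that $\angle A$, two edge-lengths, a diagonal length, and the area are only five scalar constraints, whereas a general spherical quadrangle has more degrees of freedom; I must verify that these particular constraints interact (through the spherical cosine law of Proposition~\ref{prop:vinberg} applied to the two 3-gons $ABC$ and $ACD$ cut out by the diagonal $AC$) to yield precisely the angle relation $\beta=2\pi/n$ and $\alpha+\gamma+\delta=2\pi-2\pi/n$ characteristic of Fact~\ref{fact:q}. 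Splitting $ABCD$ along $AC$ into two 3-gons with the shared side $\pi-a$ and applying the area (spherical excess) bookkeeping to each half is the natural device, and the uniqueness clause of Proposition~\ref{prop:vinberg}~\eqref{assert:vinberg} is what ultimately removes any residual ambiguity.
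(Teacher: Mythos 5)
Your treatment of the forward implication in assertion~\eqref{assert:quadrangle} and of assertion~\eqref{assert:bijective} is broadly consistent with the paper (which disposes of the latter with ``Clear''), but the converse half of assertion~\eqref{assert:quadrangle} --- the part you yourself single out as the main obstacle --- is not actually proved, and the route you propose for it is circular. By Definition~\ref{def:pf}, an $\TRPZ_n$-quadrangle is a tile of some tiling $\T\in\TRPZ_n$, and Fact~\ref{fact:q} asserts only that there is \emph{at most one} $\TRPZ_n$-quadrangle carrying the data $(n,\alpha,\gamma,a)$ with diagonal $\pi-a$: it is a uniqueness statement quantified over quadrangles already known to be tiles. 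So your concluding step, ``the quadrangle coincides with some $\PQ_{n,\alpha,\gamma,a}$ and is therefore an $\TRPZ_n$-quadrangle,'' presupposes that a tiling of $\TRPZ_n$ with these parameters exists, which is precisely what has to be shown. Neither the uniqueness clause of Proposition~\ref{prop:vinberg}~\eqref{assert:vinberg} nor angle bookkeeping on the two 3-gons $ABC$ and $ACD$ can substitute for an existence argument: you must actually assemble $2n$ copies and verify that they close up.

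The paper's proof supplies exactly the two ingredients your sketch is missing. First, from $AB=\pi-AC=AD$ it extracts the supplementary-angle identities $\angle ABC=\pi-\angle BCA$ and $\angle ADC=\pi-\angle DCA$ (the vertex $C$ lies at distance $AB$ from the antipode $A'$ of $A$, so the colunar triangle $A'CB$ is congruent to $ABC$ by side-side-side); hence $\angle B+\angle C+\angle D=2\pi$, and the spherical excess formula then converts the area hypothesis into $\angle A=2\pi/n$, so one of the two stated hypotheses is in fact redundant. Second, these equalities are exactly the closure conditions of Figure~\ref{chart of TRPZ}: $n$ copies of the angle $\beta=2\pi/n$ fit around each pole, the triple $\alpha,\delta,\gamma$ fills $2\pi$ at every three-valent vertex, and $AC=\pi-a$ forces all the southern tiles to share the single vertex $A'=S$; arranging the $2n$ copies as in Figure~\ref{chart of TRPZ} therefore genuinely produces a tiling of $\TRPZ_n$. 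Note also an arithmetic slip in your sketch: with $\beta=2\pi/n$ and area $2\pi/n$, the excess relation gives $\alpha+\gamma+\delta=2\pi$ (matching $\delta=2\pi-\alpha-\gamma$ in Fact~\ref{fact:q}), not $\alpha+\gamma+\delta=2\pi-2\pi/n$ --- and $\alpha+\gamma+\delta=2\pi$ is precisely the three-valent vertex condition you need. Finally, the spherical cosine law plays no role in this Fact; it is used in the proof of Fact~\ref{fact:q} only to recover the edge-lengths $b$ and $c$, and your dimension worry is moot since a spherical quadrangle modulo rotations has exactly five degrees of freedom.
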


 \begin{proof} \eqref{assert:quadrangle} As the edges $BC$ and $CD$ have
  length less than $\pi$, we have two spherical 3-gons $ABC$ and
  $CDA$. As noted in the caption of Figure~\ref{fig:cs},  $\angle ABC
  = \pi - \angle BCA$ and $\angle ADC = \pi - \angle DCA$. So the three
  inner angles of the vertices $B,C,D$ sum up to $2\pi$. Thus the inner
  angle of the vertex $A$ is $2\pi/n$ since the area of $ABCD$ plus
  $2\pi$ is the sum of all inner angles $A,B,C,D$. By regarding the
  inner angles $A,B,C,D$ as $\beta,\alpha,\delta,\gamma$ and then
  arranging the $2n$ copies of the quadrangle $ABCD$ as
  Figure~\ref{chart of TRPZ}, we conclude $ABCD$ is a tile of a tiling
  of $\TRPZ_n$.  \eqref{assert:bijective} Clear.\qed
\end{proof}

An edge incident to $N$ or $S$ is
called a \emph{meridian edge}.

  \begin{lemma}\label{lem:shorta}
   Suppose $n\ge3$, $\alpha,\gamma\in (0,\,\pi)\cup(\pi,\, 2\pi)$, and
   $a\in (0,\pi)$. Every $\TRPZ_n$-quadrangle
 $\PQ_{n,\alpha,\gamma,a}$ satisfies $a\ne\fra{\pi}{2}$, $\alpha\ne\fra{\pi}{2},\gamma\ne\fra{\pi}{2}$,
\begin{align}
& 0<a<\frac{\pi}{2}\iff 0<\delta<\pi;\ \mbox{and}\label{cond:deltaconvex}\\
&\alpha>\pi\ \mbox{or}\ \gamma>\pi\ \implies\
                 \frac{3\pi}{2} > \alpha > \pi > \gamma > \frac{\pi}{2}
\ \ \mbox{or}\ \ \frac{3\pi}{2} > \gamma > \pi > \alpha > \frac{\pi}{2}.
 \label{assert:alphagamma}
\end{align}
\end{lemma}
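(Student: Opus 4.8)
The plan is to cut the quadrangle $\PQ_{n,\alpha,\gamma,a}=ABCD$ (with $A=N$ carrying the angle $\beta=2\pi/n$, and $B,C,D$ carrying $\alpha,\delta,\gamma$ as in Fact~\ref{fact:Pquad}) along the diagonal $AC=\beta\delta$, which by Fact~\ref{fact:q} has length $\pi-a$, into the two spherical triangles $ABC$ and $ACD$. Each enjoys the defining feature $AB+AC=a+(\pi-a)=\pi=AD+AC$, so by the base-angle relation already recorded in the proof of Fact~\ref{fact:Pquad} we have $\angle ACB=\pi-\alpha$ and $\angle ACD=\pi-\gamma$ whenever $\alpha,\gamma<\pi$. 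Feeding $\angle ACB=\pi-\alpha$, the opposite side $AB=a$, and the apex angle $\angle BAC$ into the spherical cosine theorem for angles (Proposition~\ref{prop:vinberg}\eqref{assert:scla}) at the vertex $C$ and collapsing the result with a half-angle identity yields the master relation
\[
\tan\frac{\angle BAC}{2}=-\tan\alpha\,\cos a,\qquad
\tan\frac{\angle CAD}{2}=-\tan\gamma\,\cos a,
\]
the two triangles being treated identically. Since $\angle BAC,\angle CAD\in(0,\beta)\subset(0,\pi)$, both left-hand sides are strictly positive, and this positivity is the engine driving all three conclusions.

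First I would dispose of the nondegeneracies. If $a=\frac{\pi}{2}$ then $AB=AC=AD=\frac{\pi}{2}$, so $B,C,D$ all lie on the great circle polar to $A$; then $C$ lies on the geodesic $BD$ and $\delta=\pi$, contradicting the definition of a spherical $4$-gon. If $\alpha=\frac{\pi}{2}$ then $\angle ACB=\frac{\pi}{2}$, and the spherical cosine theorem (Proposition~\ref{prop:vinberg}\eqref{assert:scl}) for the side $AB=a$ collapses to $\cos a\,(1+\cos BC)=0$, forcing $a=\frac{\pi}{2}$ or $BC=\pi$, both already excluded; symmetrically $\gamma\ne\frac{\pi}{2}$. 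This settles $a,\alpha,\gamma\ne\frac{\pi}{2}$.

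Next, in the regime $\alpha,\gamma\in(0,\pi)$ (both of $B,D$ convex), positivity of the two tangents forces $\tan\alpha\,\cos a<0$ and $\tan\gamma\,\cos a<0$; hence $\tan\alpha$ and $\tan\gamma$ carry the common sign opposite to $\cos a$, so they cannot straddle $\frac{\pi}{2}$. Thus $a<\frac{\pi}{2}$ forces $\alpha,\gamma\in(\frac{\pi}{2},\pi)$, whence $\alpha+\gamma>\pi$ and $\delta=2\pi-\alpha-\gamma<\pi$; while $a>\frac{\pi}{2}$ forces $\alpha,\gamma\in(0,\frac{\pi}{2})$, whence $\delta>\pi$. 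Reading this as an equivalence gives \eqref{cond:deltaconvex} on this regime (where \eqref{assert:alphagamma} is vacuous).

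The hard part is the reflex regime, say $\alpha>\pi$ (the case $\gamma>\pi$ is symmetric, and at most one of $\alpha,\gamma,\delta$ can exceed $\pi$ for a simple quadrangle). Here the diagonal $AC$ exhibits $ABCD$ as the triangle $ACD$ with the triangle $ABC$ excised, the apex angle at $B$ in $ABC$ now being $2\pi-\alpha\in(0,\pi)$ and the apex angles satisfying $\angle CAD-\angle BAC=\beta$. Rerunning the identity with $\alpha\mapsto 2\pi-\alpha$ flips the master relation to $\tan\frac{\angle BAC}{2}=\tan\alpha\,\cos a$, so positivity now demands $\tan\alpha\,\cos a>0$ and $\tan\gamma\,\cos a<0$, i.e. $\tan\alpha,\tan\gamma$ have opposite signs. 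Combining these sign constraints with the monotonicity $\angle CAD>\angle BAC$ (forced by $\beta>0$) and with the positivity $\delta=2\pi-\alpha-\gamma>0$ eliminates the combination $a>\frac{\pi}{2}$ outright and pins the surviving combination to $\pi<\alpha<\frac{3\pi}{2}$, $\frac{\pi}{2}<\gamma<\pi$, and $a<\frac{\pi}{2}$; this is exactly \eqref{assert:alphagamma}, and since it yields $\delta<\pi$ it is also consistent with \eqref{cond:deltaconvex}. I expect the genuine obstacle to lie entirely in this last step: verifying that the excised-triangle geometry indeed replaces $\angle BAC+\angle CAD=\beta$ by $\angle CAD-\angle BAC=\beta$ and correctly flips the sign of the master relation, and then checking that the monotonicity $\angle CAD>\angle BAC$ together with $\delta>0$ really excludes \emph{every} spurious sign pattern rather than merely some of them.
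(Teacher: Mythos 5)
Your proposal is correct, but it follows a genuinely different route from the paper's. The paper works on the tiling itself: it identifies $N,S$ with the poles, uses $N v_1=\pi-a$ to place $v_0,v_2$ and $v_1$ relative to the equator (so that \eqref{cond:deltaconvex} becomes a statement about which hemisphere each vertex lies in), and settles \eqref{assert:alphagamma} by a rather informal degeneration argument (``in the critical situation $\alpha=\pi/2$, $\gamma=3\pi/2$, $\delta=0$''), while its proof of $\alpha,\gamma\ne\pi/2$ uses the isosceles triangle $N v_1 v_2$ with two right angles. You instead argue on the single abstract quadrangle of Fact~\ref{fact:Pquad}: cutting along $AC$ and exploiting $AB+AC=AD+AC=\pi$ gives the supplementary base angles, and the dual cosine law then yields $\tan(\angle BAC/2)=-\tan\alpha\cos a$ and $\tan(\angle CAD/2)=-\tan\gamma\cos a$; these are precisely equations \eqref{eq:u}--\eqref{eq:v}, which the paper itself only deploys later, in Section~\ref{sec:q}. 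Your sign bookkeeping is then airtight, and in the convex regime you in fact only need $\angle BAC,\angle CAD\in(0,\pi)$ (automatic for angles of genuine triangles), not the stronger $\in(0,\beta)$, so you never have to decide whether the diagonal is interior there. The paper's proof is shorter given the coordinate picture of Figure~\ref{fig:cs} already in place; yours is more uniform across the regimes and arguably more rigorous on \eqref{assert:alphagamma}, at the cost of one planar-style decomposition lemma.

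The two points you flag at the end do close, and you should not leave them as expectations. (i) Since the tile lies in the interior of a hemisphere, gnomonic projection from its centre maps geodesics to lines, so the decomposition claim reduces to the elementary planar fact that in a simple quadrilateral the diagonal \emph{avoiding} the reflex vertex is exterior: if $AC$ were interior, $\alpha$ would be an angle of the triangle $ABC$, hence less than $\pi$. Thus $B$ lies inside the triangle $ACD$, the ray $AB$ lies strictly between the rays $AC$ and $AD$, and $\angle CAD-\angle BAC=\beta$; the sign flip via $\alpha\mapsto 2\pi-\alpha$ is a one-line recomputation. (ii) The spurious branch dies in two lines: if $a>\pi/2$ then $\cos a<0$ forces $\alpha\in(3\pi/2,\,2\pi)$ and $\gamma\in(0,\,\pi/2)$, and $\angle CAD>\angle BAC$ gives $-\tan\gamma\cos a>\tan\alpha\cos a$, i.e.\ $\tan\gamma>\tan(2\pi-\alpha)$ with both $\gamma$ and $2\pi-\alpha$ in $(0,\,\pi/2)$, whence $\alpha+\gamma>2\pi$ and $\delta<0$, absurd; so $a<\pi/2$ and the signs pin $\pi<\alpha<3\pi/2$, $\pi/2<\gamma<\pi$ exactly as in \eqref{assert:alphagamma}. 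One clause worth adding: the boundary value $\alpha=3\pi/2$, where $\tan\alpha$ is undefined, is excluded by running the dual cosine law in product form, $\cos(2\pi-\alpha)(\cos\varphi'-1)=\sin\varphi'\sin(2\pi-\alpha)\cos a$, which with $\cos(2\pi-\alpha)=0$ and $\cos a\ne0$ forces $\sin\varphi'=0$, a degenerate triangle.
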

 \begin{proof} Consider a tiling of $\TRPZ_n$.
  If $a=\pi/2$, then any
 tile has three vertices on the equator and the other vertex is a
 pole. This contradicts against
 the condition ``no inner angle is $\pi$''~(see Section~\ref{sec:basic definitions}).

Assume $\gamma=\fra{\pi}{2}$. See Figure~\ref{fig:cs}. $N$ and $S$ are the poles, and $\angle N v_2 v_1 = \angle S v_1
 v_2=\fra{\pi}{2}=\gamma$. 
Then $\angle N v_1 v_2 = \pi - \angle S v_1 v_2=\pi/2$.
Thus $N v_1 v _2$ is an isosceles triangle.
Hence $\pi - a =N v_1 = N v_2=a$.
This contradicts against
 $a\ne\fra{\pi}{2}$ which we have already proved.
  Hence $\gamma\ne\fra{\pi}{2}$.
 Similarly, $\alpha\ne\fra{\pi}{2}$.

As for equivalence~\eqref{cond:deltaconvex},
 $a\in (0,\, \pi/2)$ if and only if
$v_0$ and $v_2$ are located in the northern hemisphere and $v_1$ is in the
 southern. This is equivalent to $\delta\in (0,\pi)$. 
We will prove the implication~\eqref{assert:alphagamma}. First assume the case where $\gamma$ is too large.
Then, the vertex $v_1$
 is in the northern hemisphere and the edge $v_0 v_1$ crosses to the
 edge $N v_2$. 
 To think of the situation, the leftmost lower tiling in Figure~\ref{fig:af} may be useful. 
 In the critical situation, 
 $\alpha+\gamma+\delta=2\pi$ implies $\alpha=\angle v_1 v_0 N = \pi/2$,
 $\gamma=\fra{3\pi}{2}$, and $\delta=0$. So $\fra{\pi}{2}<\alpha<
 \pi<\gamma <\fra{3\pi}{2}$. The same inequalities with $\alpha$ and
 $\gamma$ swapped follows when $\alpha$ is too
 large. \qed
 \end{proof}

For $\T\in\TRPZ_n$,
let $a$ be the length of the geodesic segment between $N$ and $v_0$, and let $\varphi$ be the
longitude of the vertex $v_2$ minus that of the vertex $v_1$. See Figure~\ref{fig:cs}.

 \begin{definition}\label{def:ani}For $n\ge3$, define open sets $A^{(i)}_n$ in $\Rset^2$
  $(i=1,2,3,4)$ as $\left(\frac{2\pi}{n}-\pi,\,
 0\right)\times\left(0,\,\frac{\pi}{2}\right)$,
$\left(0,\, \frac{2\pi}{n}\right)\times\left(0,\,\frac{\pi}{2}\right)$,
$\left(\frac{2\pi}{n},\, \pi\right) \times \left(0,\,
  \frac{\pi}{2}\right)$, and
$\left(0,\, \frac{2\pi}{n}\right)\times\left(\frac{\pi}2,\, \pi\right)$.
  Let $A_n$ be $\bigcup_{i=1}^4 A_n^{(i)}$. See Figure~\ref{fig:af}.
 \end{definition}

 \begin{figure}[ht]\centering
 \includegraphics[width=5cm,height=5cm]{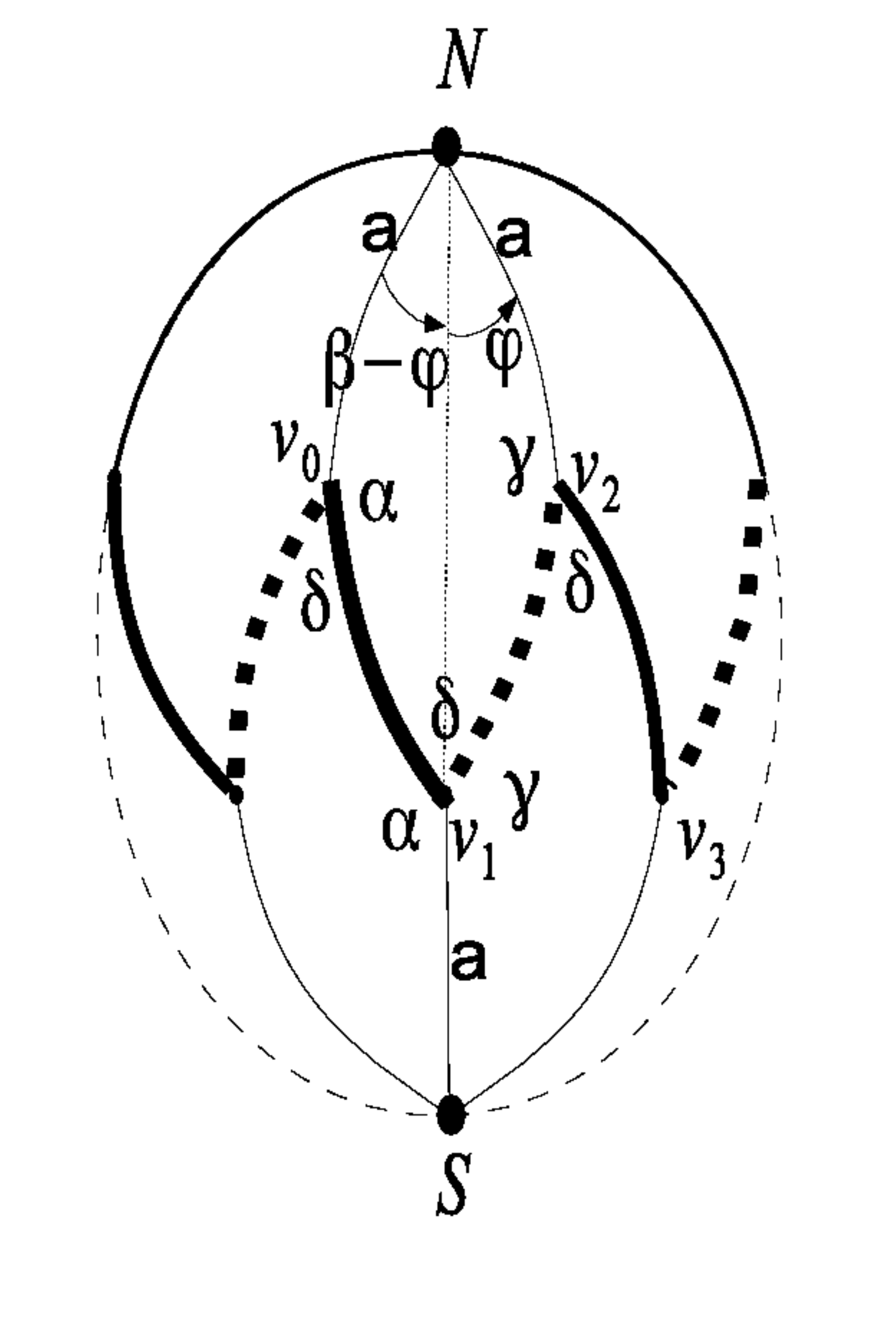}
   \caption{The coordinate system $\langle\varphi,a\rangle$ of a tiling  $\T$ of
$\TRPZ_n$.  See the caption of Figure~\ref{chart of TRPZ}. 
 Possibly $\varphi<0$ and possibly $\varphi>\beta$. Because a straight line from $N$ to
 the antipodal vertex $S$ is through $v_1$, and because $v_1 S=a$, we have $N v_1=\pi -a$ , $\angle v_2 v_1
N=\pi - \gamma$ and  $\angle N v_1 v_0=\pi - \alpha$.
 \label{fig:cs}}
 \end{figure}

\begin{theorem}[A coordinate system of $\TRPZ_n$]
\label{thm:phia}For each integer $n\ge3$, a function
 $\T\in\TRPZ_n\mapsto\coordi{a}{\varphi}{2n}\in A_n$ is a bijection.
\end{theorem}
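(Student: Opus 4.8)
The plan is to realize each tiling through the three non-polar vertices $v_0,v_1,v_2$ of one tile and to read off $\coordi{a}{\varphi}{2n}$ as their spherical coordinates, so that the statement reduces to identifying the admissible range of these coordinates with $A_n$. First I would fix the frame: given $\T\in\TRPZ_n$, identify $N,S$ with the poles. By the $n$-fold axis $\rho$ of rotation through $N,S$ supplied by Theorem~\ref{thm:cube} (for $n=3$) and Theorem~\ref{thm:pn} (for $n\ge4$), the $n$ vertices adjacent to $N$ lie at colatitude $a=Nv_0$, equally spaced in longitude by $\beta=2\pi/n$, while the $n$ vertices adjacent to $S$ lie at colatitude $\pi-a$, because $N,v_1,S$ are collinear on a meridian with $v_1S=a$, as recorded in the caption of Figure~\ref{fig:cs}. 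Hence, after sending $v_0$ to longitude $0$, one has $v_0=(a,0)$, $v_2=(a,\beta)$, and $v_1=(\pi-a,\beta-\varphi)$ in (colatitude, longitude) coordinates; the only free data are $a$ and the longitude $\beta-\varphi$ of $v_1$. Thus $\coordi{a}{\varphi}{2n}$ determines the three points, hence the tile $Nv_0v_1v_2$, hence --- by Fact~\ref{fact:Pquad}~\eqref{assert:bijective} --- the tiling $\T$ up to $SO(3)$. This yields well-definedness of the inverse and injectivity at once.

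The core is to show the image is exactly $A_n$. The single angle $\psi_1=\angle v_0Nv_1$, which equals the longitude $\beta-\varphi$ of $v_1$, splits $\beta$ into $\psi_1=\beta-\varphi$ and $\psi_2=\varphi$; the two triangles $Nv_0v_1$ and $Nv_1v_2$, each carrying the prescribed sides $a$ and $\pi-a$ about $N$, recover $\alpha=\angle Nv_0v_1$ and $\gamma=\angle Nv_2v_1$ respectively. Here the relations $\angle Nv_1v_0=\pi-\alpha$ and $\angle Nv_1v_2=\pi-\gamma$ of Figure~\ref{fig:cs} force $\alpha+\gamma+\delta=2\pi$, so the required area $2\pi/n$ is automatic and no further constraint is imposed; the family is therefore genuinely two-dimensional, as $\coordi{a}{\varphi}{2n}$ suggests.

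Next I would split into cases according to which inner angle, if any, is reflex, invoking Lemma~\ref{lem:shorta}: $a\ne\pi/2$ always; by \eqref{cond:deltaconvex} the sign of $a-\pi/2$ decides whether $\delta<\pi$ or $\delta>\pi$; and by \eqref{assert:alphagamma} a reflex $\alpha$ or $\gamma$ can occur only when $a<\pi/2$ and forces its companion into $(\pi/2,\pi)$. Matching these to the position of the longitude $\beta-\varphi$ of $v_1$ relative to $0$ and $\beta$ produces the four rectangles: $A_n^{(2)}$ is the all-convex case ($\delta<\pi$, $\varphi\in(0,\beta)$), $A_n^{(4)}$ is the reflex-$\delta$ case ($a>\pi/2$, still $\varphi\in(0,\beta)$), while $A_n^{(1)}$ and $A_n^{(3)}$ are the reflex-$\gamma$ and reflex-$\alpha$ cases, in which $v_1$ overshoots $v_2$ (so $\varphi<0$) or lags $v_0$ (so $\varphi>\beta$). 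The limiting angle values $\pi/2,\pi,3\pi/2$ furnished by Lemma~\ref{lem:shorta} supply precisely the open endpoints $\beta-\pi,0,\beta,\pi$ (and $0,\pi/2,\pi$ in $a$), and the excluded walls $\varphi=0,\beta$ and $a=\pi/2$ are exactly the degenerate positions where some inner angle would become $\pi$. This shows the image lies in $A_n$.

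For surjectivity I would run the construction backwards: for any $\coordi{a}{\varphi}{2n}\in A_n$ place $v_0,v_1,v_2$ as above, form $Nv_0v_1v_2$, and verify it is a genuine $\TRPZ_n$-quadrangle --- a simple spherical $4$-gon with inner angles in $(0,\pi)\cup(\pi,2\pi)$, area $2\pi/n$, contained in an open hemisphere --- whereupon Fact~\ref{fact:Pquad} produces a tiling $\T\in\TRPZ_n$ mapping back to it. The main obstacle is exactly this verification in the three concave regimes $A_n^{(1)},A_n^{(3)},A_n^{(4)}$: one must check that the constructed quadrangle does not self-intersect and still fits inside a hemisphere, and that the boundary of $A_n$ is tight, i.e.\ that crossing any wall of a rectangle forces either a vanishing or straight angle or a configuration violating simplicity or the hemisphere condition. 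I expect this to be the delicate part, and I would organize it by the same four cases, reading off the sharp endpoints from Lemma~\ref{lem:shorta}, rather than attempting a single uniform estimate.
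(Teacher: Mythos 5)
Your proposal is correct and follows essentially the same route as the paper's proof: identify $N,S$ with the poles, observe that $\coordi{a}{\varphi}{2n}$ fixes the position of $v_1$ (hence the tile $Nv_0v_1v_2$, hence by Fact~\ref{fact:Pquad}~\eqref{assert:bijective} the tiling, giving injectivity), match the four open rectangles of $A_n$ to the convex/reflex cases via Lemma~\ref{lem:shorta}, and prove surjectivity by reconstructing the quadrangle from an arbitrary point of $A_n$. Two remarks: the rotation axes you import from Theorems~\ref{thm:cube} and~\ref{thm:pn} are heavier than needed, since the definition of $\TRPZ_n$ already places angle $\beta=2\pi/n$ at $N$ in every tile and the poles are identified by the two-congruent-paths argument; and the one step you explicitly defer --- verifying simplicity and non-degeneracy of the reconstructed quadrangle in the concave regimes --- is exactly what the paper carries out, along the lines you indicate, by noting that openness of the rectangles puts $N,v_0,v_1,v_2$ strictly inside a hemisphere (so no edge contains antipodal points), that $\varphi\ne0,\,2\pi/n$ and $a\ne\pi/2$ exclude straight inner angles, and that opposite edges cannot cross because $0<a<\pi/2$ places $v_1$ in the southern hemisphere while $Nv_0,Nv_2$ lie in the northern, whereas $\pi/2<a<\pi$ forces $0<\varphi<2\pi/n$.
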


\begin{proof} We prove  $\coord{a}{\varphi}\in A_n$ for any $\T\in\TRPZ_n$, as
follows: For $\pi/2<a<\pi$, we have $0<\varphi< \beta=\fra{2\pi}{n}$.
Otherwise, we can see that an edge crosses to an opposite edge. 
To think of the situation, see the right upper tiling in Figure~\ref{fig:af}. 

For $0<a<\pi/2$, $\varphi$ is
strictly between $\beta-\pi = 2\pi/n - \pi$ and $\pi$. Otherwise one of the
edge $v_0 v_1$ and the edge $v_1 v_2$ contains a pair of antipodal
points. Obviously $a\ne 0$. By Lemma~\ref{lem:shorta}, $a\ne
 \pi/2$. $\varphi\ne\beta=2\pi/n$, by $\alpha\ne\pi$.

We show that the function $\T\mapsto\coord{a}{\varphi}$ is onto
 $A_n$. Take an arbitrary $\coord{a}{\varphi}$ of $A_n$.  We first construct 
 a quadrangle as follows: Take a point $v_2$ on the sphere such
 that the geodesic segment $v_2 S$ has length $\pi - a$. Since $\varphi$ is
 given and $\beta=2\pi/n$ is known, the vertex $v_1$ and $v_0$ is
 determined, as in Figure~\ref{fig:cs}. 

 When $0<a<\pi$, a pair of antipodal
 points appears neither in the edge $N v_0$ nor in the edge $N
 v_2$.   No inner angle is $\pi$, as $\varphi\ne 0, \fra{2\pi}{n}$ and $a\ne\pi/2$.

We verify no edge contains a pair of antipodal points.  Since $\coord{a}{\varphi}$ is in the union $A_n$ of the four
open rectangles of Figure~\ref{fig:af}, a hemisphere contains all the vertices $v_0, v_1, v_2$ and
the pole $N$ as inner points.  Hence, a pair of
antipodal points appears in neither the edge $v_1 v_2$ nor the edge $v_0 v_1$,
and lengths of the edges $N v_0$ and $N v_2$ are $a<\pi$.

Moreover, any of the four edges of the tile does not cross to the
 opposite edge, because when $0<a<\pi/2$ the
 vertex $v_1$ is located in the southern hemisphere and the edges $N
 v_0$ and $N v_2$ are in the northern hemisphere. On the other hand,
 $\pi/2<a<\pi$ implies $0<\varphi<2\pi/n$.

Arranging the $2n$ copies of the quadrangle as Figure~\ref{fig:af}
 results in a tiling of $\TRPZ_n$. So the function
 $\T\mapsto\coord{a}{\varphi}$ is onto $A_n$. $\pi - a$ is the distance
 of the vertex $v_1$ from the pole $N$ while $2\pi/n - \varphi$ is the
 longitude of $v_1$, i.e., $\angle  v_0 N v_1$. So $\T\mapsto\coord{a}{\varphi}$ is injective.  Hence
 Theorem~\ref{thm:phia} is proved.\qed
\end{proof}

\begin{figure}[ht]\centering
    \begin{tikzpicture}[axis/.style={very thick, ->, >=stealth'}]
     \def\acs{-.71}
    \node at (-2.6,1.3)
    {\pgftext{\includegraphics[scale=.1]{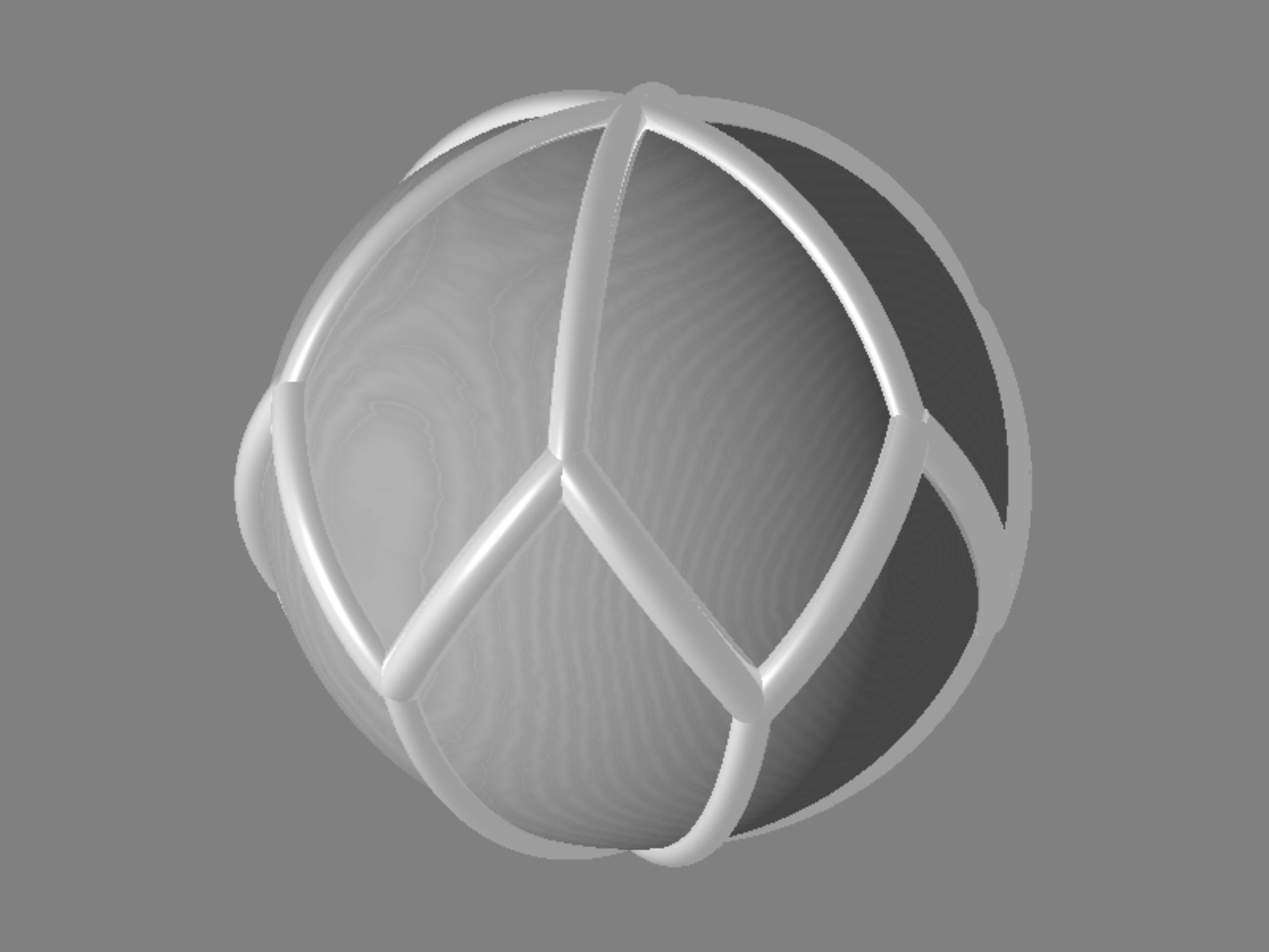}}};
    \node at (-2.5,1.8) {$\beta$};
    \node at (-2.6,1.4) {$\alpha$};
    \node at (-2.2,1.4) {$\gamma$};
    \node at (-2.3,1.1) {$\delta$};
    \draw[<-] (-.1,\acs) -- (-1.2,1.3);
    \node at (-.1,\acs) {$\circ$};
    \node at (2.4,1.3)
    {\pgftext{\includegraphics[scale=.1]{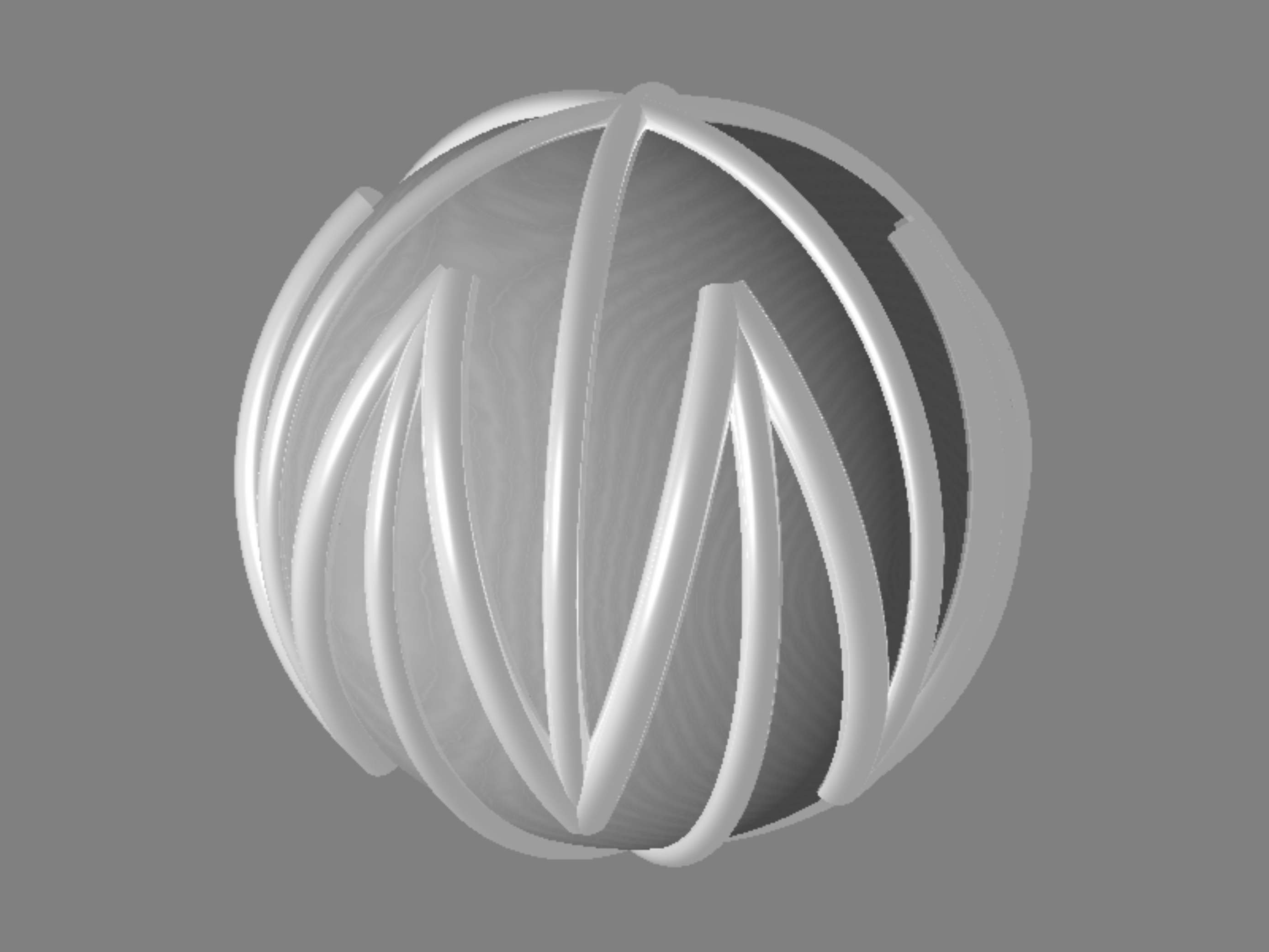}}};
    \node at (2.4,1.9) {$\beta$};
    \node at (2.4,1.3) {$\alpha$};
    \node at (2.9,1.4) {$\gamma$};
    \node at (2.6,1.8) {$\delta$};

    \draw[<-] (-.1,0.65) -- (1,1.3);
    \draw[dotted, thick] (-.7,0.65) -- (-.1,0.65);
    \node at (-.1, 0.65) {$\times$};
    \draw[dotted, thick] (-.1,0.65) -- (-.1,-2.2);
    \node at (-.1,-2.5) {$\frac{\pi}{n}$};
    
    \node at (-1.9,-1.2) {$A^{(1)}_n$};
    \node at (0,-1.2) {\Large $A^{(2)}_n$};

        \draw[dashed,line width=1mm] (.5,-2.2) -- (.5,1.6) --(-.8,1.6)
    -- (-.8,-2.2); 

    \node at (-1,1.6) {\Large $\pi$};
    
     \draw[dashed, line width=1mm] (2.8,-2.2) -- (-3.1,-2.2) --    (-3.1,-.3)-- (2.8,-.3)--cycle;
    
    \node at (-4.6,-.9)
    {\pgftext{\includegraphics[scale=0.1]{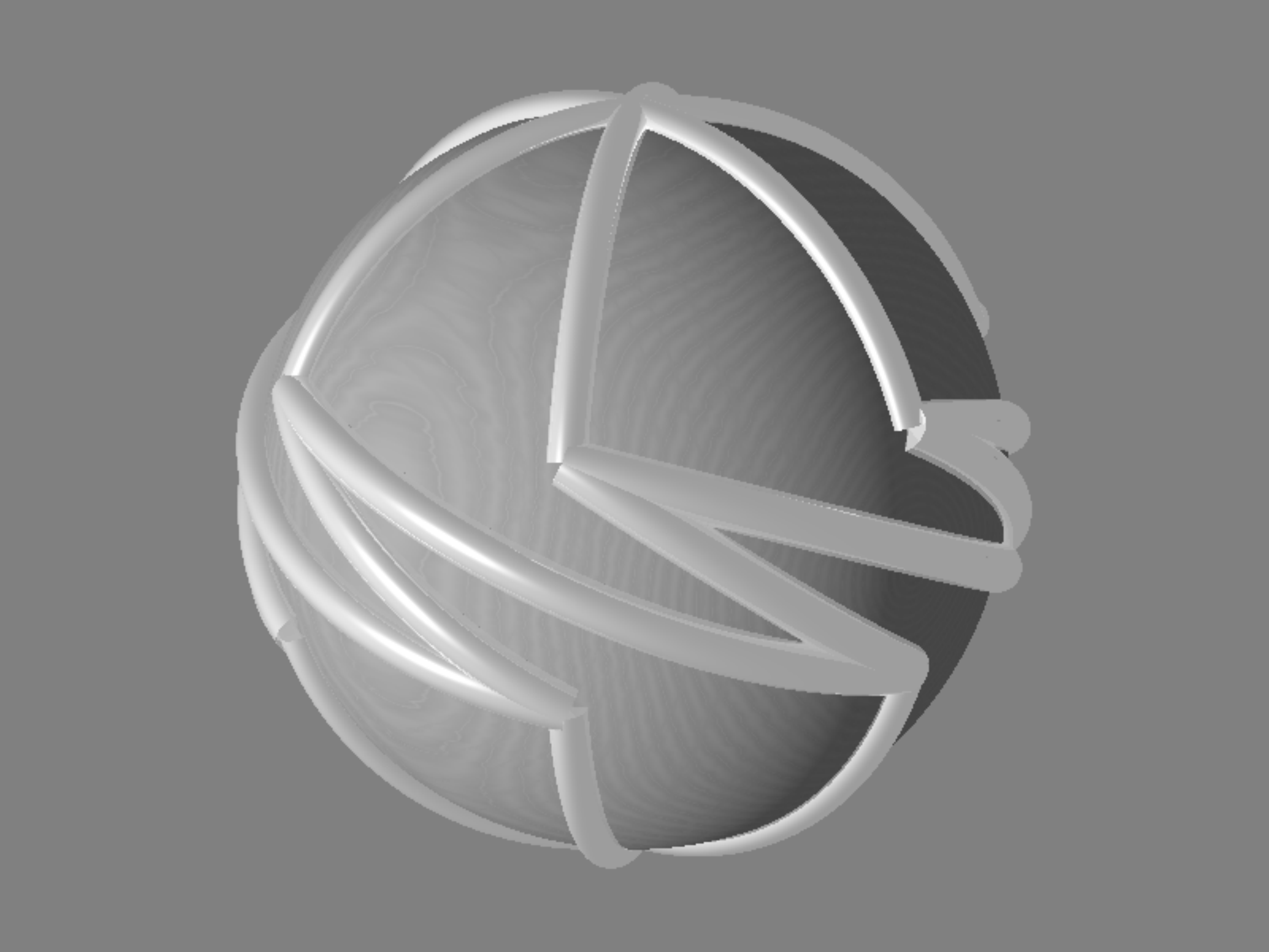}}};
    \draw[->] (-3.5,-1.2) -- (-1.9,\acs);
    \draw[dotted,thick]     (-1.9,\acs) -- (-1.9,-2.2);
    \node at (4.5,-.9) {\pgftext{\includegraphics[scale=0.1]{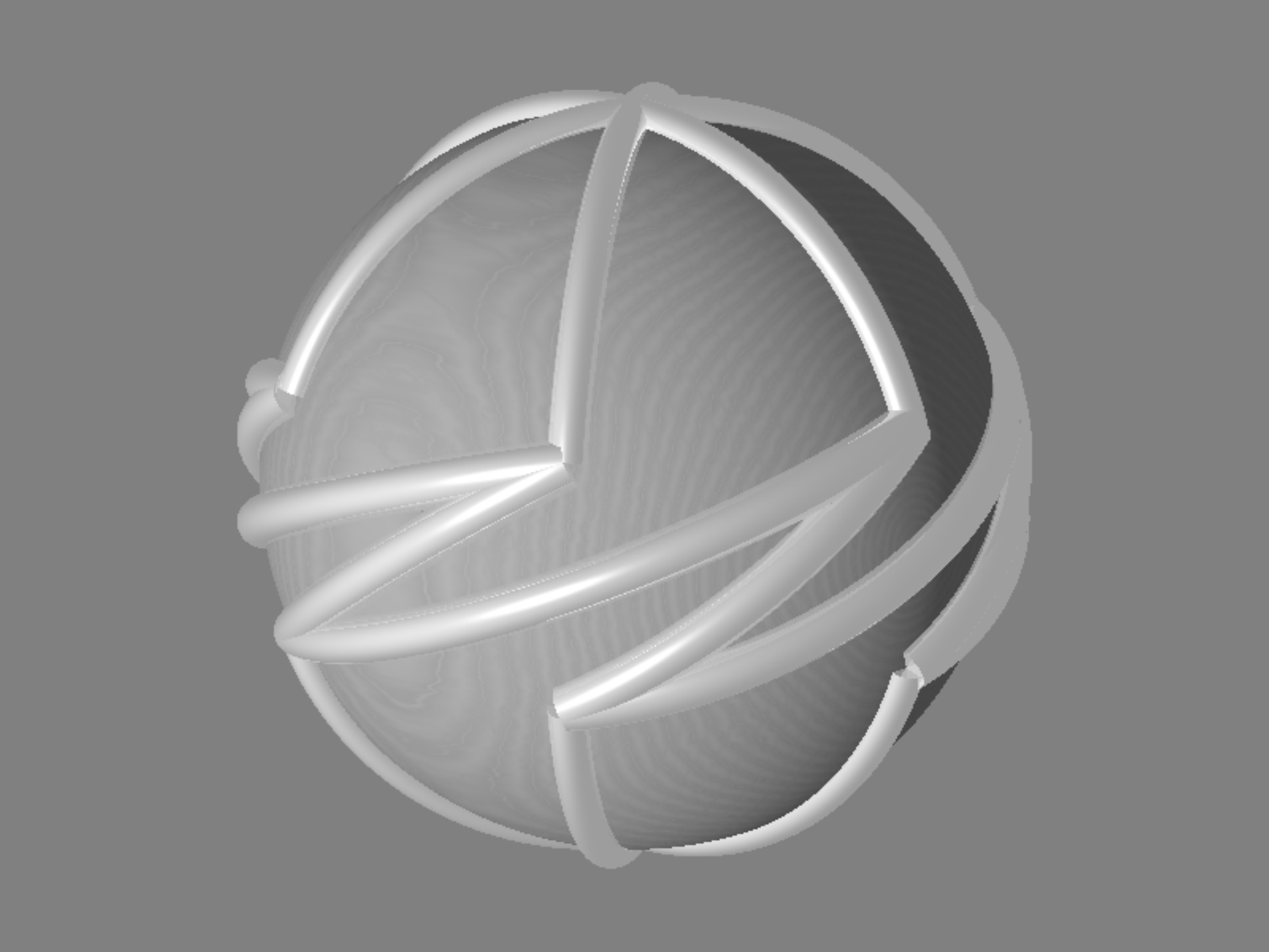}}};

    \node at (-4.6, .1) {$N$};
    \node at (-5.2,-.69) {$v_0$};
    \node at (-4.9,-.9) {$\gamma$};  \node at (-4.6,-.83) {$v_2$};
    \node at (-3.9,-1.4) {$v_1$};
    \node at (4.5,-.9) {$\alpha$};
    \draw[->] (3.1,-1.2) -- (1.7,\acs);
    \node at (-1.5+.6,\acs-.1) {$\arccos\frac{1}{3}$};
    \draw[dotted, thick] (-1.9,\acs) -- (1.7,\acs);
    \node at (-1.9,-2.5) {$-\frac{\pi}{3}$};
    \draw[dotted,thick] (1.7,\acs) -- (1.7,-2.2);
    \node at (1.7,-2.5) {\Large $\frac{2\pi}{3}$};
    \node at (-1,0) {\Large $\frac{\pi}{2}$}; 
    
    \node at (-3.1,-2.5) {$\frac{2\pi}{n}-\pi$};
    \node at (-.7,-2.5) {$0$};
    \node at (.5,-2.5) {$\frac{2\pi}{n}$};
    \node at (2.8,-2.5) {$\pi$};

    \node at (2,-1.2) {\Large $A^{(3)}_n$};
    \node at (0,1.2) {\Large $A^{(4)}_n$};

\draw[axis] (-6, -2.2) -- (5.5, -2.2) node(xline)[right] {$\varphi$ };
    \draw[axis] (-.8, -2) -- (-.8, 3) node(yline)[above] {$a$};

    \end{tikzpicture}
 \caption{The open set $A_n$~($n=6$)~(Definition~\ref{def:ani}) of $\coordi{a}{\varphi}{12}$~(Figure~\ref{fig:cs}) of
 $\T\in\TRPZ_n$.  $A_n$ bijectively corresponds to $\TRPZ_n$~(Theorem~\ref{thm:phia}). The four
 images are tilings of $\TRPZ_n$. 
   \label{fig:af} }
 \end{figure}

\section{Quadratic equation of tiles}\label{sec:q}

\begin{theorem} \label{thm:existence Pn} Suppose $n\ge3$, $\alpha,\gamma\in
 (0,\,\pi/2)\cup (\pi/2,\, \pi)\cup(\pi,\, 3\pi/2)$, $a\in
 (0,\,\pi/2)\cup (\pi/2,\, \pi)$. Then
a quadrangle is an $\TRPZ_n$-quadrangle $\PQ_{n,\alpha,\gamma,a}$, if and only if
 $f_{n,\alpha,\gamma}(\cos a)=0$ where
\begin{align*}
 f_{n,\alpha,\gamma}(x)&:=  x^2 - \left(\cot\frac{\pi}{n}\right) (\cot \alpha + \cot \gamma ) x
- \cot \alpha \cot \gamma .  
\end{align*}
\end{theorem}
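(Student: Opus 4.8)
The plan is to cut $T$ along its diagonal $N v_1$, the geodesic segment $\beta\delta$ of length $\pi-a$ (Figure~\ref{fig:4gon}), obtaining two spherical triangles $N v_0 v_1$ and $N v_1 v_2$, to extract one clean identity from each, and then to impose the single closure condition at the pole $N$. Write $\beta_1=\angle v_0 N v_1$ and $\beta_2=\angle v_1 N v_2$. In $N v_0 v_1$ the two sides from $N$ are $N v_0=a$ and $N v_1=\pi-a$, and the base angles are supplementary, $\angle N v_1 v_0=\pi-\alpha$, as recorded in the caption of Figure~\ref{fig:cs}. Applying the dual cosine law (Proposition~\ref{prop:vinberg}~\eqref{assert:scla}) to this angle, which is opposite the side $a$, gives $-\cos\alpha=-\cos\beta_1\cos\alpha+\sin\beta_1\sin\alpha\cos a$; collecting terms and using $\sin\beta_1/(\cos\beta_1-1)=-\cot(\beta_1/2)$ this reduces to
\[
\cot\frac{\beta_1}{2}=-\frac{\cot\alpha}{\cos a}.
\]
The identical computation in $N v_1 v_2$, where $\angle N v_1 v_2=\pi-\gamma$, yields $\cot(\beta_2/2)=-\cot\gamma/\cos a$.

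Next I would impose that the two triangles glue to a genuine $\PQ_{n,\alpha,\gamma,a}$. In the generic (convex) configuration the diagonal lies inside $T$, so the pole angle is $\beta_1+\beta_2=\beta=2\pi/n$, i.e.\ $\beta_1/2+\beta_2/2=\pi/n$. Feeding the two displayed values into $\cot(x+y)=(\cot x\cot y-1)/(\cot x+\cot y)$, equating the result to $\cot(\pi/n)$, and clearing denominators collapses to exactly
\[
\cos^2 a-\cot\tfrac{\pi}{n}(\cot\alpha+\cot\gamma)\cos a-\cot\alpha\cot\gamma=0,
\]
which is $f_{n,\alpha,\gamma}(\cos a)=0$. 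Conversely, if $f_{n,\alpha,\gamma}(\cos a)=0$ under the stated ranges, I would reverse the chain: the two displayed identities determine admissible half-angles $\beta_1/2,\beta_2/2\in(0,\pi/2)$, the equation forces $\cot(\beta_1/2+\beta_2/2)=\cot(\pi/n)$ and hence $\beta_1/2+\beta_2/2=\pi/n$ by injectivity of $\cot$ on $(0,\pi)$, and Fact~\ref{fact:q} (at most one such quadrangle) with Fact~\ref{fact:Pquad} lets me assemble the triangles into $\PQ_{n,\alpha,\gamma,a}$. The supplementary relations $\angle N v_1 v_0=\pi-\alpha$, $\angle N v_1 v_2=\pi-\gamma$ then automatically give $\delta=\delta_1+\delta_2=2\pi-\alpha-\gamma$, so the angle data of Figure~\ref{chart of TRPZ} hold.

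The step I expect to cost the most care is not the trigonometric identity but the sign bookkeeping when $T$ is concave, i.e.\ when $\alpha$ or $\gamma$ exceeds $\pi$. By Lemma~\ref{lem:shorta}~\eqref{assert:alphagamma} at most one of them is reflex, with its range pinned to $(\pi,3\pi/2)$ and the other in $(\pi/2,\pi)$; then the diagonal $N v_1$ falls outside $T$, the interior angle at $v_0$ is the reflex $\alpha$ while the triangle angle there is $2\pi-\alpha$ (so $\cot$ flips sign, giving $\cot(\beta_1/2)=\cot\alpha/\cos a$), and the pole angle becomes the difference $\beta_2-\beta_1=2\pi/n$. I would therefore redo the final step with $\cot(\beta_2/2-\beta_1/2)=\cot(\pi/n)$ and the sign-flipped value, and check that the two reversals cancel so that the \emph{same} quadratic $f_{n,\alpha,\gamma}(\cos a)=0$ results; an analogous verification covers $a>\pi/2$, where $\delta>\pi$ by Lemma~\ref{lem:shorta}~\eqref{cond:deltaconvex} but the diagonal still emanates from the reflex vertex $v_1$, keeping the sum $\beta_1+\beta_2$. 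Finally I would confirm, via the range hypotheses and Vinberg's criterion (Proposition~\ref{prop:vinberg}~\eqref{assert:vinberg}), that both triangles are non-degenerate, so the reconstructed $\PQ_{n,\alpha,\gamma,a}$ is a bona fide quadrangle.
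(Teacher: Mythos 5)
Your forward direction is correct, and it is a genuinely leaner route through the same decomposition. The paper also cuts along $N v_1$ and extracts the two dual-cosine-law identities (its equations \eqref{eq:u} and \eqref{eq:v}), but then reduces the system under $\varphi+\varphi'=2\pi/n$ via a planar apparatus: the lines $\ell,m$, the reflection-rotation $R$, intersections with the unit circle, the angles $u,v$ of \eqref{ddag} and \eqref{grad:u}, and a three-case range analysis (Cases A--C) whose whole purpose is to rule out spurious $\pi$-shift solutions of $\tan(v-u)=\tan(\pi/n)$. Your half-angle identities are literally the same data in better coordinates: since $u=(\varphi+\pi)/2$, the paper's $\tan u=(\tan\gamma\cos a)^{-1}$ is exactly your $\cot(\beta_2/2)=-\cot\gamma/\cos a$. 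Passing to $\cot$ of half-angles confines the angles to intervals where $\cot$ is injective, so the cotangent addition formula replaces Cases A--C; and your reflex bookkeeping checks out --- with triangle angle $2\pi-\alpha$ at $v_0$ and closure $\beta_2-\beta_1=2\pi/n$, the two sign reversals cancel and the same quadratic results. What remains asserted rather than proved in this direction is the configuration dichotomy itself (that $\alpha>\pi$ is exactly the case where the diagonal direction at $N$ exits across the ray $N v_0$, forcing the difference closure, while all convex cases --- including $\alpha<\pi/2<\gamma<\pi$, which you cannot yet exclude since the classification comes later --- give the sum closure); that is a fixable but real omission, corresponding to the care the paper spends in Claim~\ref{claim:P'Q'} and Cases A--C.

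The genuine gap is in your converse, which you frame as constructing $\PQ_{n,\alpha,\gamma,a}$ from $f_{n,\alpha,\gamma}(\cos a)=0$. The equation alone cannot imply existence: $f_{n,\alpha,\gamma}$ depends on $\alpha,\gamma$ only through $\cot\alpha,\cot\gamma$, hence is invariant under $\alpha\mapsto\alpha\pm\pi$, and for instance with $n=3$, $\alpha=\gamma=5\pi/4$ (allowed by the hypotheses) one computes $f_{n,\alpha,\gamma}(0)<0<f_{n,\alpha,\gamma}(-1)$, so $f$ vanishes for some $a\in(\pi/2,\pi)$ although $\delta=2\pi-\alpha-\gamma<0$ and no quadrangle with this angle list exists. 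Your appeal to Lemma~\ref{lem:shorta}~\eqref{assert:alphagamma} to exclude two reflex angles is circular here, since that lemma concerns quadrangles already known to be $\TRPZ_n$-quadrangles. The theorem must be read as the paper proves it --- ``Assume we are given a quadrangle $N v_0 v_1 v_2$'' --- with the quadrangle presupposed and $f=0$ characterizing only the compatibility $\varphi+\varphi'=2\pi/n$; genuine existence for given $(n,\alpha,\gamma)$ is established separately, via Vinberg's criterion (Proposition~\ref{prop:vinberg}~\eqref{assert:vinberg}), in Theorem~\ref{thm:phase1} and Lemma~\ref{lem:concavePn}. The same $\pi$-periodicity also means the equation cannot by itself certify that your glued triangles realize the reflex angle $\alpha$ rather than $\alpha-\pi$ at $v_0$, nor select the correct one of the two intersection points $P,P'$ in Fact~\ref{fact:q}; the unproved range assertions ($\beta_1/2,\beta_2/2\in(0,\pi/2)$, reflex exactly when the difference closure holds) are where that certification must be supplied. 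Under the correct reading, and with those ranges justified, your method does yield a complete and appreciably shorter proof.
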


    \begin{proof} Assume we are given a quadrangle $N v_0 v _1 v_2$.
By our definition of quadrangles~(see Section~\ref{sec:basic definitions}),  the
    quadrangle is a subset of the interior of an hemisphere. So,
      $N v_0 v_1$ and
     $N v_1 v_2$ are spherical 3-gons.  Let $\varphi$ be the angle from
     a geodesic segment $N v_1$ to the edge $N v_2$ and $\varphi'$ be
     the angle from the edge $N v_0$ to the geodesic segment $N v_1$.

    The given quadrangle is $\PQ_{n,\alpha,\gamma,a}$, if and only if there are $\varphi$ and $\varphi'$ such that
      \begin{align}
&      \varphi+\varphi'=\frac{2\pi}{n},\ \varphi\ne0,\ \varphi'\ne0\label{eq:w}\\
&\cos\gamma= \cos \varphi \cos\gamma  - \sin \varphi  \sin \gamma  \cos a,\  \mbox{and}\label{eq:u}\\
&\cos\alpha= \cos  \varphi' \cos\alpha - \sin  \varphi' \sin  \alpha  \cos
      a.\label{eq:v}\end{align}
Two equations~\eqref{eq:u} and \eqref{eq:v} are equivalent to
spherical cosine theorems for angles~(Proposition~\ref{prop:vinberg}~\eqref{assert:scla})
 to spherical 3-gons.  It is
  because of applying the last two equations $\angle v_2 v_1
 N=\pi - \gamma$ and  $\angle N v_1 v_0=\pi - \alpha$ in the
  caption of Figure~\ref{fig:cs}.

    In the $xy$-plane, consider two lines
  \begin{align*}
   \ell:\ x - y\tan \gamma \cos a = 1, \qquad m:\ x - y \tan\alpha \cos
   a = 1.
  \end{align*}
  They are well-defined, by the premise. Then
  \begin{align*}
(*)\qquad   \eqref{eq:u} \iff (\cos\varphi,\, \sin\varphi)\in \ell,\qquad
   \eqref{eq:v} \iff (\cos\varphi',\, \sin\varphi')\in m. 
  \end{align*}
Let $R$ be  the reflection with respect to the $x$-axis  followed by rotation in $2\pi/n$ around the origin $O$. 
Then,  \eqref{eq:w} implies
$\eqref{eq:v} \iff (\cos\varphi,\, \sin\varphi)\in \Refl{m}$.
  To sum up, under equation~\eqref{eq:w},
  \begin{align}
\label{ctldag}\qquad  \eqref{eq:u}\ \&\ \eqref{eq:v}\ \iff\ (\cos\varphi,\, \sin\varphi)\in \ell \cap \Refl m.
  \end{align}
  Let $P$ be a point $(1,\, 0)$ and $C$ be the unit circle
  $x^2+y^2=1$. 
  \begin{claim}
\label{claim:P'Q'}
  \begin{enumerate}
   \item \label{claim:P'Q'1}For all
	 $\alpha,\gamma$, there is
	 a unique point $P'\in C\cap
	 \ell  \setminus\{P\}$. Moreover $P'=(\cos\varphi,\, \sin\varphi)$.
	 
   \item \label{claim:P'Q'2}For all
	 $\alpha,\gamma$, there is a  point $Q'\in C\cap
	 m  \setminus\{P\}$. Moreover $Q'=(\cos\varphi',\, \sin\varphi')$ and $\angle POQ'=\varphi'$.
\end{enumerate}
  \end{claim}
  \begin{proof} \eqref{claim:P'Q'1}. 
By the premise, $\varphi\ne0$. So $P'$ is unique.
   By equivalence~($\ast$), $\varphi=\angle POP'$.
\eqref{claim:P'Q'2}.  Similar to \eqref{claim:P'Q'1}.  \qed
  \end{proof}

Let $S$ be a point on the $x$-axis in the $xy$-plane. The ray starting
     from $S$ in the direction of the positive part of $x$-axis is
     denoted by $xS$ or $Sx$.
  The sum of the three inner angles of the plane triangle $O P P'$ is $\pi$. So,
\begin{align}  
\label{ddag} u:=\angle x P P'= 
 \frac{\varphi+\pi}{2}.
\end{align}

  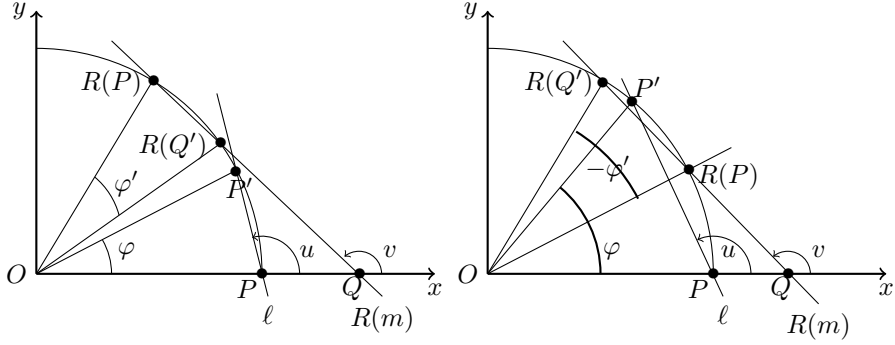
\begin{figure}[ht]
   \centering
      \begin{tikzpicture}
       \begin{scope}[xshift=-3cm]
	\draw (1.1,0.78) arc [start angle=15, end angle=50, radius=1];
      \node at (1.2,1.2) {$\varphi'$};
	\draw (1,0) arc [start angle=0, end angle=30, radius=0.9];
      \node at (1.2,0.3) {$\varphi$};
     \draw (3,0) arc [start angle=0, end angle=90,radius=3];
     \draw[thick, ->] (0,0) -- (5.3,0) node [below] {$x$};
     \draw[thick, ->] (0,0) -- (0,3.5) node [left] {$y$};
      \draw (0,0) -- (1.53,2.54) node [left] {$\Refl{P}$};
      \node at (1.56,2.56) {$\bullet$};
      \draw (1,3.1) -- (4.6,-.3) node [below]    {$\Refl{m}$};
      \node at (4.2,-0.2) {$Q$};
      \node at (4.3,0) {$\bullet$};
     \node at (4.7,0.3) {$v$};
      \draw[->] (4.59,0) arc [start angle=0, end angle=130,radius=0.3];
      \draw (2.4, 2.4) -- (3.08,-.3) node [below] {$\ell$};
      \node at (3,0) {$\bullet$};
      \node at (2.8,-0.2) {$P$};
      \node at (3.6,0.3) {$u$};
      \draw[->] (3.5,0) arc [start angle=0, end angle=105,radius=0.5];
     \draw (0,0) -- (2.4, 1.7) node [left]
      {$\Refl{Q'}$};
      \node at (2.45,1.74) {$\bullet$};
      \draw (0,0) -- (2.7, 1.4) node [below] {$P'$};
      \node at (2.65,1.35) {$\bullet$};
      \draw (0,0) node [left] {$O$} ;
      \end{scope}
       \begin{scope}[xshift=3cm]
	\draw[thick] (2,1) arc [start angle=27.4, end angle=58,	radius=2.3];
      \node at (1.6,1.4) {$-\varphi'$};
	\draw[thick] (1.5,0) arc [start angle=0, end angle=50, radius=1.5];
      \node at (1.65,0.3) {$\varphi$};
     \draw (3,0) arc [start angle=0, end angle=90,radius=3];
     \draw[thick, ->] (0,0) -- (5.3,0) node [below] {$x$};
	\draw[thick, ->] (0,0) -- (0,3.5) node [left] {$y$};
	\def\rqqx{1.53}
	\def\rqqy{2.54}
      \draw (0,0) -- (1.53,2.54) node [left] {$\Refl{Q'}$};
      \node at (1.53,2.54) {$\bullet$};
      \draw (1,3.1) -- (4.4,-.4) node [below]    {$\Refl{m}$}; 
      \node at (3.9,-0.2) {$Q$};
      \node at (4,0) {$\bullet$};
     \node at (4.4,0.3) {$v$};
      \draw[->] (4.29,0) arc [start angle=0, end angle=130,radius=0.3];
      \draw (1.78, 2.6) -- (3.13,-.3) node [below] {$\ell$};
      \node at (3,0) {$\bullet$};
      \node at (2.8,-0.2) {$P$};
      \node at (3.2,0.3) {$u$};
      \draw[->] (3.5,0) arc [start angle=0, end angle=115,radius=0.5];
	\def\ax{1.92}
	\def\ay{2.29}
	\draw (0,0) -- (\ax, \ay);
	\node at (\ax+.2,\ay+0.2) {$P'$};
	\node at (\ax,\ay) {$\bullet$};
	\def\ppx{2.7}
	\def\ppy{1.4}
	\draw (0,0) -- (1.2*\ppx, 1.2*\ppy);
	\node at (\ppx+.5,\ppy-.1) {$\Refl{P}$};
	\node at (2.68,1.38) {$\bullet$};
      \draw (0,0) node [left] {$O$} ;
      \end{scope}
      \end{tikzpicture}   \caption{\label{FigA} Proofs of \eqref{ddag} and \eqref{A}. Case
   $\varphi'>0$ (left) and case $\varphi'<0$ (right).}
  \end{figure}
The line $\Refl{m}$ is not the $x$-axis. It is because $\Refl{P}\in C\cap \Refl{m}\setminus
  (\Rset\times\{0\})$ by
  $\angle x O \Refl{P}= 2\pi/n$. Hence,
  $\#(\Refl{m}\cap(\Rset\times\{0\}))\le 1$.
Let a point $Q$ be the intersection of the line $\Refl{m}$ and $x$-axis, if it exists.
  Define
  \begin{align*}
v:=\begin{cases} \pi& (\Refl{m}\cap
			     (\Rset\times\{0\})=\emptyset);\\
			     \angle x Q \Refl{P}&(\mbox{otherwise}).
			    \end{cases}   
  \end{align*}
See Figure~\ref{FigA}.

\begin{claim}\label{claim:uv}If
 equation~\eqref{eq:w} holds and
 $\pi/n\le\varphi<\pi$, then
 \begin{align}
&\tan u = (\tan \gamma  \cos a )^{-1}.\qquad
					      \tan v = \frac{ \sin {\frac {2\pi}{n}} \sin  \alpha \cos  a -\cos\frac {2\pi}{n} \cos  \alpha }{ \cos \frac {2\pi }{n} \sin \alpha \cos a  
 +\sin {\frac {2\pi }{n}}  \cos \alpha}.   
  \label{grad:u}	\\
&\eqref{eq:u}\ \&\ \eqref{eq:v} \iff f_{n,\alpha,\gamma}(\cos a) =0.\label{assert:v-u}	\end{align}
\end{claim}

 \begin{proof}
  \eqref{grad:u} The first equation is by the definition of $\ell$ and Claim~\ref{claim:P'Q'}.
  The denominator of the left-hand side of the second equation is not
  zero, by Claim~\ref{claim:P'Q'}~\eqref{claim:P'Q'2}.

Next, we  prove
\begin{align*}
	 v  =	\begin{cases}
 \frac{2\pi}{n} + \pi - \arctan\left( (\tan \alpha \cos a)^{-1}\right)
		 &\left(\frac{\pi}{n} \le \varphi <
		 \frac{n-2}{n}\pi\right);\\
		 \pi &\left(\varphi=\frac{n-2}{n}\pi\right);\\
 \frac{2\pi}{n}  - \arctan\left( (\tan \alpha \cos a)^{-1}\right)  &\left(\frac{n-2}{n}\pi < \varphi <	 \pi\right).		 %
	\end{cases}
\end{align*}The proof is as follows:
  Suppose $\pi/n\le \varphi<(n-2)\pi/n$.
Let a point $\overline{Q'}$ be the
  reflection of the point $Q'$ with respect to the $x$-axis. $\angle x P
  \overline{Q'}= \pi - \arctan \left((\tan\alpha\cos
  a)^{-1}\right)$. Thus $v=\angle x P \overline{Q'}+2\pi/n$.
Suppose $\varphi=(n-2)\pi/{n}$. By
  equation~\eqref{eq:w}, $\varphi'=(4-n)\pi/n$. By the definition,
  $\angle x O \Refl{Q'}=\angle x O \Refl{P}+\varphi'=\varphi+\varphi'$, which is
  $2\pi/n$ by \eqref{eq:w}. Hence, $(\angle x O \Refl{P} + \angle x O
  \Refl{Q'})/2 = \pi/2$. As $\Refl{P}, \Refl{Q'}\in C$, $\Refl{m}$ does not intersect
  with the $x$-axis. Hence $v=\pi$ by the definition of $v$.
  The proof for  $(n-2)\pi/n<\varphi\le \pi$ is similar to the proof for
  $\pi/n\le \varphi<(n-2)\pi/n$. This establishes the desired
representation of $v$.

If $\varphi\ne(n-2)\pi/n$, then by the addition formula of $\tan$, $\tan v$
  is as desired.
Consider the case $\varphi=(n-2)\pi/n$. Then $\varphi'=(4-n)\pi/n$. By
  \eqref{eq:v}, $(\tan\alpha \cos a)^{-1} = \tan (2\pi/n)$. By the addition formula of $\tan$, $\tan v$
  is as desired.
  This completes the proof of equation~\eqref{grad:u} of Claim~\ref{claim:uv}.

  \medskip
  \eqref{assert:v-u} First we claim
 \begin{align}
\label{hash}\quad \ell\cap\Refl m\ni(\cos\varphi,\,\sin\varphi)\iff \Refl{Q'}=P'.  
 \end{align}
The proof is as follows: $(\cos\varphi,\,\sin\varphi)=P'$ is $\Refl \P$ or
 $\Refl{Q'}$, because $m\cap C=\{P,Q'\}$ by Claim~\ref{claim:P'Q'}~\eqref{claim:P'Q'2}. Here $\Refl \P$ is $(\cos(2\pi/n),\, \sin(2\pi/n))$. If
 $\Refl \P=\Q$, then $2\pi/n=\varphi$, and thus $\varphi'=2\pi/n -
  \varphi=0$, by equation~\eqref{eq:w}. This is a contradiction. This completes
  the proof of \eqref{hash}.

  Next we claim
  \begin{align}
\label{dhash}\quad  \tan(v - u)=\tan\frac{\pi}{n} \iff f_{n,\alpha,\gamma}(\cos a)=0.
  \end{align}
  The left-hand side of equivalence~\eqref{dhash} is
\begin{align*}
 \frac{\tan u -\tan v}{1+\tan u\tan v } + \tan \frac{\pi}{n}=0. 
\end{align*}
Observe that the denominator of the first term of the left-hand side
  cannot be $0$. Assume  otherwise. Then  $u-v= \pi/2 +i\pi$ for some
  integer $i$. Thus $\angle P
  P' (\Refl{P})=\pi/2 + i \pi$  for some
  integer $i$. Thus $\varphi+\varphi'=\pi$, which contradicts
  against equation~\eqref{eq:w}. Hence the denominator $1+\tan u \tan v$
  of the first
  term of the left-hand side is not $0$. Also note that
  the denominator $\cos\pi/n$ of the second term of the left-hand side
  is not $0$. 
  Substitute \eqref{grad:u} in 
  the left-hand side. Then we have a quadratic equation
  of $\cos a$, by calculation.
  Because $\sin\alpha \sin\gamma\sin(\pi/n)\ne
  0$, the quadratic equation is equivalent to the quadratic equation
  $f_{n,\alpha,\gamma}(x)=0$ of $x=\cos a$. This completes the proof of \eqref{dhash}.

Hence,  by equivalences~\eqref{ctldag}, \eqref{hash}, and \eqref{dhash},
  we have only to prove
  \begin{align}
   \Refl{Q'}=P' \iff \tan(v-u)=\tan\frac{\pi}{n},\label{equiv:tan}  \end{align}
to show \eqref{assert:v-u}.
If $\Refl{Q'}=\Q$, then $\angle P O \Refl{Q'}= \angle P O P'=\varphi$. Thus $v-u=(\varphi+\varphi')/2 + k\pi$
 for some integer $k$. Equation~\eqref{eq:w} implies $\tan(v-u)=\tan(\pi/n)$.
  To prove the converse of \eqref{equiv:tan}, we  derive
  \begin{align*}
\tan(v-u)=\tan(\pi/n)\implies   \angle P O \Refl{Q'} = \varphi.
  \end{align*}

\medskip
  Case~A. $\pi/n \le \varphi <(n - 2)\pi/n$ (See Figure~\ref{FigA}).
  
The mean $M$ of $\angle x O \Refl{P}=2\pi/n$ and $\angle x O \Refl{Q'} = 2\pi/n -
  \varphi'$ is $2\pi/n  - \varphi'/2 = \pi/n + \varphi/2$ by equation~\eqref{eq:w}. 
  Then $M<\pi/2$, by $\varphi < (n - 2)\pi/n$. 
  Therefore, $\Refl{m}\cap ( (0,\,\infty)\times \{0\})$ consists of a
  unique point $Q$, where $\Refl{m}$ is a line
  through the two points $\Refl{P}$ and
  $\Refl{Q'}$.  We claim
   \begin{align}
\label{A} \frac{\pi}{n}\le \varphi<\frac{n-2}{n}\pi \implies
    v-u=\frac{\pi}{n} - \varphi + \angle PO\Refl{Q'} \in
    \left(\frac{4-n}{2n}\pi,\, \frac{\pi}{2}\right).
   \end{align}
  The proof is as follows: Observe $v=(\varphi'+\pi)/2 + \angle PO\Refl{Q'}$. It is clear
 when $\varphi'>0$. In case $\varphi'<0$, the observation follows from $v=(-\varphi'+\pi)/2 + (\angle
  PO\Refl{Q'} + \varphi')$.
  By equation~\eqref{grad:u}, $v>\pi/2 + 2\pi/n$. Clearly,
 $v<\pi$.  Hence, by $u\in (\pi/2,\,
  \pi) $, $v-u$ is in the desired
 interval.  From equations~\eqref{ddag} and \eqref{eq:w}, the desired equation of \eqref{A} follows. This completes the proof of \eqref{A}.

 Assume $\tan(v-u)=\tan(\pi/n)$. By \eqref{A}, $\varphi=\angle P O \Refl{Q'}$.

 \medskip
 Case~B. $(n - 2)\pi/n < \varphi <\pi$ (See Figure~\ref{FigB}). 
 
    \begin{figure}[ht]
   \centering
      \begin{tikzpicture}
       \begin{scope}[xshift=-3cm,scale=0.95]
	\node at (-1.1,1.4) {$-\varphi'$};
	\draw (1.3,.72) arc [start angle=27, end angle=177,radius=1.4];
     \draw (3,0) arc [start angle=0, end angle=180,radius=3];
     \draw[thick, ->] (-6,0) -- (4,0) node [below] {$x$};
     \draw[thick, ->] (0,0) -- (0,3.5) node [left] {$y$};
      \draw (2.7,1.4) -- (-6,-.2) node [below]    {$\Refl{m}$};
      \node at (-5,-0.3) {$Q$};
	\node at (-5,0) {$\bullet$};
	\draw[->] (-1.8,0) arc [start angle=0, end angle=16,radius=1.9];
	\node at (-1.65,0.35) {$v$};
	\draw (0,0) -- (-2.95,0.33);
	\node at (-3.5,0.6) {$\Refl{Q'}$};
	\node at (-2.95,0.33) {$\bullet$};
      \draw (2.4, 2.4) -- (3.08,-.3) node [below] {$\ell$};
      \node at (3,0) {$\bullet$};
      \node at (2.8,-0.2) {$P$};
      \node at (3.6,0.3) {$u$};
      \draw[->] (3.5,0) arc [start angle=0, end angle=105,radius=0.5];
      \draw (0,0) -- (2.7, 1.4) node [right] {$\Refl{P}$};
      \node at (2.65,1.35) {$\bullet$};
      \draw (0,0) node [below] {$O$} ;
      \end{scope}
      \end{tikzpicture}   \caption{ Proof of \eqref{B}. $(n-2)\pi/n<\varphi<\pi$. \label{FigB}}
    \end{figure}
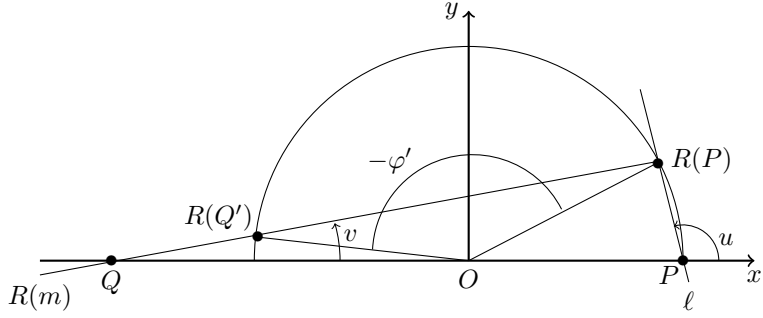
    
    We claim:
  \begin{align}
\label{B}  \frac{n-2}{n}\pi <\varphi < \pi \implies
   v-u = \frac{\pi}{n} - \varphi + \angle P O \Refl{Q'} - \pi \in
   \left(-\pi,\, \frac{2-n}{2n}\pi\right).
  \end{align}
 The proof is as follows: By Figure~\ref{FigB}, $v=(-\varphi' + \pi )/2 + 2\pi/n
 -\pi >0$. 
 
 So, the desired equation follows from 
 equations~\eqref{ddag} and \eqref{eq:w}, in a similar argument as Case~A. Equation~\eqref{eq:w} implies $v=\pi/n +\varphi/2 -
  \pi/2<\pi/n$. By condition of Case~B and the definition~\eqref{ddag}
  of the angle $u$, we have
  $(n-1)\pi/n<u<\pi$. So $v-u$ is indeed in the desired interval of \eqref{B}. This completes the
 proof of \eqref{B}.

Assume $\tan(v-u)=\tan(\pi/n)$. The interval $(-\pi,\, (2-n)\pi/(2n))$
 contains $\pi/n + k \pi$ for a unique integer $k=-1$. By \eqref{B},
 $\varphi = \angle PO \Refl{Q'}$.

 \medskip
 Case~C. $\varphi=(n - 2)\pi/n$. Then the line $\Refl{m}$ does not
 intersect with the $x$-axis. As $\Refl{m}\cap C=\{\Refl{P}, \Refl{Q'}\}$,
 $\pi/2=\left(\angle P O \Refl{Q'} + \angle P O \Refl{P}\right)/2$,
 $\angle \Refl{Q'} O \Refl{P} = -\varphi'$, $\angle P O
  \Refl{P}=\varphi$, and equation~\eqref{eq:w}, it holds that $\varphi=\angle P O \Refl{Q'}$.
 Thus we have proved the converse of \eqref{equiv:tan}.
 This completes the proof of Claim~\ref{claim:uv}.  \qed\end{proof}

  By the symmetry
  $(\varphi,\gamma)\leftrightarrow(\varphi',\alpha)$, 
\eqref{assert:v-u} of Claim~\ref{claim:uv} implies: \emph{If equation~\eqref{eq:w} holds and $\varphi<\pi/n$, then
  $\eqref{eq:u}\ \&\ \eqref{eq:v} \iff f_{n,\gamma,\alpha}(\cos a) =0$.}
Here $f_{n,\gamma,\alpha}(\cos a) =f_{n,\alpha,\gamma}(\cos a)$. So, $T$ is a $\PQ_{n,\alpha,\gamma,a}$ if and only if $f_{n,\alpha,\gamma}(\cos a) =0$.
    This establishes Theorem~\ref{thm:existence Pn}.\qed \end{proof}

\section{Range of inner angles of $\TRPZ_n$-quadrangles}
\label{sec:main}

To classify the two opposite inner angles
$\alpha,\gamma$ and the edge-length $a$ of $\TRPZ_n$-quadrangles $\PQ_{n,\alpha,\gamma,a}$'s, we solve
$f_{n,\alpha,\gamma}(\cos a)=0$, taking the condition
Lemma~\ref{lem:shorta} (Proposition~\ref{prop:vinberg}~\eqref{assert:vinberg}, resp.) of
quadrangles~(spherical 3-gons, resp.) into account.
This classifies all tilings of $\TRPZ_n$, because of 
Fact~\ref{fact:Pquad}~\eqref{assert:bijective}.

\subsection{Discriminant\label{subsec:discriminant}}

The equation
$f_{n,\alpha,\gamma}(\cos a)=0$ has at most two solutions $a\in (0,\,\pi)$, as $\cos a$ is
strictly decreasing for $a\in (0,\,\pi)$ and $f_{n,\alpha,\gamma}(x)$
is quadratic.
The smaller solution $a=a_{n,\alpha,\gamma}^-$ is the arccosine of
\begin{align*}
\frac{1}{2}\cot\frac{\pi}{n}
 \left(\cot\alpha+\cot\gamma
+
 \sqrt{\Delta_{n,\alpha,\gamma}}\right),
\end{align*}
while the larger solution $a=a_{n,\alpha,\gamma}^+$ of
$f_{n,\alpha,\gamma}(\cos a)=0$ is obtained from $a_{n,\alpha,\gamma}^-$ by inverting
the sign in front of the square root. Here
\begin{align*}
\Delta_{n,\alpha,\gamma}:=\cot^2\gamma+ 2\left(2\tan^2\frac{\pi}{n}+1\right)\cot\alpha\cot\gamma
	+ \cot^2\alpha .
       \end{align*}

\begin{lemma}\label{lem:degenerate}Let 
 $\pi/2<\alpha<\pi<\gamma<3\pi/2$. Then
\begin{enumerate}
 \item \label{assert:dgn}
       $\Delta_{n,\alpha,\gamma}\ge0\iff\gamma\le\degenerate_n(\alpha)$.
Moreover, the equality of one side implies that of the other side.
Here     $\degenerate_n:(\pi/2,\ \pi)\to (\pi,\ 3\pi/2)$ is defined as
\begin{align*}
     \degenerate_n(\psi)&:=\pi-\arctan\left( \cos^2\frac{\pi}{n}
 \left(\sin\frac{\pi}{n}+1\right)^{-2} \tan \psi \right). 
\end{align*}

\item \label{usual} The curve
 $\gamma=\degenerate_n(\alpha)$ is strictly decreasing, convex, and
 has the tangential line $\gamma=2\pi - \fra{\pi}{n} - \alpha$ at
      $\alpha=3\pi/4-\fra{\pi}{(2n)}$.
      
\item \label{unusual} 
$2\pi - \fra{\pi}{n} - \alpha < \degenerate_n(\alpha)<2\pi - \alpha$ for all $\alpha\in
 \left( \fra{\pi}{2},\, \fra{3\pi}{4}-\fra{\pi}{(2n)}
      \right)$.
\end{enumerate}
\end{lemma}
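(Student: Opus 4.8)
The plan is to reduce everything to the single auxiliary variable $\cot\gamma$ and to read off the sign of the discriminant from a clean factorization. Throughout I write $p=\cot\alpha$, $q=\cot\gamma$, $s=\sin\frac{\pi}{n}$, $c=\cos\frac{\pi}{n}$; under the hypothesis $p<0<q$ and $0<s,c<1$. The two arithmetic identities that drive the whole argument are $2\tan^2\frac{\pi}{n}+1=\frac{1+s^2}{c^2}$ and $\left(2\tan^2\frac{\pi}{n}+1\right)^2-1=\frac{4s^2}{c^4}$; together with $c^2=(1-s)(1+s)$ they make all the square roots rational in $s,c$.

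For assertion \eqref{assert:dgn} I regard $\Delta_{n,\alpha,\gamma}$ as the monic quadratic $q^2+2\frac{1+s^2}{c^2}pq+p^2$ in $q$. Solving it and using the second identity gives the two roots $q_{\pm}=-p\,\frac{(1\pm s)^2}{c^2}$, both positive since $-p>0$; hence $\Delta_{n,\alpha,\gamma}\ge0$ iff $q\le q_-$ or $q\ge q_+$. A direct computation from the definition of $\degenerate_n$ (using $\tan(\pi-x)=-\tan x$) shows $\cot\degenerate_n(\alpha)=-p\frac{(1+s)^2}{c^2}=q_+$, and since $\degenerate_n(\alpha)\in(\pi,\frac{3\pi}{2})$ and $\cot$ is strictly decreasing there, the branch $q\ge q_+$ is exactly $\gamma\le\degenerate_n(\alpha)$. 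The one genuinely delicate point, which I expect to be the main obstacle, is the spurious branch $q\le q_-$: algebraically it also makes $\Delta_{n,\alpha,\gamma}\ge0$, so it must be discarded by admissibility rather than left to the reader. I show that the threshold $\gamma_-$ with $\cot\gamma_-=q_-$ satisfies $\gamma_->2\pi-\alpha$ (equivalently $\frac{(1-s)^2}{c^2}\le1$, i.e. $1-s\le1+s$), so the whole branch $q\le q_-$ lies in the inadmissible range where $\delta=2\pi-\alpha-\gamma<0$; combined with assertion \eqref{unusual}, which places $\degenerate_n(\alpha)$ strictly below $2\pi-\alpha$, the admissible region $\gamma<2\pi-\alpha$ meets $\{\Delta_{n,\alpha,\gamma}\ge0\}$ only in $\{\gamma\le\degenerate_n(\alpha)\}$. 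The equality clause is then immediate: on this range $\Delta_{n,\alpha,\gamma}=0\iff q=q_+\iff\gamma=\degenerate_n(\alpha)$.

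For assertion \eqref{usual} I write $\degenerate_n(\alpha)=\pi-\arctan(k\tan\alpha)$ with $k=\frac{c^2}{(1+s)^2}=\frac{1-s}{1+s}\in(0,1)$ and differentiate. The first derivative $\degenerate_n'(\alpha)=-\frac{k\sec^2\alpha}{1+k^2\tan^2\alpha}$ is negative, giving strict decrease; a second differentiation produces, with $T=\tan\alpha$, the numerator factor $-2kT(1-k^2)(1+T^2)$, which is positive because $k<1$ and $T=\tan\alpha<0$ on $(\frac\pi2,\pi)$, giving strict convexity. For the tangency, solving $\degenerate_n'(\alpha)=-1$ reduces to $k\tan^2\alpha=1$, i.e. $\tan\alpha=-\frac{1+s}{c}$; the half-angle identity $\frac{1+s}{c}=\cot(\frac\pi4-\frac{\pi}{2n})$ identifies the root in $(\frac\pi2,\pi)$ as $\alpha_0=\frac{3\pi}{4}-\frac{\pi}{2n}$, and evaluating $\cot\degenerate_n(\alpha_0)=\frac{1+s}{c}=\cot(\frac{5\pi}{4}-\frac{\pi}{2n})$ (again by injectivity of $\cot$ on $(\pi,\frac{3\pi}{2})$) gives $\degenerate_n(\alpha_0)=\frac{5\pi}{4}-\frac{\pi}{2n}=2\pi-\frac\pi n-\alpha_0$, so the tangent of slope $-1$ is precisely $\gamma=2\pi-\frac\pi n-\alpha$.

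Assertion \eqref{unusual} then splits cleanly. The lower bound $2\pi-\frac\pi n-\alpha<\degenerate_n(\alpha)$ is exactly the statement that a strictly convex curve lies above its tangent line away from the point of contact, so it follows from \eqref{usual} once $\alpha<\alpha_0$. For the upper bound I study $D(\alpha):=(2\pi-\alpha)-\degenerate_n(\alpha)=\pi-\alpha+\arctan(k\tan\alpha)$, check that $D(\alpha)\to0$ as $\alpha\to\frac{\pi}{2}$ from the right, and compute $D'(\alpha)=\frac{(k-1)(1-k\tan^2\alpha)}{1+k^2\tan^2\alpha}$; on $(\frac\pi2,\alpha_0)$ both $k-1$ and $1-k\tan^2\alpha$ are negative (the latter since $k\tan^2\alpha>1$ there), so $D'>0$ and hence $D>0$, which is $\degenerate_n(\alpha)<2\pi-\alpha$. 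The only real subtlety in the whole proof remains the exclusion of the extraneous root $q_-$ in \eqref{assert:dgn}, and I would be careful to justify it by the admissibility $\delta>0$ together with the bound from \eqref{unusual}.
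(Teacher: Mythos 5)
Your proof is correct and follows essentially the paper's own route: like the paper, you view $\Delta_{n,\alpha,\gamma}$ as a monic quadratic in $\cot\gamma$ with roots $-\cot\alpha\left(1\mp\sin\frac{\pi}{n}\right)^2\sec^2\frac{\pi}{n}$ (the paper's $z_1,z_2$), identify the larger root with $\cot\degenerate_n(\alpha)$ via the decreasing $\cot$ on $(\pi,\,3\pi/2)$, and discard the spurious smaller-root branch using the admissibility $\alpha+\gamma<2\pi$ coming from $\delta>0$ --- exactly the paper's step ``$z_1<\cot\gamma$,'' with your one-line inequality $1-\sin\frac{\pi}{n}<1+\sin\frac{\pi}{n}$ being a cleaner form of the paper's equivalent $\sin\left(\frac{\pi}{n}+\frac{\pi}{4}\right)>\frac{1}{\sqrt2}$. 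Parts \eqref{usual} and \eqref{unusual} also match the paper's argument (explicit first and second derivatives, tangency of $\gamma=2\pi-\fra{\pi}{n}-\alpha$ at $\alpha=\frac{3\pi}{4}-\frac{\pi}{2n}$, convexity-plus-tangent-line for the lower bound, and the limit $\degenerate_n(\alpha)\to\frac{3\pi}{2}$ as $\alpha\to\frac{\pi}{2}^{+}$ together with a slope comparison for the upper bound), the only cosmetic difference being that you solve $\degenerate_n'(\alpha)=-1$ for the tangency point where the paper verifies it by direct evaluation.
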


\begin{figure}[htp]
\centering
\begin{tikzpicture}[scale=0.3,axis/.style={very thick, ->, >=stealth'}]

\pgftext{\includegraphics{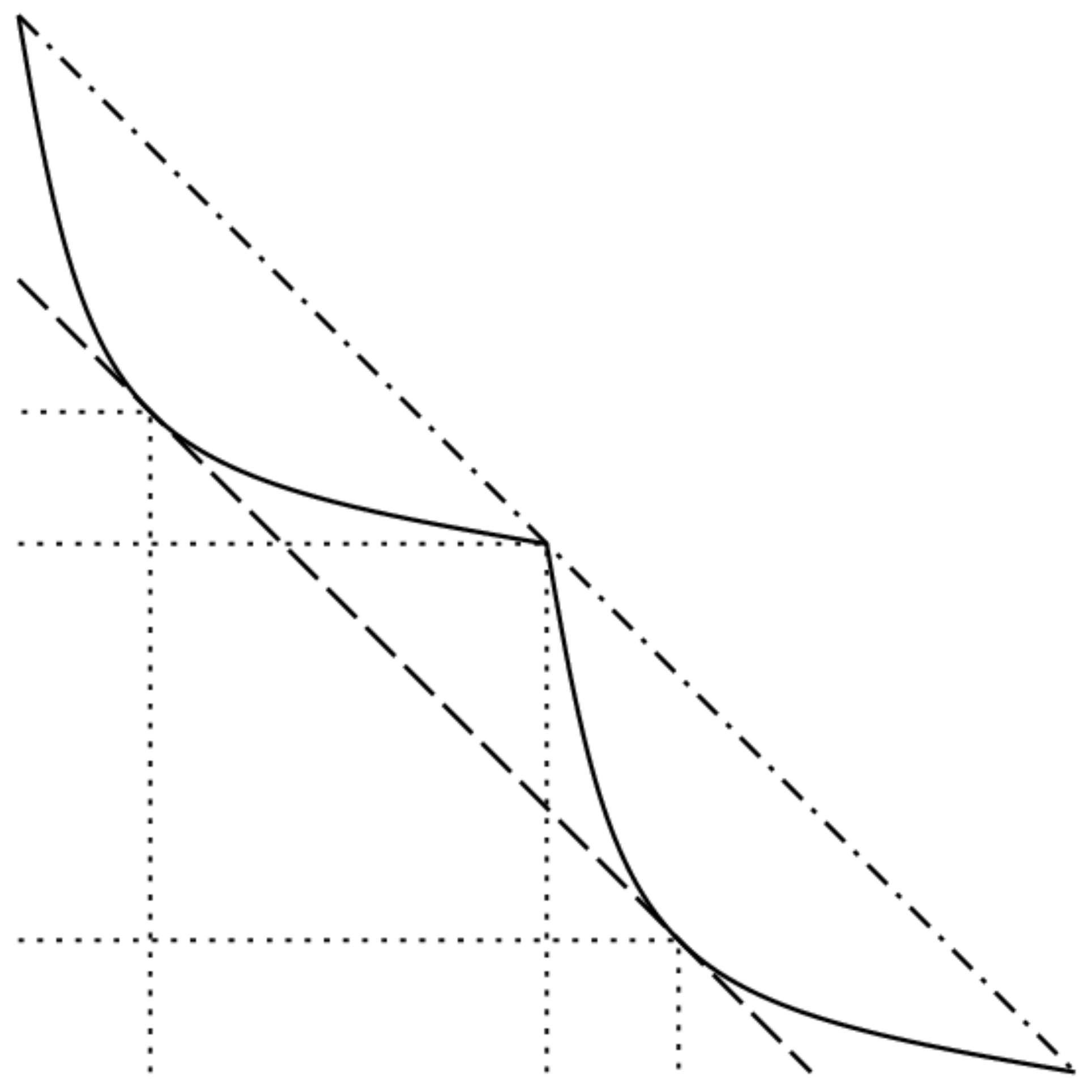}}

 \node at (-7,-11) {\small $\frac34 -\frac{1}{2n}$};
 \node at (-12.5,-7) {\small $\frac34 -\frac{1}{2n}$};

 \node at (-12.5,5) {\small $\frac32 -\frac{1}{n}$};
 \node at (5,-11) {\small $\frac32 -\frac{1}{n}$};
 \node at (-12.5,2.5) {\small $\frac54 - \frac{1}{2n}$};
 \node at (2.5,-12.5) {\small $\frac54 - \frac{1}{2n}$};
 \node at (-11,10) {\small $\frac32$};
 \node at (10,-11) {\small $\frac32$};
\node at (0,-11) {\small $1$};
 \node at (-11,0) {\small $1$};
 
\node at (-11,-11) {\small $(\frac12,\frac12)$};

 \draw[axis] (-9.5, -9.5) -- (12, -9.5) node(xline)[right] {$\alpha$
 [$\pi$ rad]};
 \draw[axis] (-9.5, -9.5) -- (-9.5, 12) node(yline)[above] {$\gamma$  [$\pi$ rad]};
\end{tikzpicture}
\caption{The curves $\gamma=\degenerate_n(\alpha)$~($\pi/2<\alpha<\pi$),
 $\alpha=\degenerate_n(\gamma)$~($\pi/2<\gamma<\pi$),
 $\alpha+\gamma=2\pi-\pi/n$~(dash), and $\alpha+\gamma=2\pi$~(dash-dot),
 when $n=4$. See Lemma~\ref{lem:degenerate}.
 \label{thecurves}}
\end{figure}

 \begin{proof} \eqref{assert:dgn}. Let $s_n:=\sin(\fra{\pi}{n}) -1<0$,
  $t_n:=\sin(\fra{\pi}{n}) +1>0$, and
  \begin{align*}
z_1:=-s_n^2 \cot\alpha\,  \sec^2 \frac{\pi}{n},\qquad z_2:=-t_n^2
   \cot\alpha\, \sec^2 \frac{\pi}{n}.
   \end{align*}We
prove
$z_1 <\cot\gamma\ \implies \mbox{Lemma~\ref{lem:degenerate}~\eqref{assert:dgn}}$, as follows: By calculation,
$z_1$ and $z_2$ are the zeros of the
  quadratic polynomial 
\begin{align*}
p(z):=z^2+ 2\cot \alpha\left(2 \tan ^2\frac{\pi}{n} +1 \right) z +\cot ^2
 \alpha. 
\end{align*}
Here $\Delta_{n,\alpha,\gamma}=p(\cot\gamma)$.
$z_1<z_2$ by $\pi/2<\alpha<\pi$. Clearly
  $\gamma\le\degenerate_n(\alpha)$ if and only if $\gamma-\pi\le -\arctan \left( {\tan
  \alpha \cos^2 { (\fra {\pi }{n})} t_n ^{-2}} \right) $. 
  $\gamma-\pi\in(0,\,\pi/2)$ by the premise. So, by applying the strictly
  decreasing function $\cot$, $\gamma\le\degenerate_n(\alpha)$ is
  equivalent to $z_2\le \cot\gamma$. Hence
   \begin{claim}Let
    $\pi/2<\alpha<\pi<\gamma<3\pi/2$. Then
    \begin{align*}
   \gamma\le \degenerate_n(\alpha)\iff
  -\cot\alpha \left(1+\sin\frac{\pi}{n}\right)^2 \sec^2 \frac{\pi}{n}\le\cot \gamma.
     \end{align*}
  The equality of one side implies that of the
   other side.   \end{claim}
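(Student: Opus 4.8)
The plan is to unfold the definition of $\degenerate_n$, reduce the asserted equivalence to a comparison of two angles that both lie in the interval $(0,\,\pi/2)$, and then transport that comparison across the strictly decreasing bijection $\cot\colon(0,\,\pi/2)\to\Rset$. First I would substitute the defining formula for $\degenerate_n(\alpha)$ and subtract $\pi$ from both sides, turning $\gamma\le\degenerate_n(\alpha)$ into the equivalent inequality
\[
\gamma-\pi\le -\arctan\!\left(\cos^2\frac{\pi}{n}\,(1+\sin\frac{\pi}{n})^{-2}\tan\alpha\right).
\]

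The step I expect to be the crux is verifying that both sides genuinely sit inside $(0,\,\pi/2)$, so that applying $\cot$ is legitimate and yields an equivalence rather than a one-sided implication. The left side is immediate, since $\pi<\gamma<3\pi/2$ forces $\gamma-\pi\in(0,\,\pi/2)$. For the right side I would use $\pi/2<\alpha<\pi$, which makes $\tan\alpha<0$; as the prefactor $\cos^2(\pi/n)(1+\sin(\pi/n))^{-2}$ is positive, the argument of $\arctan$ is negative, so $-\arctan(\cdot)$ is strictly positive and, being minus an $\arctan$ value, is strictly below $\pi/2$. This sign bookkeeping on $\tan\alpha$ is the only genuinely delicate point; everything else is mechanical.

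With both sides confirmed to lie in $(0,\,\pi/2)$, I would apply the strictly decreasing $\cot$ to reverse the inequality and then simplify using two elementary identities: $\cot(\gamma-\pi)=\cot\gamma$ by $\pi$-periodicity, and $\cot(-\arctan x)=-1/x$ by the oddness of $\cot$ together with $\cot(\arctan x)=1/x$. Plugging in $x=\cos^2(\pi/n)(1+\sin(\pi/n))^{-2}\tan\alpha$ and using $1/\tan\alpha=\cot\alpha$ turns the transported right-hand cotangent into exactly $-\cot\alpha\,(1+\sin(\pi/n))^2\sec^2(\pi/n)$, which is the asserted bound on $\cot\gamma$. Finally, the clause ``the equality of one side implies that of the other'' is free: since $\cot$ is strictly monotone, hence injective, on $(0,\,\pi/2)$, equality in the angle comparison is equivalent to equality in the transported cotangent comparison. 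I would also remark that this bound is precisely $z_2\le\cot\gamma$ for the larger root $z_2=-t_n^2\cot\alpha\sec^2(\pi/n)$ of the quadratic $p$ already isolated in the surrounding proof, so the Claim dovetails directly with the preceding root analysis of $\Delta_{n,\alpha,\gamma}=p(\cot\gamma)$.
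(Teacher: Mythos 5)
Your proposal is correct and follows essentially the same route as the paper: unfold the definition of $\degenerate_n$, observe $\gamma-\pi\in(0,\,\pi/2)$, and transport the inequality through the strictly decreasing $\cot$ to land on $z_2\le\cot\gamma$. Your explicit check that $-\arctan(\cdot)$ also lies in $(0,\,\pi/2)$ (via $\tan\alpha<0$) is a detail the paper leaves implicit, but it is the same argument.
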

 Therefore,  Lemma~\ref{lem:degenerate}~\eqref{assert:dgn} follows from
  $z_1<\cot\gamma$, because the polynomial $p(z)$ is quadratic.

We first prove $z_1< -\cot \alpha $ as follows: The premise
$\alpha\in(\pi/2,\, \pi)$ implies $\tan \alpha <0$. So,  $z_1
 < - \cot \alpha $ if and only if $s_n^2\sec^2(\fra{\pi}{n})<1$.
As $s_n<0$, the inequality $\sec^2(\fra{\pi}{n})s_n^2<1$
 is equivalent to $ - \cos (\fra{\pi}{n})< s_n$, which is
 equivalent to $1/\sqrt2< \sin(\fra{\pi}{n} +\pi/4)$. The last
 inequality holds for $n=3$ by calculation. It also holds for $n\ge4$, by
 $\fra{\pi}{n}+\pi/4\in (\pi/4,\, \pi/2]$. Thus $z_1< -\cot \alpha $.

Assume $z_1\ge \cot\gamma$. Then $\cot \gamma <
-\cot \alpha $ by $z_1< -\cot \alpha $.  By 
$\alpha\in(\pi/2,\,\pi)$ and $\gamma\in(\pi,\,3\pi/2)$, we have  $\cot \gamma >0$,
$\tan \alpha \tan \gamma  <0$, and thus
$-\tan \alpha <\tan \gamma $. Hence $\tan(\alpha+\gamma)=(
\tan \alpha  +\tan \gamma )/(1-\tan \alpha \tan \gamma  )>0$, which implies
$\alpha+\gamma>2\pi$ by the premise $\alpha\in(\pi/2,\,\pi),\gamma\in(\pi,\,
 3\pi/2)$. This contradicts against $\alpha+\gamma+\delta=2\pi$. Hence $z_1<\cot\gamma$.
  
  \medskip To prove
Lemma~\ref{lem:degenerate}~\eqref{usual}, we first verify the curve $\gamma=\degenerate_n(\alpha)$
  and the line $\gamma=2\pi - (\pi/n) - \alpha$ intersect at
  $\alpha=3\pi/4 - \pi / (2n)$, as follows:
  \begin{align} 
\degenerate_n\left(\frac{3\pi}{4}-\frac{\pi}{2n} \right)=2\pi
   -\frac{\pi}{n} - \left(\frac{3\pi}{4}-\frac{\pi}{2n}\right),
 \label{tangential line}
 \end{align}
  if and only if $\arctan \left( \cos^{2} \left( {\fra {\pi }{n}}
  \right) \tan \left( \pi /4+\pi /(2n) \right) \left( \sin \left( {\fra
  {\pi }{n}} \right) +1 \right) ^{-2} \right)$ is $\pi /4-\pi /(2n)
  $. As the right-hand side $\pi /4-\pi /(2n) $ is strictly between
  $(0,\,\pi/2)$, the condition is equivalent to ${ \cos^2 \left( {\fra
  {\pi }{n}} \right) \tan^2 \left( \pi /4+{\fra {\pi }{(2n)}} \right)
  \left( \sin \left( {\fra {\pi }{n}} \right) +1 \right) ^{-2}}=1
  $. Hence the square root of the left-hand side is unity, as $n\ge3$
  implies $0<(1/4 + 1/(2n))\pi < \pi/2$. By calculation, it is indeed
  unity from the double-angle formulas. Thus \eqref{tangential line} is
  proved.

By calculation, we have a partial derivative 
  \begin{align*}
\partial_\alpha\degenerate_n(\alpha)&=
-t_n^2\cos^2\frac{\pi}{n}\left(\left(t_n^4- \cos^4 \frac {\pi
}{n}\right) \cos^2 \alpha + \cos^4 \frac{\pi
}{n}\right)^{-1}.\end{align*} It is negative because $t_n>1$. So
  $\gamma=\degenerate_n(\alpha)$ is decreasing.  Since
  \begin{align}
  \partial_\alpha
   \degenerate_n\left(\frac{3\pi}{4}-\frac{\pi}{2n} \right)=-1
\label{derivative of dgn}      
  \end{align}
by calculation, a line
$\alpha+\gamma=2\pi-\fra{\pi}{n}$ is the tangential line of the curve
$\gamma=\degenerate_n(\alpha)$. By calculation, the second-order
  derivative $\partial^2_\alpha
\degenerate_n(\alpha)$ is
\begin{align*}
-t_n^2
\cos^2\frac{\pi}{n}
  \left(t_n^4 - \cos^4 {\frac {\pi }{n}}   \right)  
 \sin 2\alpha
 \left(   \left( t_n^4 - \cos^4\frac{\pi}{n} \right) \cos^2\alpha + \cos^4\frac{\pi}{n} \right)^{-2}.
  \end{align*}
It is positive since $\pi/2< \alpha< \pi$ by the premise and $t_n>1$.

 \medskip To prove
Lemma~\ref{lem:degenerate}~ \eqref{unusual}, observe that the first inequality $2\pi -\fra{\pi}{n} - \alpha < \degenerate_n(\alpha)$ follows from
Lemma~\ref{lem:degenerate}~\eqref{usual}. As for
the second inequality $\degenerate_n(\alpha)<2\pi - \alpha$, note that
$\degenerate_n(\alpha)\to 3\pi/2-0$, as $\alpha\to\pi/2+0$. By 
  equality~\eqref{derivative of dgn} and the convexity of the
 curve $\gamma=\degenerate_n(\alpha)$, we have $\partial_\alpha
 \degenerate_n(\alpha)\le -1$ for all $\alpha\in \left(\pi/2,\, 3\pi/4 - \pi/(2n)\right)$. Thus
 $\degenerate_n(\alpha)<2\pi-\alpha$. 
This establishes Lemma~\ref{lem:degenerate}.\qed
\end{proof}

\subsection{The statement}\label{subsec:statement}

\begin{definition}For $n\ge3$, define
\begin{align*}
B_n:=\left\{\,(\alpha,\gamma)\in\left((0,\,\pi)\cup(\pi,\,2\pi)\right)^2\,\mid\,
      \mbox{A $\PQ_{n,\alpha,\gamma,a}$ exists for some $a\in(0,\,\pi)$}\,\right\}.
\end{align*}
\end{definition}
By simple trigonometric formulas, we  describe $B_n$
and the length $a$ of the meridian edge  of each $\TRPZ_n$-quadrangles
$\PQ_{n,\alpha,\gamma,a}$.

\begin{theorem}[Inner angles and edge-length of
 $\TRPZ_n$-quadrangles]\label{thm:main}Assume $n\ge3$.
\begin{enumerate}
\item \label{assert:clas}
$B_n=\bigcup_{i=1}^8\,B_n^{(i)}$ where $B_n^{(i)}$ is defined in \eqref{def:bns} below.

\item \label{assert:amb}
 $(\alpha,\gamma)\in B_n^{(4)}\cup B_n^{(8)}$,  if and only if there
 exist exactly two $\TRPZ_n$-quadrangles. Here the edge-length $a$ is      
      $a_{n,\alpha,\gamma}^+$ or $a_{n,\alpha,\gamma}^-$.

\item \label{assert:unamb}
 $(\alpha,\gamma)\in \bigcup_{1\le i\le 8,i\ne 4,8}B_n^{(i)}$, if and
      only if there exists a unique $\TRPZ_n$-quadrangle
      $\PQ_{n,\alpha,\gamma,a}$.  Here the edge-length $a$ is
   $a_{n,\alpha,\gamma}^-$ for $(\alpha,\gamma)\in B_n^{(1)}$; $a_{n,\alpha,\gamma}^+$ for
   $(\alpha,\gamma)\in B_n^{(2)}\cup
   B_n^{(5)}\cup B_n^{(6)}$;
       \begin{align}
      \pi -\arccos \left( \left(\sec\frac {\pi }{n}\right) \left( \sin \frac {\pi }{n} +1 \right)
 \cot \alpha \right)
	=a_{n,\alpha,\gamma}^\pm<\frac{\pi}{2} \label{degenerate sol3}
       \end{align}
      for $(\alpha,\gamma)\in B_n^{(3)}$;
      \begin{align}
      \pi -\arccos \left( \left(\sec\frac {\pi }{n}\right)  \left( \sin \frac {\pi }{n} +1 \right)
 \cot \gamma \right)
	=a_{n,\alpha,\gamma}^\pm<\frac{\pi}{2} \label{degenerate sol7}
      \end{align}
      for $(\alpha,\gamma)\in B_n^{(7)}$.
      
\item \label{def:bns} Here
\begin{itemize}
\item $B_n^{(1)}$ is
an open 
pentagon $\{(\alpha,\gamma) \mid \pi/2<\alpha < \pi,\ \pi/2<\gamma<\pi,\
      \alpha+\gamma<2\pi-\fra{\pi}{n}\}$;

\item $B_n^{(2)}$ is an open rectangular triangle 
$\{(\alpha,\gamma) \mid \fra{\pi}{2}<\alpha,\ \pi<\gamma,\ \alpha+\gamma<2\pi-\fra{\pi}{n}\}$; 

\item $B_n^{(3)}$ is a curve 
$\{\,(\alpha,\degenerate_n(\alpha)) \mid
      \pi/2<\alpha<3\pi/{4}-\fra{\pi}{(2n)}\}$; 

\item $B_n^{(4)}$ is a nonempty open set $\{\,(\alpha,\gamma) \mid
\pi/2<\alpha,\ 2\pi - \fra{\pi}{n} - \alpha < \gamma< \degenerate_n(\alpha)\}$;   

\item $B_n^{(5)}$ is  symmetric to $B_n^{(1)}$
around the line $\alpha+\gamma=\pi$; and

\item $B_n^{(i+4)}$ $(2\le i\le 4)$ is  symmetric to 
      $B_n^{(i)}$ around the line $\gamma=\alpha$.
\end{itemize}
\end{enumerate} 
\end{theorem}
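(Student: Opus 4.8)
The plan is to convert the existence question into a root-counting problem for the quadratic $f_{n,\alpha,\gamma}$ and to read off the answer region by region. By Theorem~\ref{thm:existence Pn} a quadrangle $\PQ_{n,\alpha,\gamma,a}$ exists precisely when $\cos a$ is a root of $f_{n,\alpha,\gamma}$, and by Lemma~\ref{lem:shorta} an admissible value of $\cos a$ must lie in $(-1,1)\setminus\{0\}$ with its sign dictated by $\delta=2\pi-\alpha-\gamma$ through~\eqref{cond:deltaconvex}: $\delta\in(0,\pi)$ forces $\cos a>0$ and $\delta\in(\pi,2\pi)$ forces $\cos a<0$. Writing the two roots as
\[
x_{\pm}=\frac{1}{2}\cot\frac{\pi}{n}\left(\cot\alpha+\cot\gamma\pm\sqrt{\Delta_{n,\alpha,\gamma}}\right),
\]
I would record the Vieta relations $x_+x_-=-\cot\alpha\cot\gamma$ and $x_++x_-=\cot(\pi/n)(\cot\alpha+\cot\gamma)$, and note that the sign of $\Delta_{n,\alpha,\gamma}$ is the sign of the discriminant of $f_{n,\alpha,\gamma}$, controlled by Lemma~\ref{lem:degenerate}: $\Delta_{n,\alpha,\gamma}\ge0\iff\gamma\le\degenerate_n(\alpha)$ in the reflex range $\pi/2<\alpha<\pi<\gamma<3\pi/2$. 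The theorem then amounts to counting how many of $x_\pm$ are admissible.

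Before the casework I would use two symmetries of $f_{n,\alpha,\gamma}$ to reduce the eight regions to four. Since $f_{n,\alpha,\gamma}=f_{n,\gamma,\alpha}$, reflection in $\gamma=\alpha$ preserves the root set and carries $B_n^{(2)},B_n^{(3)},B_n^{(4)}$ onto $B_n^{(6)},B_n^{(7)},B_n^{(8)}$ with the same edge-lengths; and since $\cot(\pi-\psi)=-\cot\psi$ gives $f_{n,\pi-\alpha,\pi-\gamma}(x)=f_{n,\alpha,\gamma}(-x)$, reflection in $\alpha+\gamma=\pi$ negates every root, sending $a\mapsto\pi-a$ and interchanging $a_{n,\alpha,\gamma}^-$ with $a_{n,\alpha,\gamma}^+$, so it carries $B_n^{(1)}$ onto $B_n^{(5)}$. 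Hence it suffices to treat the four fundamental regions with $\alpha\in(\pi/2,\pi)$, after which the remaining assertions follow by transport.

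I would run the casework by partitioning the admissible $(\alpha,\gamma)$-square (as cut out by Lemma~\ref{lem:shorta}) according to which of $(0,\pi/2),(\pi/2,\pi),(\pi,3\pi/2)$ contains each of $\alpha$ and $\gamma$. For $\alpha,\gamma\in(\pi/2,\pi)$ (region $B_n^{(1)}$) every term of $\Delta_{n,\alpha,\gamma}$ is positive and $x_+x_-<0$, so the roots are real and straddle $0$; as $\delta\in(0,\pi)$ only the positive root $x_+$ is admissible, and a short computation gives $f_{n,\alpha,\gamma}(1)=0\iff\alpha+\gamma=2\pi-\pi/n$, so $x_+\in(0,1)$ exactly on the open pentagon, giving the unique solution $a=a_{n,\alpha,\gamma}^-$. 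For $\alpha\in(\pi/2,\pi)$ and $\gamma\in(\pi,3\pi/2)$ (regions $B_n^{(2)},B_n^{(3)},B_n^{(4)}$) one has $\cot\alpha<0<\cot\gamma$, so $x_+x_->0$, and the admissibility constraint $\delta>0$ is equivalent to $\cot\alpha+\cot\gamma>0$, hence to a positive root-sum; thus both roots are positive whenever real. Here $\degenerate_n$ governs the count: on $\gamma=\degenerate_n(\alpha)$ there is a double root equal to $-\sec(\pi/n)(\sin(\pi/n)+1)\cot\alpha$, which is the degenerate value~\eqref{degenerate sol3}; below the line $\alpha+\gamma=2\pi-\pi/n$ one has $f_{n,\alpha,\gamma}(1)<0$, so exactly one root lies in $(0,1)$ (the single solution $a^+$ on $B_n^{(2)}$), whereas between that line and the discriminant curve both roots can lie in $(0,1)$ (the two solutions on $B_n^{(4)}$). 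Finally, the mixed piece $\alpha\in(0,\pi/2),\gamma\in(\pi/2,\pi)$ and its mirror contribute nothing to $B_n$, since every candidate root is killed either by $\Delta_{n,\alpha,\gamma}<0$ or by a sign mismatch with $\delta$; this yields assertion~\eqref{assert:clas}.

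The main obstacle is the last sub-case: for two positive roots, deciding whether they lie on the same side of $1$. The flip of $f_{n,\alpha,\gamma}(1)$ across $\alpha+\gamma=2\pi-\pi/n$ only says the two roots share a side of $1$; to force both into $(0,1)$ I must also control the parabola's vertex $\frac{1}{2}\cot(\pi/n)(\cot\alpha+\cot\gamma)$. I expect the vertex to meet the value $1$ exactly where $\cot\alpha=-\cos(\pi/n)/(1+\sin(\pi/n))$, i.e.\ at $\alpha=3\pi/4-\pi/(2n)$ via the half-angle identity $\cos(\pi/n)/(1+\sin(\pi/n))=\tan(\pi/4-\pi/(2n))$; this is precisely the upper endpoint in $B_n^{(3)}$, and it pins the double root, and with it the whole two-solution region, to $\alpha<3\pi/4-\pi/(2n)$, thereby identifying $B_n^{(4)}\cup B_n^{(8)}$ as the locus of exactly two $\TRPZ_n$-quadrangles. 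With these boundary computations and their mirror images~\eqref{degenerate sol7}, assertions~\eqref{assert:amb} and~\eqref{assert:unamb} follow as well.
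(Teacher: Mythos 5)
Your reduction of the problem to root-counting for $f_{n,\alpha,\gamma}$, with the discriminant curve $\gamma=\degenerate_n(\alpha)$, the signs of $f_{n,\alpha,\gamma}(0)$ and $f_{n,\alpha,\gamma}(1)$, and the two symmetries, matches the algebraic half of the paper's argument, and your sign computations there are correct. But there is a structural gap at the very first step: Theorem~\ref{thm:existence Pn} is \emph{not} the existence statement you quote. Its hypothesis is that a quadrangle is already given (its proof begins ``Assume we are given a quadrangle $N v_0 v_1 v_2$''), so it only characterizes which quadrangles are $\TRPZ_n$-quadrangles; a root of $f_{n,\alpha,\gamma}$ in the admissible range does not by itself produce a spherical $4$-gon. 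This is exactly why the paper proves the if-part of Theorem~\ref{thm:phase1}~\eqref{assert:phase} by constructing the isosceles $3$-gon $v_0 N v_2$ and checking Proposition~\ref{prop:vinberg}~\eqref{assert:vinberg} for $v_0 v_1 v_2$, and why it needs Lemma~\ref{lem:concavePn}, which shows that in the reflex cases every root of $f_{n,\alpha,\gamma}$ in $(0,\,\pi/2)$ satisfies the realizability inequality $\cos a < -\cot(\pi/n)\tan(\alpha+\gamma)$ (established via the nontrivial estimates \eqref{goal1} and \eqref{goal2}). Your proposal contains no substitute for this geometric half, so all of your ``there exist exactly two (resp.\ one) $\TRPZ_n$-quadrangles'' conclusions — in particular assertion~\eqref{assert:amb} on $B_n^{(4)}\cup B_n^{(8)}$ — are only proved in the ``only if'' direction.

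The second gap is the vertex control you yourself flag as ``the main obstacle.'' Your resolution — that the double root equals $1$ exactly at $\alpha=3\pi/4-\pi/(2n)$, which ``pins the whole two-solution region'' — does not work off the curve $B_n^{(3)}$: on $B_n^{(4)}$ one has $\gamma<\degenerate_n(\alpha)$ with both angles in $(\pi,\,3\pi/2)$, where $\cot$ is decreasing, so $axis(n,\alpha,\gamma)$ is \emph{strictly larger} than the curve value $-\sec(\pi/n)(\sin(\pi/n)+1)\cot\alpha$; the comparison with the double root therefore gives an inequality in the wrong direction and cannot yield $axis(n,\alpha,\gamma)<1$. The paper needs two genuine arguments here that your sketch omits. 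First, Claim~\ref{claim:degenerate}~\eqref{assert:2c}: condition~(i) together with $\Delta_{n,\alpha,\gamma}\ge0$ forces $0<axis(n,\alpha,\gamma)<1$, proved via the $\arccot$ inequality \eqref{ineq:u} and the estimate $\tan(\alpha-\pi/2)\tan(\alpha-\pi/2+\pi/n)\le1$. Second, Lemma~\ref{lem:jogai}: in the companion strip $3\pi/4-\pi/(2n)<\alpha<\pi<\gamma$, $2\pi-\pi/n<\alpha+\gamma$, $\gamma<\degenerate_n(\alpha)$ (nonempty, since the convex curve lies above its tangent line on \emph{both} sides of the tangency point), all of your tests pass — $f_{n,\alpha,\gamma}(0)>0$, $f_{n,\alpha,\gamma}(1)\ge0$, $\Delta_{n,\alpha,\gamma}>0$, both roots positive — yet both roots are $\ge1$ and no quadrangle exists. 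Without these two ingredients your casework would misclassify this strip, so the identification of the two-solution locus with $B_n^{(4)}\cup B_n^{(8)}$ remains unproved.
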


\begin{figure}[ht]
 \centering
\begin{tikzpicture}[scale=.3,axis/.style={very thick, ->, >=stealth'}]

\node at (-2.55,11.15) {\pgftext{\includegraphics[scale=0.57]{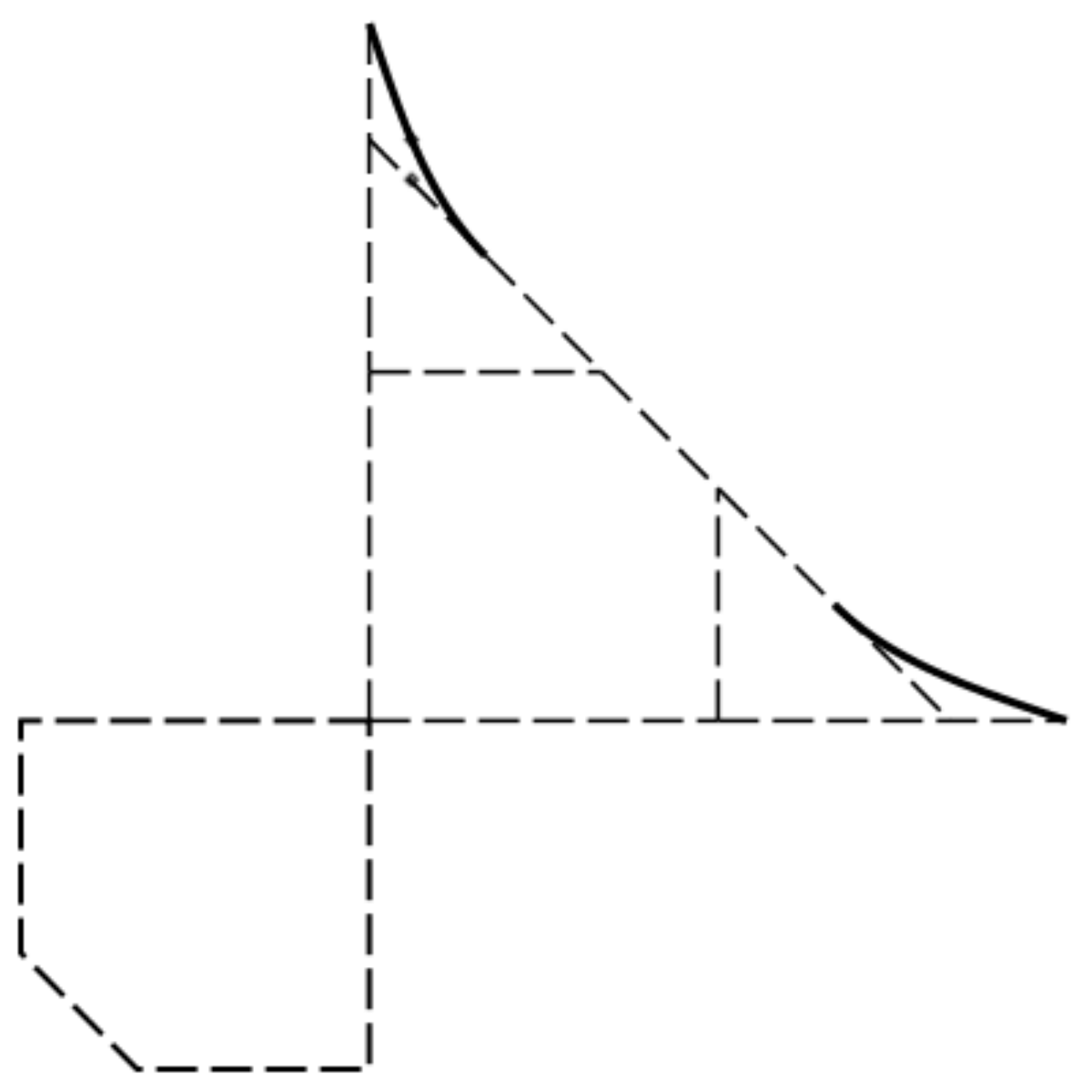}}};
 \node at (-16.5,24) {$\frac32$};
  \draw[dotted] (-15.6,24) -- (-7,24);
\node at  (-10.5, 22.6) {\Large $B_n^{(4)}$}; 
\draw[->] (-9.5, 22.1) -- (-6.4,21.4); 
\node at (-17.5,21) {$\frac32-\frac{1}{n}$};
\draw[dotted] (-15.6,21.1) -- (-7,21.1);%
 %
 %
 %
 \node at (-17.7,18) {$\frac54 - \frac{1}{2n}$};
\draw[dotted] (-15.6,18.3) -- (-4.5,18.3);
 \draw[dotted] (-15.6,15.4) -- (-5,15.4);
 \node at (-16,15.4) {$1$}; 
 \node at (4.5,25)
 {\pgftext{\includegraphics[scale=0.09]{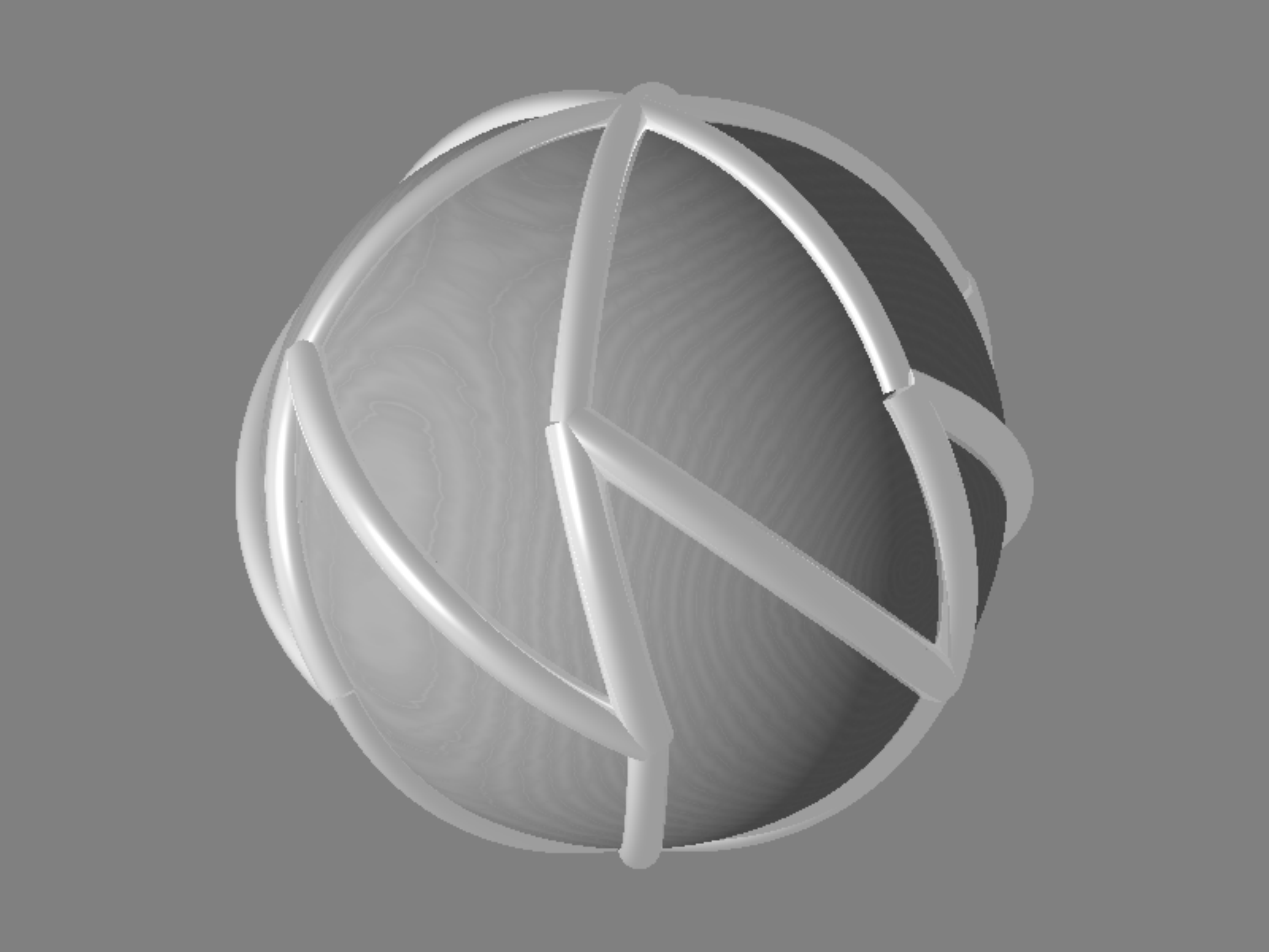}}};
 \draw[<-] (-3.6,17.1) -- (.4,21.8);
 \node at (-4,16.7) {$+$};
 \node at (-17.5,12.5) {$1-\frac{1}{n}$};
\draw[dotted] (-15.6,12.5) -- (1.6,12.5);
 \node at (-17.7,9.6) {$\frac34 -\frac{1}{2n}$};
 \draw[dotted] (-15.6,9.6) -- (4.5,9.6);
 \draw[dotted] (-1.2,15) -- (-1.2,-1.9);
 \node at (-1.2,-2.9) {$1-\frac{1}{n}$};
\node at (-3.8,10) {\Large $B_n^{(1)}$};
\node at (-5.4,17) {\Large $B_n^{(2)}$}; 
 \node at (-9, 27.3) {\Large $B_n^{(3)}$ : };
 \node at (-3.5,27) {$\gamma=\degenerate_n(\alpha)$};
 \node at (-5.5,  25.5) {$\left(\frac{\pi}{2}<\alpha<\frac{3\pi}{4}-\frac{\pi}{2n}\right)$}; 
 \draw[<-] (-6,22) -- (-5,24.3);

\node at (-16.3,7) {$\frac12$};		     
\node at (-12.5,-3) {$\frac{1}{n}$}; 
\node at (-16.3,1.2) {$\frac{1}{n}$};		     
\node at (-7.3,-.9) {$\frac12$}; 
 \node at (11,   10) {\Large $B_n^{(7)}:$};
 \node at (16.5, 9.7) {$\alpha=\degenerate_n(\gamma)$};
 \node at (15,   8) {$\left(\frac{\pi}{2}<\gamma<\frac{3\pi}{4}-\frac{\pi}{2n}\right)$};
 \draw[<-](7.5,8) -- (10,8);
\draw[->] (8,6.1) -- (7.6,7.3);
\node at (9,5) {\Large $B_n^{(8)}$};	      
 \draw[dotted] (-4,18) -- (-4,-1.9);
 \node at (-4,-0.9) {$\frac34-\frac{1}{2n}$};
 \draw[dotted] (1.6,7) -- (1.6,-1.9);
 \node at (1.6,-2.9) {$1$}; 
 \node at (3.4,8.5) {\Large $B_n^{(6)}$};
 \draw[dotted]  (4.5,9.5) -- (4.5,-1.9);
 \node at (16,15)
 {\pgftext{\includegraphics[scale=.09]{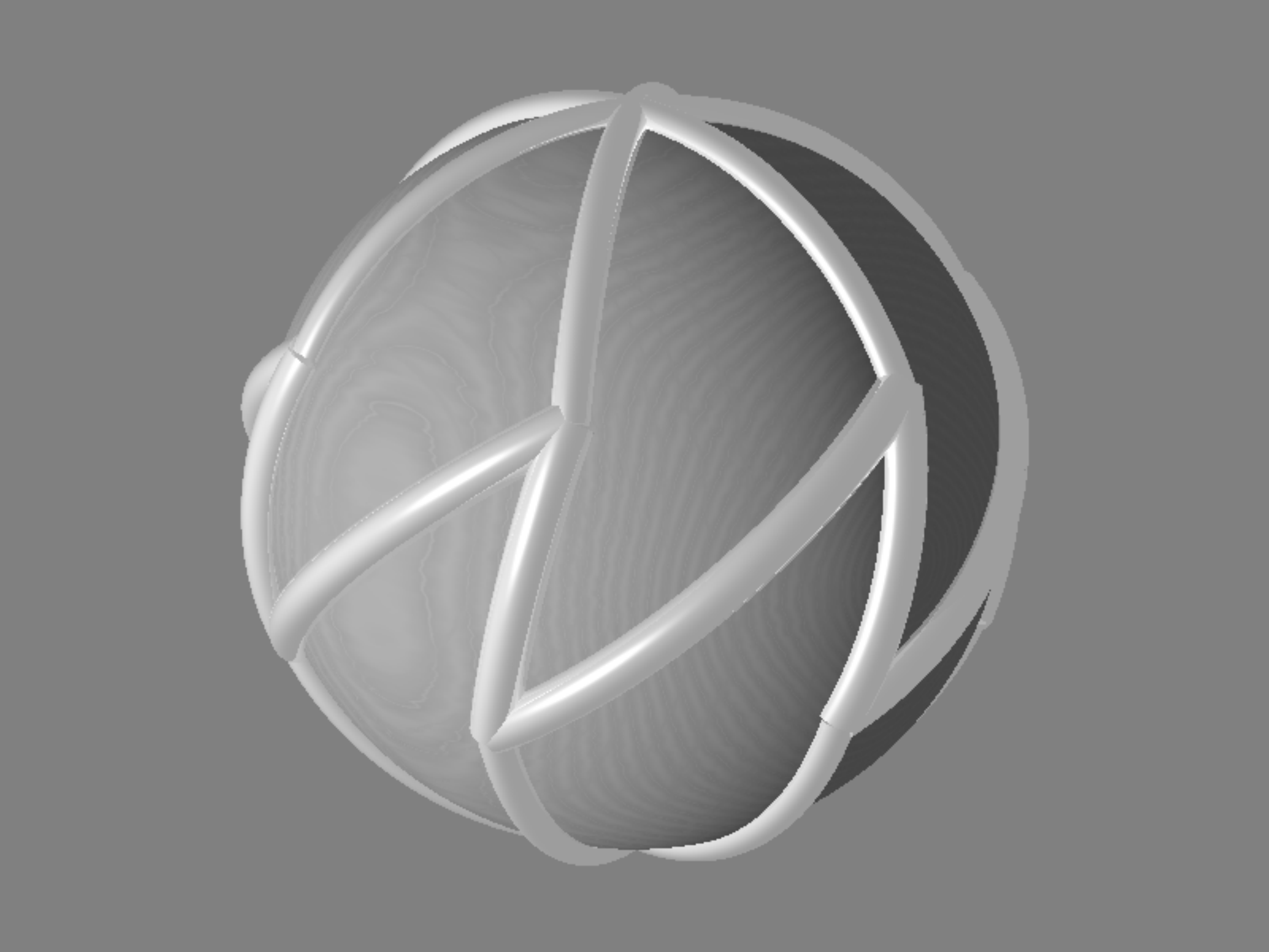}}};
 %
 \draw[<-] (3.6,9.7) -- (11.9,12);
 \node at (3.05,9.6) {$*$};
 \node at (4.5,-0.9) {$\frac54-\frac{1}{2n}$}; 
\draw[dotted] (7.1,6.9) -- (7.1,-2);
\node at (7.1,-2.9) {$\frac32-\frac{1}{n}$};
 \node at (-10,-.2) {\Large $B_n^{(5)}$};	  
\draw[dotted] (10.2,6.9) -- (10.2,-1.9);
\node at (10.2,-2.9) {$\frac32$};
\node at (11,21)
 {\pgftext{\includegraphics[scale=.09]{fig_af_upper_left.pdf}}}; 
 %
 \draw[<-] (-1.8, 12) -- (7,18); 
 \node at (-2.1, 11.7) {$\circ$};  
 \node at (-13.6, .18) {$\times$}; 
 \draw[<-] (-13, 0.4)  -- (-12, 0.9); 

 \node at (-11,3.8)  {\pgftext{\includegraphics[scale=.08]{fig_af_upper_right.pdf}}};
\draw[axis] (-15.5, -1.8) -- (12, -1.8) node(xline)[right] {$\alpha$ [$\pi$ rad]};
\draw[axis] (-15.5, -1.8) -- (-15.5, 26) node(yline)[above] {$\gamma$  [$\pi$ rad]};
 \end{tikzpicture}
  \caption{The set $B_n$~($n=6$). See Theorem~\ref{thm:main}.  The
  edge-length $a$ is $a_{n,\alpha,\gamma}^+$ for $B_n^{(2)}\cup
  B_n^{(5)}\cup B_n^{(6)}$; $a_{n,\alpha,\gamma}^-$ for
  $(\alpha,\gamma)\in B_n^{(1)}$; $a_{n,\alpha,\gamma}^-$ or
  $a_{n,\alpha,\gamma}^+$ for $B_n^{(4)}\cup B_n^{(8)}$; and
  $a_{n,\alpha,\gamma}^-=a_{n,\alpha,\gamma}^+$ for $B_n^{(3)}\cup
  B_n^{(7)}$. The point designated by $\circ$~($\times$, resp.)
  corresponds to the upper left~(upper right, resp.) image of tiling in
  Figure~\ref{fig:af}.  \label{fig:phasediagram3} }
\end{figure}

\begin{remark}\rm
The inner angles $(\alpha,\gamma)$
 of $\T\in\TRPZ_n$ ranges over Figure~\ref{fig:phasediagram3}.
The coordinate system $\langle\varphi,\, a\rangle$ for $\TRPZ_n$ is introduced in Definition~\ref{def:ani}. 
$\langle\varphi,\,
 a\rangle$ ranges over Figure~\ref{fig:af}.
 Figure~\ref{fig:phasediagram3} corresponds to Figure~\ref{fig:af}, as follows:

In Figure~\ref{fig:phasediagram3}, the open set above $\gamma=\pi$, the open set right to
 $\alpha=\pi$, and the open set below $\gamma=\pi/2$ correspond to
 $A^{(1)}_n$, $A^{(3)}_n$, and $A^{(4)}_n$ of Definition~\ref{def:ani} and
 Figure~\ref{fig:af}, respectively.

Here is the proof.  By Theorem~\ref{thm:main}~\eqref{assert:clas} and
Figure~\ref{fig:cs}, $\bigcup_{i=2}^4B_n^{(i)}$ corresponds to
$A_n^{(1)}$, and $\bigcup_{i=6}^8B_n^{(i)}$  to $A_n^{(3)}$. By
Theorem~\ref{thm:main}~\eqref{assert:clas} and Lemma~\ref{lem:shorta},
$B_n^{(5)}$ corresponds to $A_n^{(4)}$. So, by Theorem~\ref{thm:main}
and Theorem~\ref{thm:phia}, $B_n^{(1)}$ corresponds to $A_n^{(2)}$.

 Suppose that 
 the length $a$ of the meridian edges is 0. Then $2n$ non-meridian edges
 split the sphere where the inner angle of each digon is $\delta=2\pi
 -\alpha-\gamma$. So $\delta=\pi/n$. Hence, $a=0$ implies $\alpha+\gamma=\pi(2-1/n)$. 

If $a=\pi$, then the tile is the union of a digon of angle $\alpha$ and that of
 angle $\gamma$, so $\beta=\alpha+\gamma=2\pi/n$.

$a=\pi/2$ corresponds to $\alpha=\pi/2$ or $\gamma=\pi/2$, by the proof of Lemma~\ref{lem:shorta}.
\end{remark}

Theorem~\ref{thm:main}  follows from Theorem~\ref{thm:phase1}~(Subsection~\ref{subsec:contain}) and
Theorem~\ref{thm:near}~(Subsection~\ref{subsec:ambiguity}). 
Proposition~\ref{prop:vinberg}~\eqref{assert:vinberg} plays an important role in
Subsection~\ref{subsec:contain}. 
In Figure~\ref{thecurves},
the two disjoint regions circumscribed by a solid curve, dashed line and dotted line do not correspond to $\TRPZ_n$-quadrangles.
It is because every zero of $f_{n,\alpha,\gamma}$ is greater than $1$. See Lemma~\ref{lem:jogai} of Subsection~\ref{subsec:ambiguity}.

\subsection{$\TRPZ_n$-quadrangles containing the meridian
 diagonal geodesic segment} \label{subsec:contain}

In any tiling of $\TRPZ_n$,  the tile $N v_0 v_1 v_2$ contains a segment $N v_1$, if
and only if $\alpha,\gamma \in (0,\, \pi/2)$ or $\alpha,\gamma\in
(\pi/2,\, \pi)$.
 Theorem~\ref{thm:phase1}~\eqref{assert:phase} and Theorem~\ref{thm:phase1}~\eqref{lem:convex}  correspond
to the two open pentagons $B_n^{(5)}$ and $B_n^{(1)}$ of
Theorem~\ref{thm:main}, respectively.
 For given $n,\alpha,\gamma,a$, there is at most one
$\TRPZ_n$-quadrangle $\PQ_{n,\alpha,\gamma,a}$~(See Fact~\ref{fact:q} and Definition~\ref{def:ispdw Q}). We observe
  \begin{align}\label{sharp}\qquad
  f_{n,\alpha,\gamma}(\pm 1)=\mp \csc \alpha\csc \gamma \csc \frac{\pi}{n} \sin
 \left (\pm \frac{\pi}{n}+\alpha+\gamma\right). 
  \end{align}
  The axis of the parabola $y=f_{n,\alpha,\gamma}(x)$ is
  \begin{align}
                    axis(n,\alpha,\gamma):=\frac{1}{2}\cot\frac{\pi}{n}(\cot\alpha+\cot\gamma) \label{def:axis}
  \end{align}
\begin{theorem}\label{thm:phase1} Let $n\ge3$.
\begin{enumerate}
\item \label{assert:phase} Let $\alpha,\gamma \in \left(0,\,
      \fra{\pi}{2}\right)$. Then an $\TRPZ_n$-quadrangle
      $\PQ_{n,\alpha,\gamma,a}$ exists for some $a\in (0,\,\pi)$, if and only if
      $\alpha+\gamma>\fra{\pi}{n}$.
In this case,  $a=a_{n,\alpha,\gamma}^+$.

\item 
\label{lem:convex}
Let $\alpha,\gamma \in \left(\fra{\pi}{2},\, \pi\right)$. 
Then an $\TRPZ_n$-quadrangle $\PQ_{n,\alpha,\gamma,a}$ exists for some $a\in (0,\,\pi)$,
      if and only if $\alpha+\gamma<2\pi-\fra{\pi}{n}$.  In this case, 
      $a=a_{n,\alpha,\gamma}^-$.
\end{enumerate} 
\end{theorem}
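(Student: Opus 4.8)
The plan is to convert the existence of an $\TRPZ_n$-quadrangle into a statement about the roots of the quadratic $f_{n,\alpha,\gamma}$ and then to read off which root is geometrically realized. By Theorem~\ref{thm:existence Pn}, for $a$ in the admissible range a quadrangle is $\PQ_{n,\alpha,\gamma,a}$ exactly when $f_{n,\alpha,\gamma}(\cos a)=0$; since $\cos$ is a strictly decreasing bijection of $(0,\pi)$ onto $(-1,1)$, an $\TRPZ_n$-quadrangle exists for some $a$ if and only if $f_{n,\alpha,\gamma}$ has a root $\cos a$ lying in $(-1,1)$ that corresponds to an admissible $a$. So I would first locate the two roots of $f_{n,\alpha,\gamma}$ relative to $0$ and $\pm1$.

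In both regimes $\cot\alpha\cot\gamma>0$, because $\alpha,\gamma$ lie together in $(0,\pi/2)$ or together in $(\pi/2,\pi)$; hence $f_{n,\alpha,\gamma}(0)=-\cot\alpha\cot\gamma<0$, and as the parabola opens upward it has exactly one negative root $\cos a_{n,\alpha,\gamma}^+$ and one positive root $\cos a_{n,\alpha,\gamma}^-$. To decide which of these is realized I would use $\delta=2\pi-\alpha-\gamma$ together with the equivalence~\eqref{cond:deltaconvex} of Lemma~\ref{lem:shorta}. In case~\eqref{assert:phase}, $\alpha+\gamma<\pi$ forces $\delta>\pi$, hence $a>\pi/2$ and $\cos a<0$, so the realized root is the negative one and $a=a_{n,\alpha,\gamma}^+$; in case~\eqref{lem:convex}, $\alpha+\gamma>\pi$ forces $\delta<\pi$, hence $a<\pi/2$ and $\cos a>0$, so the realized root is the positive one and $a=a_{n,\alpha,\gamma}^-$.

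It then remains to determine when the selected root actually lies strictly inside $(-1,1)$. Since the negative (resp.\ positive) root is automatically below $0$ (resp.\ above $0$), the only binding constraint is at the far endpoint: in case~\eqref{assert:phase} I need $\cos a_{n,\alpha,\gamma}^+>-1$, equivalently $f_{n,\alpha,\gamma}(-1)>0$, and in case~\eqref{lem:convex} I need $\cos a_{n,\alpha,\gamma}^-<1$, equivalently $f_{n,\alpha,\gamma}(1)>0$. Substituting these into~\eqref{sharp}, and noting that the factor $\csc\alpha\,\csc\gamma\,\csc(\pi/n)$ is positive throughout, reduces the inequalities to $\sin(\alpha+\gamma-\pi/n)>0$ and $\sin(\pi/n+\alpha+\gamma)<0$, respectively. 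Resolving these sine signs against the ranges $\alpha+\gamma\in(0,\pi)$ and $\alpha+\gamma\in(\pi,2\pi)$ yields exactly $\alpha+\gamma>\pi/n$ and $\alpha+\gamma<2\pi-\pi/n$. The degenerate endpoints (the value $a=\pi/2$, or a root equal to $\pm1$ forcing $a\in\{0,\pi\}$) are excluded by Lemma~\ref{lem:shorta}.

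The main obstacle I anticipate is the root selection, not the trigonometric bookkeeping. Root counting alone is insufficient: already for small $n$ one checks that both roots of $f_{n,\alpha,\gamma}$ can simultaneously lie in $(-1,1)$, so an extra geometric input is indispensable both to pick out the correct root and to obtain the asserted uniqueness of $a$. The criterion~\eqref{cond:deltaconvex}, applied through the identity $\delta=2\pi-\alpha-\gamma$, is precisely this input: it ties the sign of $\cos a$ to the sign of $\pi-(\alpha+\gamma)$, and thereby simultaneously selects $a_{n,\alpha,\gamma}^{\pm}$ and discards the spurious companion root. Once that selection is justified, the claimed equivalences collapse to the single sine-sign computation above.
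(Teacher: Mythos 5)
There is a genuine gap, and it sits exactly where you located the difficulty: in the sufficiency (``if'') direction. Your opening reduction --- ``an $\TRPZ_n$-quadrangle exists for some $a$ if and only if $f_{n,\alpha,\gamma}$ has a root $\cos a$ in $(-1,1)$ corresponding to an admissible $a$'' --- is not what Theorem~\ref{thm:existence Pn} provides. That theorem is proved for a \emph{given} quadrangle $N v_0 v_1 v_2$ and characterizes when it is a $\PQ_{n,\alpha,\gamma,a}$; it does not assert that every root of the quadratic with the right sign is realized by an actual spherical $4$-gon. Turning a root into a quadrangle requires closing up the figure, i.e.\ verifying via Proposition~\ref{prop:vinberg}~\eqref{assert:vinberg} that the spherical 3-gon $v_0 v_1 v_2$ with induced inner angles $\alpha+\gamma$, $\theta-\alpha$, $\theta-\gamma$ (where $\theta=\angle v_2 v_0 N$) exists; in the paper this reduces to $\pi/2<\theta<\pi/2+\alpha+\gamma$, and the hypothesis $\alpha+\gamma>\fra{\pi}{n}$ is consumed precisely there, through the bound $\theta<\pi/2+\fra{\pi}{n}$ coming from the 3-gon $v_0 S v_2$. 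Your proposal never performs this step, and the ``extra geometric input'' you offer in its place --- equivalence~\eqref{cond:deltaconvex} of Lemma~\ref{lem:shorta} --- cannot do the job: that lemma is a statement about $\TRPZ_n$-quadrangles that already exist, so it legitimately selects which root is realized \emph{given} existence, but invoking it to conclude that a root \emph{is} realized is circular. That the closing-up step is not a formality is witnessed by the paper itself: in the concave regimes Lemma~\ref{lem:concavePn} must be proved separately, because a root of $f_{n,\alpha,\gamma}$ in $(0,1)$ does not by itself yield a tile (the additional geometric inequality $\cos a<-\cot(\pi/n)\tan(\alpha+\gamma)$ has to be checked), and Lemma~\ref{lem:jogai} exhibits angle pairs for which the quadratic has real roots yet no quadrangle exists.

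The rest of your argument is sound and close to the paper's own bookkeeping: $f_{n,\alpha,\gamma}(0)=-\cot\alpha\cot\gamma<0$ in both regimes, so the roots straddle $0$; the sign of $\cos a$ is pinned by $\delta=2\pi-\alpha-\gamma$ through~\eqref{cond:deltaconvex}; and the endpoint tests $f_{n,\alpha,\gamma}(\mp1)>0$, evaluated by~\eqref{sharp}, produce exactly $\alpha+\gamma>\fra{\pi}{n}$ and $\alpha+\gamma<2\pi-\fra{\pi}{n}$. Indeed your necessity argument (existence forces the realized root strictly inside $(-1,1)$, hence the sine-sign condition) is a legitimate alternative to the paper's only-if proof of part~\eqref{assert:phase}, which instead applies the last inequality of Proposition~\ref{prop:vinberg}~\eqref{assert:vinberg} to the two 3-gons cut off by the diagonal $N v_1$. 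Note also that the paper handles part~\eqref{lem:convex} not by redoing the computation but by the duality $\PQ_{n,\alpha,\gamma,a}\leftrightarrow \PQ_{n,\pi-\alpha,\pi-\gamma,\pi-a}$ (re-joining $N$ and $S$ to the opposite vertices and deleting the meridian edges), so once you repair the sufficiency gap in case~\eqref{assert:phase} --- by inserting the Vinberg-based existence verification for the 3-gon $v_0 v_1 v_2$ --- case~\eqref{lem:convex} can be inherited for free rather than re-derived.
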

 \begin{proof}
(Only-if part of Theorem~\ref{thm:phase1}~\eqref{assert:phase}). See
Figure~\ref{fig:cs}.  By $\alpha,\gamma\in (0,\, \pi/2)$, a segment $N
v_1$ is in the quadrangle.  To the two spherical 3-gons $v_0 N v_1$ and
$v_2 N v_1$, apply the last inequality of
Proposition~\ref{prop:vinberg}~\eqref{assert:vinberg}. Then $- \alpha +
\angle v_0 N v_1 + \angle N v_1 v_0 < \pi$ and $- \gamma + \angle v_1 N
v_2 + \angle v_2 v_1 N < \pi$. The sum of the left-hand sides of the two
inequalities is $-\alpha-\gamma+\beta +\delta$, and is less than $2\pi$.
By $\delta=2\pi - \alpha - \gamma$ and $\beta=2\pi / n$, we have $\pi>
\alpha+\gamma>\fra{\beta}2=\fra{\pi}{n}$.

  \medskip (If part of
  Theorem~\ref{thm:phase1}~\eqref{assert:phase}). For any
  $a\in(0,\,\pi)$, there is
a spherical isosceles 3-gon $v_0 N v_2$ such that $N v_0 = N v_2 = a$
  and $\angle v_0 N v_2 = 2\pi/n$.   By $\alpha,\gamma>0$, there is a
  vertex $v_1$ between $N v_0$ and $N v_2$ such that $\angle v_1 v_0 N =
  \alpha$ and $\angle N v_2 v_1=\gamma$. So, $v_0
  v_1$ does not cross to $N v_2$. Hence, by Theorem~\ref{thm:existence Pn},
  $Q_{n,\alpha,\gamma,a}$ exists if and only if there are a spherical 3-gon $v_0
  v_1 v_2$ and $a\in (0,\,\pi)$ such that $N v_0 = N v_2 = \pi - N v_1 =
  a$ and $f_{n,\alpha,\gamma}(\cos a) = 0$.

  Put four vertices $N,v_0,v_1,v_2$ such that $N$ is the north pole, $N
  v_0=N v_2= \pi - N v_1 =a$. Then, $v_0$ lies in the southern hemisphere if and only if $v_1$
  lies in the northern hemisphere.  As $\angle v_1 v_0 N=\alpha<\pi/2$
  by the premise, $v_0$
  lies in the southern hemisphere and $\pi>a>\pi/2$. The geodesic segment between $v_0$
  and $v_2$ lies inside the southern hemisphere.   Hence,
  the 3-gon $v_0 v_1 v_2$ is a subset of the 3-gon $v_0 N v_2$.  
  Let
  $\theta=\angle v_2 v_0 N = \angle N v_2 v_0$. 
  The inner angles at
  $v_1$ of  the 3-gon $v_0 v_1 v_2$ is $\alpha+\gamma=2\pi - \delta$,
  because the assumption $\alpha,\gamma\in (0,\,\pi/2)$ implies
  $\alpha+\gamma<\pi$.
  The other two inner angles of $v_0 v_1 v_2$ are $\theta -\alpha,
  \theta - \gamma$. Therefore,  by Proposition~\ref{prop:vinberg}~\eqref{assert:vinberg}, the existence of the 3-gon $v_0 v_1 v_2$ is equivalent to $2\theta>\pi,
  2\alpha<\pi, 2\gamma<\pi$, and  $2\theta -2\alpha -2\gamma<\pi$. Thus,
  $N v_0 v_1 v_2$ is a quadrangle if and only if
  $\pi/2<\theta<\pi/2+\alpha+\gamma$. As a spherical 3-gon $v_0 S v_2$
  exists, $2(\pi - \theta) + \beta>\pi$,
  i.e.,  $\theta<\pi/2+\beta/2 = \pi/2 + \pi/n$. The assumption $\pi/n<\alpha+\gamma$ implies
  $\theta <\pi/2 + \alpha+\gamma$.  Moreover, $\pi/2<\theta$, as $v_0$ and $v_2$ are in the southern hemisphere.
  Hence, if there is an $a\in (0,\,
  \pi)$ such that $f_{n,\alpha,\gamma}(\cos a) =0$, then 
  the 3-gon $v_0 v_1 v_2$ exists
  such that $N v_0 = N v_2 = \pi - N v_1 = a$.

 In this case, we show $a=a^+_{n,\alpha,\gamma}\in
  (0,\,\pi)$ exists and $f_{n,\alpha,\gamma}(\cos
  a^+_{n,\alpha,\gamma})$ is 0.  As $a\in(\pi/2,\, \pi)$, $\cos a\in (-1,\,0)$. By
  the assumption, $\pi>\alpha+\gamma>-\pi/n+\alpha+\gamma>0$. 
By \eqref{sharp},
  $f_{n,\alpha,\gamma}(-1)>0>f_{n,\alpha,\gamma}(0)=-\cot\alpha\cot\gamma$.
  The axis $axis(n,\alpha,\gamma)$ of the parabola
  $y=f_{n,\alpha,\gamma}(x)$ is positive, as $\alpha,\gamma\in(0,\, \pi/2)$. So the intersection of 
  $(-1,\,0)\times\{0\}$ and the parabola $y=f_{n,\alpha,\gamma}(x)$ is the smaller intersection
  point of $\Rset\times\{0\}$ and the parabola. Hence
  $a=a^+_{n,\alpha,\gamma}$.

\eqref{lem:convex}. $Q_{n,\alpha,\gamma,a}$ exists if and only if
$Q_{n,\pi-\alpha,\pi-\gamma,\pi - a}$ does so. It is because from any
$\T\in\TRPZ_n$, we obtain $\T'\in\TRPZ_n$, by joining the vertices
$N$ and $S$ to the opposite vertices $v_{2i+1}$ and $v_{2i}$
  respectively, and then deleting the $2n$ meridian edges $N v_{2i}$ and
  $S v_{2i+1}$  of $\T$. As $\pi - \alpha, \pi - \gamma \in
(0,\, \pi/2)$, Theorem~\ref{thm:phase1}~\eqref{assert:phase} implies
$\pi - a = a^+_{n,\pi - \alpha, \pi - \gamma}$.  Hence, $a=
a^-_{n,\alpha,\gamma}$, by the explanation at the beginning of
Subsection~\ref{subsec:discriminant} and $\arccos(x)=\pi -
\arccos(-x)$.  This establishes Theorem~\ref{thm:phase1}.  \qed
\end{proof}

\subsection{$\TRPZ_n$-quadrangles with $\alpha$ or $\gamma$  greater than $\pi$} \label{subsec:ambiguity}

Theorem~\ref{thm:near}~\eqref{assert:ineqdgn} and
Theorem~\ref{thm:near}~\eqref{assert:unique} correspond to an open set
 $B_n^{(4)}\cup B_n^{(8)}$ and a set $B_n^{(2)}\cup B_n^{(3)}\cup
 B_n^{(6)}\cup B_n^{(7)}$ of Theorem~\ref{thm:main}, respectively.  

 \begin{theorem}\label{thm:near}
  Suppose $\alpha>\pi$ or $\gamma>\pi$. Then we have the following:
\begin{enumerate}

\item \label{assert:ineqdgn}
There are more than one, actually, exactly \emph{two} $\TRPZ_n$-quadrangles $\PQ_{n,\alpha,\gamma,a}$, if and only if
\begin{eqnarray}
\frac{\pi}{2}<\alpha< \frac{3\pi}{4}-\frac{\pi}{2n}  &
\& &2\pi-\frac{\pi}{n}-\alpha< \gamma< \degenerate_n(\alpha); \ \mbox{or}
 \label{ineq:discriminant1} \\
\frac{\pi}{2}<\gamma< \frac{3\pi}{4}-\frac{\pi}{2n} &
\& & 2\pi-\frac{\pi}{n}-\gamma< \alpha< \degenerate_n(\gamma).
 \label{ineq:discriminant1 dual} 
\end{eqnarray}
      There is indeed a pair $(\alpha,\,\gamma)$ satisfying
\eqref{ineq:discriminant1} or \eqref{ineq:discriminant1 dual}. Here
      the edge-length $a$ is $a_{n,\alpha,\gamma}^+$ or $a_{n,\alpha,\gamma}^-$.

 \item \label{assert:unique}
There is a unique $\TRPZ_n$-quadrangle, if and only if
\begin{eqnarray}
\frac{\pi}{2}< \alpha< 2\pi-\frac{\pi}{n} - \gamma\ &\& \ &\pi<\gamma; \label{ineq:unique}\\
\gamma=\degenerate_n(\alpha) \ &\&\ &\frac{\pi}{2}<\alpha<\frac{3\pi}{4} - \frac{\pi}{2n};\label{eq:dgn}\\
\frac{\pi}{2}< \gamma< 2\pi-\frac{\pi}{n} - \alpha\ &\& \ &\pi<\alpha;\ \mbox{or} \label{ineq:unique:gamma}\\
\alpha=\degenerate_n(\gamma)\ &\&\ &\frac{\pi}{2}<\gamma<\frac{3\pi}{4} -
\frac{\pi}{2n}. \nonumber
\end{eqnarray}
	    If \eqref{ineq:unique} or \eqref{ineq:unique:gamma} hold, then the
	    edge-length  $a$ is
	    $a_{n,\alpha,\gamma}^+$. If $\gamma=\degenerate_n(\alpha)$, then
       the edge-length $a$ is 
       \begin{align*}
	      \pi -\arccos \left( \left( \sec\frac {\pi }{n}\right)   \left( \sin \frac {\pi }{n} +1 \right)
 \cot \alpha \right)=a_{n,\alpha,\gamma}^+=a_{n,\alpha,\gamma}^-.
       \end{align*}
       If $\alpha=\degenerate_n(\gamma)$, then
       the edge-length $a$ is 
       \begin{align*}
	      \pi -\arccos \left( \left(\sec\frac {\pi }{n} \right)   \left( \sin \frac {\pi }{n} +1 \right)
 \cot \gamma \right)=a_{n,\alpha,\gamma}^+=a_{n,\alpha,\gamma}^-.
       \end{align*}
\end{enumerate}
\end{theorem}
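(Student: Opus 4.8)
The plan is to reduce the counting of $\TRPZ_n$-quadrangles $\PQ_{n,\alpha,\gamma,a}$ to a root-localization problem for the quadratic $f_{n,\alpha,\gamma}$. First I would invoke Lemma~\ref{lem:shorta}~\eqref{assert:alphagamma}: the hypothesis $\alpha>\pi$ or $\gamma>\pi$ forces either $\pi/2<\alpha<\pi<\gamma<3\pi/2$ or its $\alpha$-$\gamma$ transpose. Since $f_{n,\alpha,\gamma}=f_{n,\gamma,\alpha}$ and $\degenerate_n$ enters the statement symmetrically, the transposition $\alpha\leftrightarrow\gamma$ carries \eqref{ineq:discriminant1} to \eqref{ineq:discriminant1 dual}, \eqref{ineq:unique} to \eqref{ineq:unique:gamma}, and \eqref{eq:dgn} to the remaining boundary condition; so it suffices to treat $\pi/2<\alpha<\pi<\gamma<3\pi/2$, and I would transport the conclusions back at the very end.

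By Theorem~\ref{thm:existence Pn} together with Definition~\ref{def:ispdw Q}, for each admissible $a$ a $\TRPZ_n$-quadrangle exists if and only if $f_{n,\alpha,\gamma}(\cos a)=0$; and $a\mapsto\cos a$ is a strictly decreasing bijection of $(0,\pi)\setminus\{\pi/2\}$ onto $(-1,1)\setminus\{0\}$. Hence the number of $\TRPZ_n$-quadrangles equals the number of roots of $f_{n,\alpha,\gamma}$ in the open interval $(-1,1)$, with $0$ never a root since the constant term $-\cot\alpha\cot\gamma\neq0$. I would then record the controlling data of the upward parabola $y=f_{n,\alpha,\gamma}(x)$: the value $f_{n,\alpha,\gamma}(0)=-\cot\alpha\cot\gamma>0$ (because $\cot\alpha<0<\cot\gamma$ in this range), so the two roots share a sign; the discriminant, real roots occurring iff $\gamma\le\degenerate_n(\alpha)$ by Lemma~\ref{lem:degenerate}~\eqref{assert:dgn}; and, from \eqref{sharp}, the boundary signs $f_{n,\alpha,\gamma}(1)>0\iff\alpha+\gamma>2\pi-\pi/n$ and $f_{n,\alpha,\gamma}(-1)>0\iff\alpha+\gamma<2\pi+\pi/n$, the latter holding throughout since the admissible region satisfies $\alpha+\gamma<2\pi$.

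The proof then splits on the sign of $f_{n,\alpha,\gamma}(1)$ and on the discriminant. When $f_{n,\alpha,\gamma}(1)<0$, the point $x=1$ lies strictly between the two roots, so the smaller root falls in $(0,1)$ while the larger exceeds $1$: exactly one quadrangle, with $a=a_{n,\alpha,\gamma}^{+}$, which is precisely the regime \eqref{ineq:unique}. When $\gamma=\degenerate_n(\alpha)$ the discriminant vanishes and the double root equals the axis \eqref{def:axis}; substituting the boundary relation $\cot\gamma=-\cot\alpha\,(1+\sin\tfrac\pi n)^2\sec^2\tfrac\pi n$ from the Claim inside the proof of Lemma~\ref{lem:degenerate} collapses the axis to $-(\sec\tfrac\pi n)(\sin\tfrac\pi n+1)\cot\alpha$, yielding the closed form \eqref{degenerate sol3} and a single quadrangle, i.e. \eqref{eq:dgn}. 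When $f_{n,\alpha,\gamma}(1)>0$ and the discriminant is positive, the two real roots lie on the same side of $1$, and the whole content of part~\eqref{assert:ineqdgn} is to decide which side.

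The main obstacle is exactly this last localization: separating ``both roots in $(-1,1)$'' (two quadrangles, region \eqref{ineq:discriminant1}) from ``both roots $>1$'' (no quadrangle). I would resolve it by tracking the axis \eqref{def:axis}: at the tangency abscissa $\alpha=3\pi/4-\pi/(2n)$ one has simultaneously $f_{n,\alpha,\gamma}(1)=0$ and vanishing discriminant, so the double root is exactly $1$ and the axis equals $1$ there. Lemma~\ref{lem:degenerate}~\eqref{usual}--\eqref{unusual} (convexity of $\degenerate_n$, its tangent line $\alpha+\gamma=2\pi-\pi/n$, and the sandwiching $2\pi-\pi/n-\alpha<\degenerate_n(\alpha)<2\pi-\alpha$) then forces the axis below $1$ for $\alpha<3\pi/4-\pi/(2n)$ and above $1$ for $\alpha>3\pi/4-\pi/(2n)$, and this is precisely where I would appeal to Lemma~\ref{lem:jogai} to obtain the ``both zeros $>1$'' conclusion in the excluded region and to rule out the spurious case of both roots below $-1$ (incompatible, when the discriminant is nonnegative, with $\gamma>\pi$). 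The nonemptiness asserted in \eqref{assert:ineqdgn} follows from Lemma~\ref{lem:degenerate}~\eqref{unusual}, which makes the $\gamma$-interval of \eqref{ineq:discriminant1} nonempty for $\pi/2<\alpha<3\pi/4-\pi/(2n)$. Assembling the cases over $\pi/2<\alpha<\pi<\gamma<3\pi/2$ and then applying the $\alpha\leftrightarrow\gamma$ symmetry covers \eqref{ineq:discriminant1 dual}, \eqref{ineq:unique:gamma}, and the remaining boundary curve, completing both parts.
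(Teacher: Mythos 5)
Your reduction has the right skeleton, and most of your dictionary matches the paper's proof exactly: $f_{n,\alpha,\gamma}(0)=-\cot\alpha\cot\gamma>0$ in the range forced by Lemma~\ref{lem:shorta}~\eqref{assert:alphagamma}, the equivalence $f_{n,\alpha,\gamma}(1)>0\iff\alpha+\gamma>2\pi-\pi/n$ via \eqref{sharp}, the discriminant criterion $\gamma\le\degenerate_n(\alpha)$ from Lemma~\ref{lem:degenerate}~\eqref{assert:dgn}, the identification $a=a_{n,\alpha,\gamma}^{+}$ when $f_{n,\alpha,\gamma}(1)<0$, the closed form of the double root, and the role of Lemma~\ref{lem:jogai} past the tangency abscissa. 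But there is a genuine gap at your very first step. Theorem~\ref{thm:existence Pn} is a statement about a quadrangle that is \emph{already given} (``a quadrangle is an $\TRPZ_n$-quadrangle \dots if and only if $f_{n,\alpha,\gamma}(\cos a)=0$''), and Fact~\ref{fact:q}/Definition~\ref{def:ispdw Q} supply only \emph{at most one} $\PQ_{n,\alpha,\gamma,a}$, never existence. So a root $x\in(0,1)$ of $f_{n,\alpha,\gamma}$ does not automatically produce a spherical $4$-gon with angles $\alpha,2\pi/n,\gamma,2\pi-\alpha-\gamma$: in the concave regime one must verify that the spherical $3$-gon $v_0v_1v_2$ exists, which the paper does in Lemma~\ref{lem:concavePn} by checking Vinberg's inequalities (Proposition~\ref{prop:vinberg}~\eqref{assert:vinberg}) and reducing them to $\cos a<-\cot(\pi/n)\tan(\alpha+\gamma)$, proved via \eqref{goal1} and \eqref{goal2}. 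Your proposal never cites or reconstructs this, so the ``if'' directions of both parts --- that the stated inequalities actually yield one, respectively two, tiles --- are unproven; equating ``number of quadrangles'' with ``number of roots'' is precisely the step that lemma licenses.

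Two further repairs. First, the paper opens by showing $a<\pi/2$ (if $a\ge\pi/2$, equivalence~\eqref{cond:deltaconvex} of Lemma~\ref{lem:shorta} gives $\delta>\pi$, hence $\alpha,\gamma<\pi$, contradicting the hypothesis), so the correct localization interval is $(0,1)$, not $(-1,1)$. Your handling of negative roots is off: Lemma~\ref{lem:jogai} says nothing about roots below $-1$ or in $(-1,0)$, and a root in $(-1,0)$ \emph{would} be counted by your reduction although it corresponds to no tile. The easy fix is that $3\pi/2<\alpha+\gamma<2\pi$ gives $\cot\alpha+\cot\gamma=\sin(\alpha+\gamma)/(\sin\alpha\sin\gamma)>0$, so the axis \eqref{def:axis} is positive and real roots are both positive. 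Second, your claim that Lemma~\ref{lem:degenerate}~\eqref{usual}--\eqref{unusual} ``forces the axis below $1$ for $\alpha<3\pi/4-\pi/(2n)$ and above $1$ for $\alpha>3\pi/4-\pi/(2n)$'' is not a proof: the axis depends on $\gamma$ as well, and those items concern only the curve $\gamma=\degenerate_n(\alpha)$. The half you need for part~\eqref{assert:ineqdgn} is Claim~\ref{claim:degenerate}~\eqref{assert:2c}, proved in the paper by a separate trigonometric computation (the $\arccot$ inequality \eqref{ineq:u} and the bound $\tan(\alpha-\pi/2)\tan(\alpha-\pi/2+\pi/n)\le1$), while the opposite comparison $axis(n,\alpha,\gamma)>1$ in the excluded region rests on the identity $(\dag)$ and monotonicity inside Lemma~\ref{lem:jogai}. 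Your tangency observation (double root equal to $1$ at $\alpha=3\pi/4-\pi/(2n)$) is consistent with all of this, but by itself it does not separate the two sides.
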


To prove Theorem~\ref{thm:near}, we prove the following lemma. 
\begin{lemma}\label{lem:concavePn}
If some of condition~\eqref{ineq:discriminant1},
	condition~\eqref{ineq:unique}, condition~\eqref{eq:dgn}
	and the three conditions with $\alpha$ and $\gamma$ swapped hold,
	then for every $a\in (0,\,\pi/2)$ with $f_{n,\alpha,\gamma}(\cos
	a)=0$, there exists a $\PQ_{n,\alpha,\gamma,a}$-quadrangle.
\end{lemma}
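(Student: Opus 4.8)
The plan is to construct the tile explicitly via the coordinate system of Theorem~\ref{thm:phia} and then read off its inner angles. First I would reduce to a single configuration of $\alpha,\gamma$. Since $\alpha>\pi$ or $\gamma>\pi$, the implication~\eqref{assert:alphagamma} of Lemma~\ref{lem:shorta} forces either $\pi/2<\alpha<\pi<\gamma<3\pi/2$ or the same with $\alpha,\gamma$ interchanged; one checks that each of \eqref{ineq:discriminant1}, \eqref{ineq:unique}, \eqref{eq:dgn} lands in the first configuration while the three swapped conditions land in the second. As $f_{n,\alpha,\gamma}$ is symmetric in $\alpha,\gamma$ and the whole hypothesis is swap-invariant, I may assume $\pi/2<\alpha<\pi<\gamma<3\pi/2$.

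Next, fix $a\in(0,\pi/2)$ with $f_{n,\alpha,\gamma}(\cos a)=0$ and define $\varphi\in(-\pi,\pi)\setminus\{0\}$ by equation~\eqref{eq:u}, equivalently $\cot(\varphi/2)=-\cot\gamma/\cos a$, and $\varphi'$ by \eqref{eq:v}. A direct half-angle computation shows that $\cot\bigl((\varphi+\varphi')/2\bigr)=\cot(\pi/n)$ is algebraically identical to $f_{n,\alpha,\gamma}(\cos a)=0$, so \eqref{eq:w} holds (the spurious branch $\varphi+\varphi'=2\pi/n-2\pi$ is excluded by the signs $\varphi<0<\varphi'$ established below). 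The aim is then to prove $\langle\varphi,a\rangle\in A_n^{(1)}=\left(\frac{2\pi}{n}-\pi,\,0\right)\times\left(0,\,\frac{\pi}{2}\right)$ of Definition~\ref{def:ani}. Granting this, the onto part of Theorem~\ref{thm:phia} produces a tiling $\T\in\TRPZ_n$ with coordinate $\langle\varphi,a\rangle$; its tile $T$ has meridian edge-length $a$ and, by inverting \eqref{eq:u}, opposite inner angle $\gamma$. By Theorem~\ref{thm:existence Pn} the remaining opposite angle $\alpha^{\sharp}$ of $T$ satisfies $f_{n,\alpha^{\sharp},\gamma}(\cos a)=0$; since this equation is linear in $\cot\alpha^{\sharp}$ and $\cot$ is injective on $(\pi/2,\pi)$, we get $\alpha^{\sharp}=\alpha$, whence $T=\PQ_{n,\alpha,\gamma,a}$ exists.

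It remains to establish $\frac{2\pi}{n}-\pi<\varphi<0$, which is the heart of the argument. The upper bound is immediate: $\gamma\in(\pi,3\pi/2)$ gives $\cot\gamma>0$ and $a<\pi/2$ gives $\cos a>0$, so $\cot(\varphi/2)<0$ and hence $\varphi<0$. The lower bound $\varphi>\frac{2\pi}{n}-\pi$ is equivalent, after applying the decreasing function $\cot$ on $(-\pi/2,0)$, to the single inequality $\cot\gamma>\tan(\pi/n)\cos a$. This is where the region hypotheses enter substantively: substituting the root $\cos a=a^{\pm}_{n,\alpha,\gamma}$ from Subsection~\ref{subsec:discriminant} and invoking the bounds $2\pi-\pi/n-\alpha<\gamma\le\degenerate_n(\alpha)$ together with the discriminant characterization of Lemma~\ref{lem:degenerate}~\eqref{assert:dgn}, the inequality reduces to a trigonometric estimate along the boundary curves $\gamma=\degenerate_n(\alpha)$ and $\alpha+\gamma=2\pi-\pi/n$.

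I expect this last inequality --- verifying $\cot\gamma>\tan(\pi/n)\cos a$ uniformly over the regions \eqref{ineq:discriminant1}, \eqref{ineq:unique}, \eqref{eq:dgn} and for \emph{both} roots $a^{-}_{n,\alpha,\gamma},a^{+}_{n,\alpha,\gamma}$ when the discriminant is positive --- to be the main obstacle, since it couples the explicit square-root expression for $\cos a$ with the transcendental boundary $\degenerate_n$. A cleaner alternative that avoids the coordinate map is to build $N v_0 v_1 v_2$ by hand: place $N,v_0,v_2$ at colatitude $a<\pi/2$ with $\angle v_0 N v_2=2\pi/n$, locate $v_1$ at colatitude $\pi-a$ from the two angle conditions, and then verify directly that $v_1$ lies in the southern hemisphere with $0<\delta<\pi$ by \eqref{cond:deltaconvex}, that no edge contains a pair of antipodal points, and that the reflex angle $\gamma$ yields a simple concave topological disk inside a hemisphere. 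However, this route still hinges on the same range inequality, so either way the difficulty concentrates in the same place.
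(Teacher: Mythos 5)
Your framework is sound and genuinely different from the paper's: you transport the problem to the coordinate system of Theorem~\ref{thm:phia}, and your reduction is correct as far as it goes --- the half-angle identity $\cot(\varphi/2)=-\cot\gamma/\cos a$, $\cot(\varphi'/2)=-\cot\alpha/\cos a$ does convert $f_{n,\alpha,\gamma}(\cos a)=0$ into $\cot\bigl((\varphi+\varphi')/2\bigr)=\cot(\pi/n)$, and with $\varphi\in(-\pi,0)$, $\varphi'\in(0,\pi)$ the branch $k=0$ is forced, so existence of $\PQ_{n,\alpha,\gamma,a}$ becomes equivalent to $\varphi>2\pi/n-\pi$, i.e.\ to $\cot\gamma>\tan(\pi/n)\cos a$. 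But at exactly this point you stop: you declare the inequality ``the main obstacle'' and do not prove it. That inequality \emph{is} the substantive content of the lemma --- everything before it is bookkeeping --- so as submitted the proposal has a genuine gap, not a finished proof. (Two smaller loose ends: you invoke ``the signs $\varphi<0<\varphi'$ established below'' but only establish $\varphi<0$; and pinning down $\gamma^{\sharp}=\gamma$ rather than $\gamma\pm\pi$ when ``inverting \eqref{eq:u}'' needs the geometric fact that $a<\pi/2$ and $\varphi<0$ force the reflex angle at $v_2$, which you gloss over.)

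For comparison: the paper does not use the coordinate system at all. It builds $N v_0 v_1 v_2$ directly, reduces existence to that of the 3-gon $v_0v_1v_2$ via Vinberg's angle criterion (Proposition~\ref{prop:vinberg}~\eqref{assert:vinberg}), computes $\cot\theta=\cos a\tan(\pi/n)$, arrives at the inequality $\cos a<-\cot(\pi/n)\tan(\alpha+\gamma)$~(\eqref{uvwx}), and then --- this is the step you are missing an analogue of --- verifies it for \emph{both} roots of the quadratic by a parabola-position argument: $f_{n,\alpha,\gamma}$ is positive at the threshold value~(\eqref{goal1}) and the axis lies left of it~(\eqref{goal2}), using \eqref{range17}, \eqref{multiple} and Lemma~\ref{lem:degenerate}~\eqref{unusual}. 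Ironically, your own threshold yields to the same trick even more easily, with no square roots and no contact with $\degenerate_n$: setting $t:=\cot(\pi/n)\cot\gamma$, one computes $f_{n,\alpha,\gamma}(t)=-\cot\alpha\cot\gamma\,\csc^{2}(\pi/n)>0$ and $t-axis(n,\alpha,\gamma)=\tfrac12\cot(\pi/n)(\cot\gamma-\cot\alpha)>0$, both immediate from $\cot\alpha<0<\cot\gamma$ in the configuration $\pi/2<\alpha<\pi<\gamma<3\pi/2$ (which your hypotheses guarantee via \eqref{assert:alphagamma} and \eqref{multiple}); hence every root of $f_{n,\alpha,\gamma}$ lies below $t$, and in particular any root $\cos a\in(0,1)$ satisfies $\cot\gamma>\tan(\pi/n)\cos a$. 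So the obstacle you feared dissolves in two lines and your route could be completed --- arguably more cleanly than the paper's --- but you did not take that step, and without it the lemma is not proved.
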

  \begin{proof}
   The
  assumption implies
  \begin{align}
   0< \alpha+\gamma-\frac{3\pi}2 <\frac{\pi}2. \label{range17}
  \end{align}
Consider a  quadrangle $N v_0 v_1 v_2$ such that the
inner angle between two edges of length $a$ is $\beta=2\pi/n$, and the
two inner angles neighboring to $\beta$ are $\alpha$ and $\gamma$. We
 prove  $N v_0 v_1 v_2$ is indeed a \emph{spherical}
4-gon.  $\angle v_0 N v_2 = 2\pi/n<\pi$ and $v_0 N = v_2 N
   =a<\pi/2$. So, $\theta:=\angle v_2 v_0 N$ is strictly between $0$ and $\pi/2$.

  As a spherical 3-gon $v_0 N v_2$ clearly exists, a quadrangle
  $N v_0 v_1 v_2$ exists, if and only if a spherical 3-gon
$v _{0}v _{1}v _{2}$ exists. The last condition holds, if and only if $\alpha -
\gamma + \delta <\pi$, $ -\alpha + \gamma +\delta <\pi$, $(\alpha
-\theta )+ ( \gamma -\theta )-\delta <\pi$, and $(\alpha -\theta )+(
\gamma -\theta )+\delta >\pi$, by
Proposition~\ref{prop:vinberg}~\eqref{assert:vinberg}. $\alpha,\gamma
>\pi/2$ holds from the assumption of this lemma.  So,
$\alpha+\gamma+\delta=2\pi$ implies the first and the second of the four
inequalities.  The last inequality follows from $0<\theta<\pi/2$ and $\alpha+\gamma+\delta=2\pi$. The third inequality is $\alpha+\gamma
  -3\pi/2<\theta$. By $0<\theta<\pi/2$ and the range~\eqref{range17} of $\alpha+\gamma$,
\begin{align*}
  \mbox{ the quadrangle $N v_0 v_1
  v_2$ exists}
\iff
-\tan( \alpha+\gamma)>\cot\theta. 
  \end{align*}

Here $\cot\theta = \cos a \tan(\pi/n)$.  To see this,
represent the length of the base edge of a spherical isosceles 3-gon
 $v_0 N v_2$, in terms of $a,n$,  by using spherical cosine
 law~(Proposition~\ref{prop:vinberg}~\eqref{assert:scl}). By applying the spherical cosine law for angles~(Proposition~\ref{prop:vinberg}~\eqref{assert:scla})
 to  $v _{0} N v _{2}$, we get $\cos \left( 2\,{\fra {\pi }{n}} \right)
   =-  \cos^2 \theta +  \sin^2 \theta  \left(   \cos^2 a  +  \sin^2 a
   \cos \left( 2 \fra {\pi }{n} \right)  \right) $. As $\cos(\pi/n)>0$ by $n \ge 3$, $$\sin \theta=
 \frac{\cos\frac{\pi}{n}}{\sqrt{\sin^2\frac{\pi}{n} \cos^2 a + \cos^2 \frac{\pi}{n}}}.$$ So, as
 $\cos a > 0$ by the premise, $0<\theta<\pi/2$ implies 
 $\cot\theta = \cos a \tan(\fra{\pi}{n})$.

Hence, by Theorem~\ref{thm:existence Pn}, Lemma~\ref{lem:concavePn} is equivalent to: For every $n\ge3$, if
\eqref{ineq:discriminant1}, \eqref{ineq:unique}, or \eqref{eq:dgn}, then for every $a\in
(0,\, \pi/2)$ with $f_{n,\alpha,\gamma}(\cos a)=0$, we have
\begin{align}
  \cos a< -\cot \frac{\pi
  }{n}\tan \left( \alpha+\gamma  \right). \label{uvwx}
\end{align}

When condition~\eqref{ineq:unique} holds,  $-\tan( \alpha+\gamma
 )>\tan(\pi /n)$, and thus inequality~\eqref{uvwx} holds.

Assume condition~\eqref{ineq:discriminant1} or condition~\eqref{eq:dgn}.  By
   $\degenerate_n$,
 \begin{align}
 \frac{\pi}{2}<\alpha< \frac{3\pi}{4}-\frac{\pi}{2n},\quad\pi< \frac{5\pi}{4}-\frac{\pi}{2n}<\gamma< \frac{3\pi}{2}.\label{multiple}
 \end{align}
Thus $f_{n,\alpha,\gamma}(0)=-\cot\alpha\cot\gamma>0$. So two solutions
 of the quadratic equation $f_{n,\alpha,\gamma}(x)=0$ are of the same
 sign. Hence inequality~\eqref{uvwx} follows from
    \begin{align}
n\ge3,\ \eqref{ineq:discriminant1}\ \&\ f_{n,\alpha,\gamma}(x)=0\
   \Longrightarrow\ x < -\cot \frac {\pi }{n}\tan \left( \alpha+\gamma
   \right). \label{afo}
  \end{align}
As $f_{n,\alpha,\gamma}(x)$  is quadratic, condition~\eqref{afo} is
  equivalent to the conjunction of
 \begin{align}
& f_{n,\alpha,\gamma}\left(-\cot \frac {\pi
 }{n}{\tan ( \alpha+\gamma)}\right)>0,\label{goal1}\end{align}
and the condition  $axis(n,\alpha,\gamma)< - \cot (\pi/n){\tan ( \alpha+\gamma)}$:
  \begin{align}
&\frac{1}{2}\cot\frac{\pi}{n}(\cot\alpha+\cot\gamma) <-\cot \frac {\pi
  }{n}{\tan ( \alpha+\gamma)}\label{goal2}.
 \end{align}
 Inequality~\eqref{goal1} is proved as follows: By calculation, the left-hand
   side  is 
   \begin{align*}
\frac{   
    \sin \left( \alpha+\gamma  + \fra{\pi}{n}\right)
    \sin \left( \alpha+\gamma  - \fra{\pi}{n}\right)
    }{{\cos
 \alpha  \sin \alpha  \cos  \gamma \sin  \gamma  } \left( \tan \alpha \tan
    \gamma -1 \right) ^2 \sin ^2( \pi /n)}.
   \end{align*}
   This is positive, because
 \eqref{multiple} implies $\cos \alpha
   \sin\alpha\cos\gamma\sin\gamma<0$, and
   Lemma~\ref{lem:degenerate}~\eqref{unusual} implies 
$ 2\pi - \pi/n < \alpha+\gamma <2\pi$.  So \eqref{goal1} is established.

In inequality~\eqref{goal2}, the right-hand side divided by the left-hand side has
absolute value $M=|2\sin \alpha\sin \gamma /\cos(\alpha + \gamma
)|$. The right-hand side of \eqref{goal2} is positive by
   \eqref{ineq:discriminant1} and $\alpha+\gamma+\delta=2\pi$. So, we have only to show $M>1$.
The numerator $2\sin\alpha\sin\gamma$ is negative by \eqref{multiple}
and the denominator $\cos(\alpha+\gamma)$ is positive by
\eqref{ineq:discriminant1}. So $M>1$ is equivalent to
$\cos(\alpha-\gamma)<0$. By \eqref{multiple}, $\pi/2=(\fra{5\pi}{4}-
\fra{\pi}{(2n)}) - \left(\fra{3\pi}{4}-\fra{\pi}{(2n)}\right) <
\gamma-\alpha< \fra{3\pi}{2} - \fra{\pi}{2}=\pi$. This proves inequality~\eqref{goal2} and thus the implication~\eqref{afo}. So the spherical 3-gon $v_0 v_1
  v_2$ exists.  This establishes Lemma~\ref{lem:concavePn}.\qed\end{proof}

 By calculation,
 \begin{align*}
(\dag)\qquad axis(n,\alpha,\degenerate_n(\alpha))
 =-\sec\frac{\pi}{n}\left(\sin\frac{\pi}{n} +1\right)\cot\alpha.\end{align*}

\begin{lemma}\label{lem:jogai} Let $n=3,4,5,\ldots$.
Suppose \[
\frac{3\pi}{4} - \frac{\pi}{2n}<\alpha<\pi<\gamma<\frac{5\pi}{4} - \frac{\pi}{2n},\quad
2\pi - \frac{\pi}{n}<\alpha+\gamma,\ \mbox{and}\ 
\gamma<\degenerate_n(\alpha).\]
Then there is no $a\in(0,\,\pi)$ such that $f_{n,\alpha,\gamma}(\cos a)=0$.
\end{lemma}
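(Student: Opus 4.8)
The plan is to show that, under the stated hypotheses, the quadratic $f_{n,\alpha,\gamma}$ has two real roots that are \emph{both strictly larger than $1$}. Since $\cos a\in(-1,\,1)$ for every $a\in(0,\,\pi)$, this immediately rules out any solution of $f_{n,\alpha,\gamma}(\cos a)=0$. First I would observe that the hypotheses force $\tfrac{\pi}{2}<\alpha<\pi<\gamma<\tfrac{3\pi}{2}$ (using $n\ge3$ and $\tfrac{5\pi}{4}-\tfrac{\pi}{2n}<\tfrac{3\pi}{2}$), so Lemma~\ref{lem:degenerate} applies; since $\gamma<\degenerate_n(\alpha)$, Lemma~\ref{lem:degenerate}~\eqref{assert:dgn} gives $\Delta_{n,\alpha,\gamma}>0$, whence $f_{n,\alpha,\gamma}$ has two distinct real roots $axis(n,\alpha,\gamma)\pm\tfrac12\cot\frac{\pi}{n}\sqrt{\Delta_{n,\alpha,\gamma}}$ (recall the computation opening Subsection~\ref{subsec:discriminant} and the definition~\eqref{def:axis}).

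Next I would determine the sign of $f_{n,\alpha,\gamma}(1)$ from \eqref{sharp}. As $n\ge3$ gives $\csc\frac{\pi}{n}>0$, as $\alpha\in(\tfrac{\pi}{2},\,\pi)$ gives $\csc\alpha>0$, and as $\gamma\in(\pi,\,\tfrac{3\pi}{2})$ gives $\csc\gamma<0$, the product $\csc\alpha\csc\gamma\csc\frac{\pi}{n}$ is negative. Moreover $\alpha+\gamma>2\pi-\tfrac{\pi}{n}$ yields $\tfrac{\pi}{n}+\alpha+\gamma>2\pi$, while $\alpha<\pi$ and $\gamma<\tfrac{5\pi}{4}-\tfrac{\pi}{2n}$ give $\tfrac{\pi}{n}+\alpha+\gamma<\tfrac{9\pi}{4}+\tfrac{\pi}{2n}<3\pi$ for $n\ge3$; hence $\tfrac{\pi}{n}+\alpha+\gamma\in(2\pi,\,3\pi)$ and $\sin(\tfrac{\pi}{n}+\alpha+\gamma)>0$. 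Substituting these signs into \eqref{sharp} gives $f_{n,\alpha,\gamma}(1)>0$, so $1$ lies outside the closed interval bounded by the two roots.

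It then remains to show $axis(n,\alpha,\gamma)>1$; combined with $f_{n,\alpha,\gamma}(1)>0$ this forces both roots to exceed $1$, because if the smaller root were $\le1$ then $1$ would lie between the roots and $f_{n,\alpha,\gamma}(1)\le0$ would follow. Here I would use monotonicity in $\gamma$: from \eqref{def:axis} and $\cot\frac{\pi}{n}>0$ we get $\partial_\gamma\, axis(n,\alpha,\gamma)=-\tfrac12\cot\frac{\pi}{n}\csc^2\gamma<0$, so $axis$ strictly decreases in $\gamma$ on $(\pi,\,\tfrac{3\pi}{2})$. Since $\gamma<\degenerate_n(\alpha)$, it suffices to prove $axis(n,\alpha,\degenerate_n(\alpha))\ge1$ on the relevant $\alpha$-range. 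By the identity $(\dag)$, $axis(n,\alpha,\degenerate_n(\alpha))=-\sec\frac{\pi}{n}(\sin\frac{\pi}{n}+1)\cot\alpha$, a positive multiple of $-\cot\alpha$, hence strictly increasing in $\alpha$ on $(\tfrac{\pi}{2},\,\pi)$. A direct half-angle computation (writing $\phi=\pi/(2n)$ and using $1+\sin2\phi=(\sin\phi+\cos\phi)^2$ together with $\tan(\pi/4-\phi)=(\cos\phi-\sin\phi)/(\cos\phi+\sin\phi)$) shows this value equals exactly $1$ at $\alpha=\tfrac{3\pi}{4}-\tfrac{\pi}{2n}$. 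As the hypothesis gives $\alpha>\tfrac{3\pi}{4}-\tfrac{\pi}{2n}$, monotonicity yields $axis(n,\alpha,\degenerate_n(\alpha))>1$, and therefore $axis(n,\alpha,\gamma)>axis(n,\alpha,\degenerate_n(\alpha))>1$.

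Putting the pieces together, every zero of $f_{n,\alpha,\gamma}$ exceeds $1$, so no $a\in(0,\,\pi)$ satisfies $f_{n,\alpha,\gamma}(\cos a)=0$. The hard part will be the axis estimate: one must recognize that the boundary abscissa $\alpha=\tfrac{3\pi}{4}-\tfrac{\pi}{2n}$ is precisely where the double root on the curve $\gamma=\degenerate_n(\alpha)$ equals $1$, and then chain the two monotonicities — $axis$ increasing in $\alpha$ along that curve and decreasing in $\gamma$ as one enters the region. By contrast, the discriminant step and the sign bookkeeping in \eqref{sharp} are routine.
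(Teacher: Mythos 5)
Your proposal is correct and follows essentially the same route as the paper's proof: both reduce the lemma to the two facts $f_{n,\alpha,\gamma}(1)\ge 0$ (via \eqref{sharp} and the bound $2\pi<\alpha+\gamma+\frac{\pi}{n}<3\pi$) and $axis(n,\alpha,\gamma)>1$, the latter by the identical monotonicity chain — $axis$ decreasing in $\gamma$ down to the curve $\gamma=\degenerate_n(\alpha)$, then $(\dag)$ increasing in $\alpha$ along that curve, with value exactly $1$ at the corner $\alpha=\frac{3\pi}{4}-\frac{\pi}{2n}$. Your only deviations are cosmetic: you invoke Lemma~\ref{lem:degenerate}~\eqref{assert:dgn} to get $\Delta_{n,\alpha,\gamma}>0$ (superfluous, since complex roots would settle the matter anyway) and you obtain the strict inequality $f_{n,\alpha,\gamma}(1)>0$ where the paper records only $f_{n,\alpha,\gamma}(1)\ge0$, which it pairs with the slightly weaker conclusion that every real zero is $\ge1$ — still sufficient because $\cos a<1$ on $(0,\pi)$.
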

\begin{proof} We  prove that $f_{n,\alpha,\gamma}(x)=0\implies x\ge 1$. We have only to verify $axis(n, \alpha, \gamma)>1$ and $f_{n,\alpha,\gamma}(1)\ge 0$.

We show $axis(n, \alpha, \gamma)>1$.   By the premise,
 $\cot(\pi/n)>0$ and $\pi/2<\alpha<\pi$. By \eqref{assert:dgn}
 and \eqref{usual} of  Lemma~\ref{lem:degenerate}, we have
 $\pi<\degenerate_n(\alpha)<3\pi/2$. So, by the premise, $\pi<\gamma<\degenerate_n(\alpha)<3\pi/2$.
Thus, by \eqref{def:axis},
 $axis(n,\alpha,\gamma)>axis(n,\alpha,\degenerate_n(\alpha))$. By (\dag)
 and  $\pi/2<\fra{3\pi}{4} - \fra{\pi}{(2n)}<\alpha<\pi$,
$axis(n,\alpha,\degenerate_n(\alpha))> axis(n,\,3\pi/4 -
 \pi/(2n),\,\degenerate_n(3\pi/4 -
 \pi/(2n)))$. The last is 1 by calculation.
Hence $axis(n, \alpha, \gamma)>1$. 

 Next, we  verify  $f_{n,\alpha,\gamma}( 1)\ge 0$.
 By the first premise $\fra{3\pi}{4} -
 \fra{\pi}{(2n)}<\alpha<\pi<\gamma<\fra{5\pi}{4} - \fra{\pi}{(2n)}$, we have
 $\alpha+\gamma+\pi/n<2\pi+\pi/4+\pi/(2n)$. So, by $n\ge3$ and the
 second premise,  $2\pi<\alpha+\gamma+\pi/n< 2\pi+5\pi/12$. Thus, by the first
 premise and \eqref{sharp}, $f_{n,\alpha,\gamma}( 1)\ge 0$.
This completes the proof of Lemma~\ref{lem:jogai}. \qed
\end{proof}

 \emph{Proof of Theorem~\ref{thm:near}}.\/ Let $\alpha>\pi$ or
$\gamma>\pi$.  The edge-length $a$ is smaller than $\pi/2$. Otherwise,
equivalence~\eqref{cond:deltaconvex} of Lemma~\ref{lem:shorta}
implies $\delta>\pi$. So $\alpha$ and $\gamma$ are both less than $\pi$,
which is absurd. So $0<\cos a<1$.

Theorem~\ref{thm:near}~\eqref{assert:ineqdgn} is proved as follows:
By Theorem~\ref{thm:existence Pn}, the following two assertions are equivalent:
\begin{itemize}
\item more
than one $\TRPZ_n$-quadrangles $\PQ_{n,\alpha,\gamma,a}$ exist.
\item the quadratic polynomial $f_{n,\alpha,\gamma}(x)$ has two
distinct zeros $x_1, x_2$ in an open interval $(0,\,1)$ such that
a quadrangle
$\PQ_{n,\alpha,\gamma,\arccos x_i}$ exists for each $x_i$ $(i=1,2)$.
\end{itemize}
Here the quadratic polynomial $f_{n,\alpha,\gamma}(x)$ has two
distinct zeros $x_1, x_2$ in an open interval $(0,\,1)$
if and only if  the following three are all true:
\begin{enumerate}[(i)]
\item \label{assert:i} $f_{n,\alpha,\gamma}(0)>0$ and $f_{n,\alpha,\gamma}(1)>0$; 

\item \label{assert:ii} $0< axis(n, \alpha, \gamma)<1$; and
       
\item\label{assert:iii} $\Delta_{n,\alpha,\gamma}>0$.
\end{enumerate}

Hence, more than one $\TRPZ_n$-quadrangles $\PQ_{n,\alpha,\gamma,a}$
exist, if and only if condition~\eqref{ineq:discriminant1}
or condition~\eqref{ineq:discriminant1 dual} holds. It is due to
\eqref{assert:alphagamma} of Lemma~\ref{lem:shorta},
Lemma~\ref{lem:concavePn} and the following:
\begin{claim}\label{claim:degenerate} For the three conditions mentioned
 above, the following holds:
\begin{enumerate}
 \item \label{assert:cond1:equiv} 
In case $\pi/2<\alpha<\pi<\gamma<3\pi/2$, inequality~\eqref{ineq:discriminant1}  $\iff$
       \eqref{assert:i} $\&$ \eqref{assert:iii}.

 \item \label{assert:cond1:equiv dual} 
In case $\pi/2<\gamma<\pi<\alpha<3\pi/2$, 
       inequality~\eqref{ineq:discriminant1 dual}  $\iff$
\eqref{assert:i} $\&$ \eqref{assert:iii}.

 \item \label{assert:2c} In each of the above-mentioned two cases, \eqref{assert:i}$\ \&\ \Delta_{n,\alpha,\gamma}\ge0\implies$ \eqref{assert:ii}.
 \end{enumerate}
\end{claim}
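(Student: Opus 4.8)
The plan is to turn each of \eqref{assert:i}, \eqref{assert:ii}, \eqref{assert:iii} into an explicit inequality in $(\alpha,\gamma)$ and then intersect the regions. For part~\eqref{assert:cond1:equiv} I fix the case $\pi/2<\alpha<\pi<\gamma<3\pi/2$, where the signs are forced: $\cot\alpha<0$, $\cot\gamma>0$, $\csc\alpha>0$, $\csc\gamma<0$, $\csc(\pi/n)>0$. First I dispose of \eqref{assert:i}. Since $f_{n,\alpha,\gamma}(0)=-\cot\alpha\cot\gamma$, the sign pattern makes $f_{n,\alpha,\gamma}(0)>0$ automatic, so \eqref{assert:i} reduces to $f_{n,\alpha,\gamma}(1)>0$; feeding the same signs into \eqref{sharp} shows the factor $-\csc\alpha\csc\gamma\csc(\pi/n)$ is positive, whence $f_{n,\alpha,\gamma}(1)>0\iff\sin(\pi/n+\alpha+\gamma)>0$. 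As $\alpha+\gamma\in(3\pi/2,5\pi/2)$ and $n\ge3$ keeps $\pi/n$ small, $\pi/n+\alpha+\gamma$ crosses $2\pi$ exactly once, so this is equivalent to $\alpha+\gamma>2\pi-\pi/n$, i.e.\ to the fence $\gamma>2\pi-\pi/n-\alpha$. For \eqref{assert:iii} I quote Lemma~\ref{lem:degenerate}~\eqref{assert:dgn}: in this case $\Delta_{n,\alpha,\gamma}>0\iff\gamma<\degenerate_n(\alpha)$. Thus \eqref{assert:i} together with \eqref{assert:iii} is exactly the region $2\pi-\pi/n-\alpha<\gamma<\degenerate_n(\alpha)$.

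What remains for part~\eqref{assert:cond1:equiv} is to match this region with \eqref{ineq:discriminant1}, and this is where the boundary geometry and the axis enter. By Lemma~\ref{lem:degenerate}~\eqref{usual} the line $\gamma=2\pi-\pi/n-\alpha$ is tangent to the convex decreasing curve $\gamma=\degenerate_n(\alpha)$ precisely at $\alpha=3\pi/4-\pi/(2n)$, the curve lying on or above the line. The point that forces the extra constraint $\alpha<3\pi/4-\pi/(2n)$ in \eqref{ineq:discriminant1} is the axis condition \eqref{assert:ii}, which part~\eqref{assert:2c} supplies and which I therefore establish jointly with \eqref{assert:cond1:equiv}; I bring it in through the identity $(\dag)$, $axis(n,\alpha,\degenerate_n(\alpha))=-\sec(\pi/n)(\sin(\pi/n)+1)\cot\alpha$, whose value is strictly increasing in $\alpha$ on $(\pi/2,\pi)$ and equals $1$ exactly at $\alpha=3\pi/4-\pi/(2n)$ (the computation already performed in the proof of Lemma~\ref{lem:jogai}). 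Combining \eqref{assert:ii} with the two fences on $\gamma$ and the monotonicity of $\cot$ on $(\pi,3\pi/2)$ cuts the region down to $\pi/2<\alpha<3\pi/4-\pi/(2n)$, which is \eqref{ineq:discriminant1}; the discarded strip $\alpha\ge3\pi/4-\pi/(2n)$ is precisely where Lemma~\ref{lem:jogai} says the zeros of $f_{n,\alpha,\gamma}$ leave $(0,1)$. Part~\eqref{assert:cond1:equiv dual} then requires no new computation: $f_{n,\alpha,\gamma}$, $\Delta_{n,\alpha,\gamma}$ and $axis(n,\alpha,\gamma)$ are invariant under $\alpha\leftrightarrow\gamma$ and \eqref{ineq:discriminant1 dual} is the image of \eqref{ineq:discriminant1} under that swap, so reflecting in the diagonal $\gamma=\alpha$ settles the case $\pi/2<\gamma<\pi<\alpha<3\pi/2$.

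Part~\eqref{assert:2c} is the delicate one and the step I expect to be the main obstacle. The lower bound $axis(n,\alpha,\gamma)>0$ is clean and does not even need \eqref{assert:i}: from $\Delta_{n,\alpha,\gamma}\ge0$ and Lemma~\ref{lem:degenerate}~\eqref{assert:dgn} we get $\gamma\le\degenerate_n(\alpha)$, so $\cot\gamma\ge\cot\degenerate_n(\alpha)$ by monotonicity of $\cot$ on $(\pi,3\pi/2)$, and then $(\dag)$ gives $axis(n,\alpha,\gamma)\ge axis(n,\alpha,\degenerate_n(\alpha))=-\sec(\pi/n)(\sin(\pi/n)+1)\cot\alpha>0$ since $\cot\alpha<0$. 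The upper bound $axis<1$ is the hard half: the very same boundary value passes $1$ at $\alpha=3\pi/4-\pi/(2n)$ and exceeds it beyond, so $\Delta_{n,\alpha,\gamma}\ge0$ by itself cannot hold the axis below $1$. My plan is to use both halves of \eqref{assert:i}: the fence $\gamma>2\pi-\pi/n-\alpha$ bounds $\cot\gamma$ from above, hence $axis(n,\alpha,\gamma)$ from above by its value on the tangent line, and I then verify that this line value stays below $1$ exactly when $\alpha<3\pi/4-\pi/(2n)$, the complementary strip being excluded by Lemma~\ref{lem:jogai}. The subtle point I must handle carefully is that this comparison has to be uniform as $\gamma$ runs between the line and the curve; I would again route it through $(\dag)$, the tangency and convexity in Lemma~\ref{lem:degenerate}~\eqref{usual}, and the monotonicity of $\cot$, so that the single value $axis=1$ at the tangency abscissa governs the whole transition.
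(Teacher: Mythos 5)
Your reductions of \eqref{assert:i} and \eqref{assert:iii} coincide with the paper's own steps \eqref{aux} and \eqref{cond:iiidgn}: via \eqref{sharp}, condition~\eqref{assert:i} becomes $\alpha+\gamma>2\pi-\pi/n$, and via Lemma~\ref{lem:degenerate}~\eqref{assert:dgn}, condition~\eqref{assert:iii} becomes $\gamma<\degenerate_n(\alpha)$; also your lower bound $axis(n,\alpha,\gamma)>0$ via $(\dag)$ and the monotonicity of $\cot$ on $(\pi,\,3\pi/2)$ is a sound variant of the paper's sign argument. The genuine gap is in how you produce the cap $\alpha<3\pi/4-\pi/(2n)$. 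First, in part~\eqref{assert:cond1:equiv} you cut the region $2\pi-\pi/n-\alpha<\gamma<\degenerate_n(\alpha)$ down to \eqref{ineq:discriminant1} by invoking the axis condition~\eqref{assert:ii}; but \eqref{assert:ii} does not occur on the right-hand side of the equivalence you are proving, so this shows at most ``\eqref{ineq:discriminant1} $\iff$ \eqref{assert:i} \& \eqref{assert:ii} \& \eqref{assert:iii}'' and makes part~\eqref{assert:cond1:equiv} depend entirely on part~\eqref{assert:2c}. Second, and decisively, your proof of part~\eqref{assert:2c} misreads Lemma~\ref{lem:jogai}: that lemma does not ``exclude'' the strip $3\pi/4-\pi/(2n)\le\alpha$, $2\pi-\pi/n-\alpha<\gamma<\degenerate_n(\alpha)$. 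Its conclusion is only that $f_{n,\alpha,\gamma}(\cos a)=0$ has no solution $a\in(0,\,\pi)$ there; it asserts nothing against your hypotheses holding on the strip. Indeed the strip is nonempty (by the strict convexity and tangency in Lemma~\ref{lem:degenerate}~\eqref{usual}, $\degenerate_n(\alpha)>2\pi-\pi/n-\alpha$ for every $\alpha\ne 3\pi/4-\pi/(2n)$), and on it $f_{n,\alpha,\gamma}(0)>0$, $f_{n,\alpha,\gamma}(1)>0$ and $\Delta_{n,\alpha,\gamma}>0$ hold while $axis(n,\alpha,\gamma)>1$: for $n=4$, $\alpha=0.7\pi$, $\gamma=1.06\pi$ one computes $f(0)\approx 3.81$, $f(1)\approx 0.29$, $\Delta\approx 5.16$, $axis\approx 2.26$. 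In fact ``$axis>1$ on the strip'' is exactly the first half of the \emph{proof} of Lemma~\ref{lem:jogai}, i.e.\ the opposite of what you need. So your chain assumes the bound on $\alpha$ it was supposed to deliver, and the circle between parts \eqref{assert:cond1:equiv} and \eqref{assert:2c} never gets broken.

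The paper proceeds in the opposite order: it first secures the bound on $\alpha$ through its implication $(\ddag)$ --- from \eqref{assert:i} \& $\Delta_{n,\alpha,\gamma}\ge0$ it derives $\alpha<3\pi/4-\pi/(2n)$ or $\gamma>5\pi/4-\pi/(2n)$, and the second disjunct collapses to the first because $\Delta_{n,\alpha,\gamma}\ge0$ gives $\gamma\le\degenerate_n(\alpha)$ and $\degenerate_n$ is strictly decreasing with value $5\pi/4-\pi/(2n)$ at $3\pi/4-\pi/(2n)$ --- and only afterwards proves $axis<1$, reducing \eqref{ineq:u} by the $\arccot$/addition-formula manipulation to $\tan(\alpha-\pi/2)\tan\left(\alpha-\pi/2+\pi/n\right)\le 1$, which holds since both arguments lie in $(0,\,\pi/2)$ with mean less than $\pi/4$. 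Your tangent-line comparison could legitimately replace that final computation once the cap on $\alpha$ is in hand: under \eqref{aux}, $\cot\gamma<\cot(2\pi-\pi/n-\alpha)=-\cot(\alpha+\pi/n)$ (valid only for $\alpha<\pi-\pi/n$, a caveat you must add, since otherwise the fence gives no upper bound on $\cot\gamma$ at all), whence $axis(n,\alpha,\gamma)<\cos(\pi/n)\bigl(\cos(\pi/n)-\cos(2\alpha+\pi/n)\bigr)^{-1}$, which is less than $1$ precisely for $\alpha<3\pi/4-\pi/(2n)$. Note also that $(\dag)$ gives the axis on the \emph{curve} $\gamma=\degenerate_n(\alpha)$, which is the minimum of the axis over your $\gamma$-interval, so routing the upper bound ``through $(\dag)$'' points at the wrong endpoint. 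As written, without a substitute for $(\ddag)$, parts \eqref{assert:cond1:equiv} and \eqref{assert:2c} remain unproven.
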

 \begin{proof}
 Claim~\ref{claim:degenerate}~\eqref{assert:cond1:equiv} is proved as follows:
  $f_{n,\alpha,\gamma}(0)=-\cot \gamma \cot \alpha>0$ by
the premise.  Hence, condition~\eqref{assert:i} is
equivalent to $f_{n,\alpha,\gamma}(1)>0$. Thus, by the premise and \eqref{sharp},
 \begin{align}
\mbox{condition~\eqref{assert:i}}\iff \alpha+\gamma>2\pi - \frac{\pi}{n}.   \label{aux}
 \end{align}
  By the premise and Lemma~\ref{lem:degenerate}~\eqref{assert:dgn},
  \begin{align}
\mbox{condition~\eqref{assert:iii}}\iff
   \gamma<\degenerate_n(\alpha). \label{cond:iiidgn}
  \end{align}
  See Figure~\ref{thecurves}.
By the premise and Lemma~\ref{lem:jogai}, 
\begin{align*}
(\ddag)\qquad \mbox{condition~\eqref{assert:i}}\ \&\
\Delta_{n,\alpha,\gamma}\ge0\implies \alpha<\frac{3\pi}{4} -
\frac{\pi}{2n}\ \mbox{or}\ \frac{5\pi}{4}- \frac{\pi}{2n}<\gamma
.\end{align*}
By \eqref{aux}, condition~\eqref{assert:i} implies $\alpha<\fra{3\pi}{4} -\fra{\pi}{(2n)}\iff \fra{5\pi}{4}- \fra{\pi}{(2n)}<\gamma$.
Thus, by \eqref{aux} and \eqref{cond:iiidgn}, we have $\eqref{ineq:discriminant1}\iff
       \eqref{assert:i} \ \&\ \eqref{assert:iii}.$
So, Claim~\ref{claim:degenerate}~\eqref{assert:cond1:equiv} holds. 
 The same argument with $\alpha$ and $\gamma$ swapped proves
Claim~\ref{claim:degenerate}~\eqref{assert:cond1:equiv dual}.

\medskip
We prove
 Claim~\ref{claim:degenerate}~\eqref{assert:2c}. 
 $axis(n,\alpha,\gamma)>0$ in either case, because
       $\cot\alpha+\cot\gamma=\sin(\alpha+\gamma)/\sin\alpha\sin\gamma>0$
 follows from $3\pi/2<\alpha+\gamma=2\pi-\delta<2\pi$.

Consider the case $\pi/2<\alpha<\pi<\gamma<3\pi/2$.
 As $\cot \gamma >0$,
\begin{align*}
\mbox{condition~\eqref{assert:ii}}\iff \gamma >\arccot\left({2 \tan \frac{\pi}{n}-\cot
 \alpha}\right)+\pi.
\end{align*}
Hence, by equivalence~\eqref{aux}, condition~\eqref{assert:ii}
 follows from
\begin{align}
\pi-\alpha-\frac{\pi}{n} &\ge \arccot\left({2 \tan \frac{\pi}{n}-\cot
 \alpha}\right). \label{ineq:u}
\end{align} 
The right-hand side is positive, because $\cot\alpha<0$ by $\pi/2<\alpha<\pi$. 
So, inequality~\eqref{ineq:u} is equivalent to
$\cot(\pi-\alpha-\fra{\pi}{n})\le 2\tan(\fra{\pi}{n}) - \cot \alpha
$. Subtract $\tan(\alpha-\pi/2)+\tan(\pi/n)$ from both hand sides of the
last inequality, and then divide them by $\tan(\pi/n)$. Thus, by the
addition formula of $\tan$, inequality~\eqref{ineq:u} is equivalent to
$\tan(\alpha-\pi/2)\tan(\alpha-\pi/2+\fra{\pi}{n})\le 1$. By the
assumption $\pi/2<\alpha<\pi$ and implication (\ddag), this holds because
the two arguments $\alpha-\pi/2$ and $(\alpha-\pi/2+\fra{\pi}{n})$ are
both in the interval $(0,\,\pi/2)$ and have mean less than $\pi/4$. Thus
inequality~\eqref{ineq:u} holds. The other case
$\pi/2<\gamma<\pi<\alpha<3\pi/2$ is proved by the same argument with
$\alpha$ and $\gamma$ swapped. This completes the proof of
Claim~\ref{claim:degenerate}.\qed \end{proof}

\medskip
 Theorem~\ref{thm:near}~\eqref{assert:unique} is proved as
 follows: First observe that there exists exactly one $\TRPZ_n$-quadrangle, if and only if a
 quadrangle $N v_0 v_1 v_2$ exists and
 \begin{enumerate}[(a)]
  \item \label{cond:degenerate one} $f_{n,\alpha,\gamma}(x)$ has
	degenerate~(i.e., double) zero strictly between $0$ and $1$; or
	
  \item \label{assert:one} $f_{n,\alpha,\gamma}(x)$ has
distinct two zeros, but only one  in the interval $(0,\, 1)$.
 \end{enumerate}
  Here the edge-length $a$ is less than
 $\pi/2$, from $\alpha>\pi$ or $\gamma>\pi$, by equivalence~\eqref{cond:deltaconvex} of
 Lemma~\ref{lem:shorta}.
 
We prove 
 that \eqref{cond:degenerate
 one}$\iff(\gamma-\degenerate_n(\alpha))(\alpha-\degenerate_n(\gamma))=0$,
 as follows: Note that condition~\eqref{cond:degenerate one} holds if and
 only if we have all of condition~\eqref{assert:i}, 
 condition~\eqref{assert:ii} and $\Delta_{n,\alpha,\gamma}=0$. By
 Claim~\ref{claim:degenerate}~\eqref{assert:2c} and \eqref{aux}, the
 condition~\eqref{cond:degenerate one} is equivalent to
 $\alpha+\gamma>2\pi-\pi/n\ \&\ \gamma=\degenerate_n(\alpha)$ or to
 $\alpha+\gamma>2\pi-\pi/n\ \&\ \alpha=\degenerate_n(\gamma)$. Because $2\pi - \fra{\pi}{n} - \alpha < \degenerate_n(\alpha)$ by
 Lemma~\ref{lem:degenerate}~\eqref{unusual}, the equation $\gamma=\degenerate_n(\alpha)$ implies  $\alpha+\gamma>2\pi-\pi/n$. So, the
 condition~\eqref{cond:degenerate one} is equivalent to $\gamma=\degenerate_n(\alpha)$ or $\alpha=\degenerate_n(\gamma)$.
 This establishes the desired equivalence.

The quadratic equation $f_{n,\alpha,\gamma}(\cos a)=0$ of $\cos a$ has
the two solutions $a=a_{n,\alpha,\gamma}^+, a_{n,\alpha,\gamma}^-$,
presented at the beginning of Subsection~\ref{subsec:discriminant}.  If
the two solutions are degenerate solution
$a=a_{n,\alpha,\gamma}^+=a_{n,\alpha,\gamma}^-=\arccos(\cot(\pi/n)(\cot\alpha+\cot\gamma)/2)$,
then $\Delta_{n,\alpha,\gamma}=0$.

\begin{claim}\label{claim:degeneratea}The arccosine of the
 degenerate~$($i.e., double$)$
 solution $x$ of\linebreak[4] $f_{n,\alpha,\gamma}(x)=0$ is \eqref{degenerate sol3}
 for $\gamma=\degenerate_n(\alpha)$ and \eqref{degenerate sol7} for
$\alpha=\degenerate_n(\gamma)$.\end{claim}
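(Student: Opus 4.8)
The plan is to lean entirely on the identity $(\dag)$ recorded just above, combined with the reflection identity $\arccos(-t)=\pi-\arccos(t)$; by this point all the substantive computation is already in hand, so the proof is short. First I would observe that the hypothesis $\gamma=\degenerate_n(\alpha)$ forces a double root: by Lemma~\ref{lem:degenerate}~\eqref{assert:dgn} the equation $\gamma=\degenerate_n(\alpha)$ is equivalent to $\Delta_{n,\alpha,\gamma}=0$, so $f_{n,\alpha,\gamma}$ is a perfect square. Its unique (double) zero is the $x$-coordinate of the vertex of the parabola $y=f_{n,\alpha,\gamma}(x)$, and since $f_{n,\alpha,\gamma}$ is monic this is exactly $axis(n,\alpha,\gamma)$ of \eqref{def:axis}. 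Specializing to $\gamma=\degenerate_n(\alpha)$ and substituting the identity $(\dag)$ then yields
\[
  x=axis(n,\alpha,\degenerate_n(\alpha))=-\sec\frac{\pi}{n}\left(\sin\frac{\pi}{n}+1\right)\cot\alpha.
\]

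Next I would apply the arccosine. Because $x$ is the double solution of $f_{n,\alpha,\gamma}(\cos a)=0$, the edge-length satisfies $a=\arccos(\cos a)=\arccos(x)=a_{n,\alpha,\gamma}^+=a_{n,\alpha,\gamma}^-$. Applying $\arccos(-t)=\pi-\arccos(t)$ to the displayed value of $x$ gives
\[
  \arccos(x)=\pi-\arccos\left(\sec\frac{\pi}{n}\left(\sin\frac{\pi}{n}+1\right)\cot\alpha\right),
\]
which is precisely \eqref{degenerate sol3}. The bound $a<\pi/2$ asserted there is immediate: under \eqref{eq:dgn} we have $\pi/2<\alpha<\pi$, hence $\cot\alpha<0$ and so $x>0$, giving $\arccos(x)<\pi/2$; that $0<x<1$ (so $\arccos(x)$ is a genuine length) follows from condition~\eqref{assert:ii}, already established in Claim~\ref{claim:degenerate}~\eqref{assert:2c}.

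Finally, for $\alpha=\degenerate_n(\gamma)$ I would invoke the symmetry $f_{n,\alpha,\gamma}=f_{n,\gamma,\alpha}$ noted after \eqref{assert:v-u}: interchanging the roles of $\alpha$ and $\gamma$ throughout the preceding argument turns \eqref{degenerate sol3} into \eqref{degenerate sol7}, completing the claim. The only place demanding care is the sign inside $(\dag)$, which is exactly what lets the reflection identity convert the negative argument into the clean $\pi-\arccos(\cdots)$ form; beyond that bookkeeping there is no real obstacle, since the heavy trigonometric work lives in Lemma~\ref{lem:degenerate} and in the derivation of $(\dag)$.
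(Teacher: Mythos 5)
Your proposal is correct and takes essentially the same route as the paper's proof, which likewise rests on the case split from Lemma~\ref{lem:degenerate}~\eqref{assert:dgn} together with the identity $(\dag)$ for $axis(n,\alpha,\degenerate_n(\alpha))$, and disposes of the case $\alpha=\degenerate_n(\gamma)$ by the $\alpha\leftrightarrow\gamma$ symmetry. You merely spell out the steps the paper leaves implicit --- that the double zero of the monic quadratic is $axis(n,\alpha,\gamma)$, that $\cot\alpha<0$ under \eqref{eq:dgn}, and the reflection $\arccos(-t)=\pi-\arccos(t)$ yielding the bound $a<\pi/2$ --- which is harmless added detail.
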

\begin{proof} By
 Lemma~\ref{lem:degenerate}~\eqref{assert:dgn}, either
 $\gamma=\degenerate_n(\alpha)$ and $\pi/2<\alpha<\pi<\gamma<3\pi/2$, or
 $\alpha=\degenerate_n(\gamma)$ and
 $\pi/2<\gamma<\pi<\alpha<3\pi/2$. Consider the first case.
 By (\dag),
 $a$ is
 \eqref{degenerate sol3} for $\gamma=\degenerate_n(\alpha)$. The proof
 for case $\alpha=\degenerate_n(\gamma)$ is similar. This completes the
 proof of Claim~\ref{claim:degeneratea}.\qed\end{proof}

It is easy to see that condition~\eqref{assert:one}$\iff f_{n,\alpha,\gamma}(0)f_{n,\alpha,\gamma}(1)<0$. As
  $f_{n,\alpha,\gamma}(0)>0$ by the
  implication~\eqref{assert:alphagamma} of
  Lemma~\ref{lem:shorta},
  \begin{align*}
   \eqref{assert:one}\iff
  f_{n,\alpha,\gamma}(1) = - {\sin\left(\alpha+\gamma+\frac{\pi}{n}\right)}{\csc\alpha\csc\gamma\csc\frac{\pi}{n}}<0.
  \end{align*} By  equivalence~\eqref{aux}, \eqref{assert:one} is
  equivalent to condition~\eqref{ineq:unique} of
  Theorem~\ref{thm:near}, for $\pi/2<\alpha<\pi<\gamma$; and is
  equivalent to condition~\eqref{ineq:unique:gamma} of
  Theorem~\ref{thm:near}, for $\pi/2<\gamma<\pi<\alpha$. 
 Because $f_{n,\alpha,\gamma}(0)>0$ and $f_{n,\alpha,\gamma}(1)<0$ hold, 
  the unique solution $x$ of the quadratic equation $f_{n,\alpha,\gamma}(x)=0$ strictly between $0$
  and $1$ is the smaller solution of the equation. Therefore the edge-length $a$ is $a_{n,\alpha,\gamma}^+$.
Hence Lemma~\ref{lem:concavePn} establishes
 Theorem~\ref{thm:near}~\eqref{assert:unique}.  Thus
 Theorem~\ref{thm:near} is proved. \qed

\bigskip
From Theorem~\ref{thm:phase1} and Theorem~\ref{thm:near}, Theorem~\ref{thm:main} follows.

\section{A quadrangle organizing  both
 non-isohedral tiling and isohedral one over the same skeleton\label{sec:noniso}}
 
Recall an $\TRPZ_6$-quadrangle $\PQ_{6,\alpha,\gamma,a}$ from
Definition~\ref{def:ispdw Q}. 
\begin{theorem}\label{thm:do} Copies of a spherical $4$-gon
$T:=\PQ_{6,\arccos\frac{-1}{2\sqrt7},\frac{4\pi}{3},\arccos\frac{1}{3}}$
 organize both an \emph{isohedral}
 tiling $\T'$ $($Figure~\ref{fig:B234}~$($middle,right$))$ and a
 \emph{non-isohedral} tiling
 $\T$~$($Figure~\ref{fig:B234}~$($middle,left$)$,
 \cite{akama12:_class_of_spher_tilin_by_i}$)$ such that the skeletons
 are the same pseudo-double wheel. The quadratic
                 equation associated to  $T$ of
Theorem~\ref{thm:existence Pn} is $(x-1/3)^2$.\end{theorem}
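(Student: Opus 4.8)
The plan is to dispatch the three assertions in turn, and I begin with the quadratic equation. From $\cos\alpha=-\frac{1}{2\sqrt7}$ one gets $\sin^2\alpha=\frac{27}{28}$, hence $\cot\alpha=-\frac{1}{3\sqrt3}$; since $\gamma=\frac{4\pi}{3}$ we have $\cot\gamma=\cot\frac{\pi}{3}=\frac{1}{\sqrt3}$, and $\cot\frac{\pi}{6}=\sqrt3$. Substituting into $f_{6,\alpha,\gamma}(x)=x^2-\cot\frac{\pi}{6}(\cot\alpha+\cot\gamma)x-\cot\alpha\cot\gamma$ gives linear coefficient $\sqrt3\cdot\frac{2}{3\sqrt3}=\frac23$ and constant term $-\bigl(-\frac19\bigr)=\frac19$, so $f_{6,\alpha,\gamma}(x)=x^2-\frac23x+\frac19=(x-\frac13)^2$. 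Thus $\cos a=\frac13$ is the double root; and since $\cos^2\frac{\pi}{6}(\sin\frac{\pi}{6}+1)^{-2}=\frac13$ and $\tan\alpha=-3\sqrt3$, one checks $\degenerate_6(\alpha)=\pi-\arctan(-\sqrt3)=\frac{4\pi}{3}=\gamma$, so $(\alpha,\gamma)$ lies on the degenerate curve $B_6^{(3)}$ of Theorem~\ref{thm:main}, consistent with the vanishing discriminant.

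For the isohedral tiling, the data satisfy the hypotheses of Theorem~\ref{thm:existence Pn} ($\alpha\in(\pi/2,\pi)$, $\gamma\in(\pi,3\pi/2)$, $a\in(0,\pi/2)$), so the identity $f_{6,\alpha,\gamma}(\cos a)=0$ certifies $T=\PQ_{6,\alpha,\gamma,a}$ as a genuine $\TRPZ_6$-quadrangle. By Fact~\ref{fact:Pquad}~\eqref{assert:bijective} it is then the tile of a unique $\T'\in\TRPZ_6$, and $\T'$ is isohedral by Theorem~\ref{thm:pn} (the implication $(\eqref{assert:Pn}\implies\eqref{assert:isohedral})$, valid since $n=6\ge4$). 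This supplies $\T'$.

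The heart of the matter is the non-isohedral tiling $\T$. The crucial coincidence at this degenerate tile is that its fourth edge is also of length $a$: solving $\cos(\pi-a)=\cos a\cos c+\sin a\sin c\cos\gamma$ with $\cos a=\frac13$ and $\cos\gamma=-\frac12$ gives $\cos c=\frac13$, i.e.\ $c=a$, so $T$ is of edge-type $aaab$, its unique non-$a$ edge being $\alpha\delta$ with $\cos b=-\frac59$. Hence the $b$-edges of any tiling by copies of $T$ form a perfect face-matching of the $12$-face pseudo-double wheel, exactly the situation exploited in Theorem~\ref{thm:cube}. In $\T'$ this matching is the symmetric one; my plan is to exhibit---as in \cite{akama12:_class_of_spher_tilin_by_i} and Figure~\ref{fig:B234} (middle-left)---a \emph{different}, geometrically realizable perfect face-matching whose copies of $T$ glue into a spherical tiling $\T$ over the same $12$-face pseudo-double wheel but with an angle/edge distribution not matching Figure~\ref{chart of TRPZ}, so that $\T\notin\TRPZ_6$. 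Non-isohedrality is then automatic: by the equivalence $(\eqref{assert:Pn}\iff\eqref{assert:isohedral})$ of Theorem~\ref{thm:pn}, a tiling whose skeleton is the pseudo-double wheel of $12$ faces but which does not belong to $\TRPZ_6$ cannot be isohedral.

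I expect the main obstacle to be the geometric consistency of this alternative assembly: one must verify that the copies of $T$ placed according to the non-symmetric $b$-matching genuinely close up into a metric spherical tiling---vertices at compatible latitudes and longitudes, matching edge-lengths, no overlaps. This is precisely where the degeneracy $\cos a=\frac13$ (equivalently $c=a$ and $a_{6,\alpha,\gamma}^+=a_{6,\alpha,\gamma}^-$) is indispensable, and it is the content I would import from \cite{akama12:_class_of_spher_tilin_by_i}; the residual bookkeeping---confirming that the resulting distribution is not that of Figure~\ref{chart of TRPZ}, whence $\T\notin\TRPZ_6$---is then routine.
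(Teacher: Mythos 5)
Your proposal is correct, and the numerics all check out ($\cot\alpha=-1/(3\sqrt3)$, $\cot\gamma=1/\sqrt3$, $f_{6,\alpha,\gamma}(x)=(x-\tfrac13)^2$, $\gamma=\degenerate_6(\alpha)$, $c=a$ with $\cos b=-5/9$, and the double root $\arccos\tfrac13$ agreeing with formula~\eqref{degenerate sol3}), but it is organized in the opposite direction from the paper's proof. The paper starts from the non-isohedral tiling $\T$ of \cite{akama12:_class_of_spher_tilin_by_i}, whose metric data are recalled in Figure~\ref{chart:a}, and shows its tile is an $\TRPZ_6$-quadrangle by the intrinsic, computation-free criterion of Fact~\ref{fact:Pquad}~\eqref{assert:quadrangle}: the two meridian edges at $N$ have length $a$; $N$ and $S$ are antipodal because there are two congruent paths between them; the vertex $\delta$ is at distance $a$ from $S$, so the diagonal from $N$ to it has length $\pi-a$; and the area is $4\pi/12=2\pi/6$. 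Fact~\ref{fact:Pquad}~\eqref{assert:bijective} then hands over the isohedral partner $\T'$ at once, and the quadratic $(x-1/3)^2$ is computed last, as a by-product. You instead go parameters-first: compute $f$ and the degeneracy, obtain $\T'$ through the classification machinery, and only then reconcile with $\T$ by computing $b$ and $c$. One fine point to keep explicit on your route: Theorem~\ref{thm:existence Pn} presupposes that a quadrangle with the given data exists, so what actually secures existence is your observation that $(\alpha,\gamma)$ lies on the curve $B_6^{(3)}$, i.e.\ Theorem~\ref{thm:main}~\eqref{assert:unamb} (equivalently Theorem~\ref{thm:near}, case~\eqref{eq:dgn}). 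As for the ``main obstacle'' you flag — closure of the alternating assembly into a genuine spherical tiling — this is no gap relative to the paper: the paper imports exactly the same content from \cite{akama12:_class_of_spher_tilin_by_i} and does not re-verify it either. What each approach buys: the paper's area argument avoids all trigonometric existence checks and makes it structurally transparent that the tile of $\T$ and the tile of $\T'$ coincide; your version is more self-contained on non-isohedrality, which you derive cleanly from Theorem~\ref{thm:pn}~(\eqref{assert:Pn}$\iff$\eqref{assert:isohedral}, valid since $n=6\ge4$) once $\T\notin\TRPZ_6$ is noted, whereas the paper simply cites its predecessor for that property; and your explicit check $c=a$ usefully exposes the geometric coincidence (edge-type $aaab$ with a $b$-edge perfect face-matching) that makes the second, less symmetric assembly possible at all.
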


\begin{proof} The edge-lengths and inner angles of $\T$ are as in
 Figure~\ref{chart:a}~(lower). So, a
tile~(designated $N 2 3 4$ in the figure) of $\T$ has two
edges of length $a$, both incident to the vertex $N$. $N$ is antipodal
to a vertex $S$, because there are two congruent paths between the two vertices in
Figure~\ref{chart:a}~(lower). The edge between a vertex $\delta$~(designated by
$3$ in Figure~\ref{chart:a}~(lower)) and $S$ is $a$, by the
figure. The area of the tile of $\T$ is $4\pi/12$, as
$\T$ is a spherical tiling by twelve congruent tiles. So, the tile
of $\T$ is an $\TRPZ_6$-quadrangle, by
Fact~\ref{fact:Pquad}~\eqref{fact:Pquad}.
By the definition of $f_{n,\alpha,\gamma}(x)$ in Theorem~\ref{thm:existence Pn}, we have $f_{6,\arccos(\fra{-1}{2\sqrt7}),\fra{4\pi}{3}}(x)=(x-1/3)^2$. \qed
\end{proof}

We conjecture that
$\PQ_{6,\arccos\frac{-1}{2\sqrt7},\frac{4\pi}{3},\arccos\frac{1}{3}}$ is
the only spherical 4-gon such that copies of it organize both a
non-isohedral tiling and an isohedral tiling over a pseudo-double wheel.
The conjecture is true by
\cite[Theorem~2]{akama12:_class_of_spher_tilin_by_i}, once the
following is proved: from any spherical non-isohedral tiling by
congruent $\PQ_{n,\alpha,\gamma,a}$ over a pseudo-double wheel, we can
obtain such a tiling $\T$ satisfying the condition~(2) of
\cite[Theorem~2]{akama12:_class_of_spher_tilin_by_i}.

To generalize Theorem~\ref{thm:do}, we want to enumerate all spherical
polygons which organize both \emph{non-isohedral} tilings and
\emph{isohedral} tilings over the same skeletons.  This is a weak
inverse problem of the following theorem:
\begin{proposition}[\protect{Gr\"unbaum-Shephard~\cite{MR661770}}]The
skeleton of a spherical isohedral tiling is exactly a pseudo-double
wheel, the skeleton of a bipyramid, that of a Platonic solid, or that of
 an Archimedean dual.
\end{proposition}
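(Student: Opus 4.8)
The plan is to classify spherical isohedral tilings by their symmetry groups, which are finite subgroups of $O(3)$, and then to pass to the dual tiling so as to reduce the problem to the classical enumeration of vertex-transitive (isogonal) spherical tilings.

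First I would record the setup. For any spherical isohedral tiling $\mathcal{T}$, the group $G$ of its symmetry operations is a finite subgroup of $O(3)$, and by tile-transitivity $G$ acts transitively on the tiles. Fixing one tile $T_0$ and its site-symmetry subgroup $H=\mathrm{Stab}_G(T_0)$, the tiles are in bijection with the cosets $G/H$, so the combinatorial type of $\mathcal{T}$ is determined by the pair $(G,H)$ up to conjugacy. Dualizing $\mathcal{T}$ --- choosing one vertex in the interior of each tile and joining the vertices of adjacent tiles --- produces a spherical tiling $\mathcal{T}^{*}$ on which $G$ acts transitively on \emph{vertices}; thus $\mathcal{T}^{*}$ is isogonal and its vertex set is a single $G$-orbit on $\Sset^2$. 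Since the skeleton of $\mathcal{T}$ is the planar dual of the skeleton of $\mathcal{T}^{*}$, it suffices to classify the combinatorial types of isogonal spherical tilings and then dualize; crucially, this reduction is purely combinatorial and so is insensitive to whether the tiles of $\mathcal{T}$ are convex or concave.

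Next I would run through the finite subgroups of $O(3)$ (the cyclic and dihedral families $C_n,C_{nv},C_{nh},S_{2n},D_n,D_{nh},D_{nd}$ and the polyhedral groups $T,T_d,T_h,O,O_h,I,I_h$) and, for each, determine the isogonal tilings whose vertices form a single orbit. This is a Wythoff-type orbit construction: the orbit of a suitably chosen point, together with the edges forced by adjacency, yields exactly the uniform spherical tilings, which fall into the regular tilings, the semiregular (Archimedean) tilings, the prisms coming from $D_{nh}$, and the antiprisms coming from $D_{nd}$. Dualizing each family recovers the four classes in the statement: a regular tiling dualizes to a Platonic skeleton, a semiregular tiling to an Archimedean dual, a prism to a bipyramid, and an antiprism to a pseudo-double wheel (the antiprism dual). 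Conversely each of the four families is realized by an explicit isohedral tiling, so the list is exhaustive.

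The hard part is completeness together with the disposal of the degenerate low-symmetry cases. For the groups with few or no reflections, such as $C_n$ or $S_{2n}$, one must verify that no genuine edge-to-edge tiling of $\Sset^2$ by topological disks arises with $G$ acting transitively on tiles unless $G$ is already one of the listed groups; this is exactly the kind of exclusion argument carried out in the proof of Theorem~\ref{thm:pn} via the Schoenflies decision tree of \cite{cotton09:_chmic_applic_of_group_theor}. The remaining delicate point is to confirm that the orbit construction exhausts every isogonal combinatorial type --- that each vertex-transitive spherical tiling is the orbit tiling of some point for one of these groups. Since this completed enumeration is precisely the content of Gr\"unbaum and Shephard's theorem, I would ultimately cite \cite{MR661770} for the final case analysis rather than reproduce it in full.
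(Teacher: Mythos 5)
First, note what the paper actually does with this statement: the proposition is imported verbatim from Gr\"unbaum--Shephard, and the paper contains no proof of it --- the only ``proof'' on offer is the citation \cite{MR661770}. Your proposal, stripped of its scaffolding, ends the same way: you explicitly defer ``the final case analysis,'' i.e.\ the completeness of the enumeration, to \cite{MR661770}. But completeness is not a delicate residual point; it is the entire theorem. The reduction you set up (isohedral tiling $\to$ dual isogonal tiling $\to$ orbit construction $\to$ the four families) proves nothing until one shows that \emph{every} isogonal spherical tiling is combinatorially of one of the listed types, and that is exactly the statement to be proved, read through duality. So as a standalone proof the proposal is circular precisely where the work lies; as an attribution it coincides with the paper's treatment, which is to cite the source and move on.

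Second, the intermediate steps you do spell out have genuine defects even as a sketch. (a) The claim that the Wythoff-type orbit construction yields ``exactly the uniform spherical tilings'' conflates \emph{isogonal} with \emph{uniform}: vertex-transitivity forces neither regular faces nor congruent edges, and the orbit of a generic point under a chiral group gives an isogonal, non-uniform tiling; what is true --- and what requires proof --- is only the assertion about combinatorial types. (b) The dualization step is not ``purely combinatorial'' once you intend to feed its output into a geometric classification: to make the group $G$ act on the dual you must choose the dual vertices $G$-equivariantly (a point of each tile fixed by the tile's stabilizer --- not automatic for a possibly concave tile, which need not lie in an open hemisphere, so centroid-averaging can degenerate), and you must realize the dual edges as non-crossing geodesic arcs. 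Without this, or a combinatorial substitute (a classification of planar graphs admitting vertex-transitive automorphism actions plus a symmetric-realization theorem), the passage from the abstract dual graph to a geometric isogonal tiling is a gap. Both defects are exactly what is settled in \cite{MR661770}, which is why the paper --- and, in the end, your proposal as well --- simply cites it.
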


\begin{figure}\centering
  \begin{tikzpicture}[scale=0.3,axis/.style={very thick, ->, >=stealth'}]
  \node at (4,7)
  {\pgftext{\includegraphics[scale=0.3]{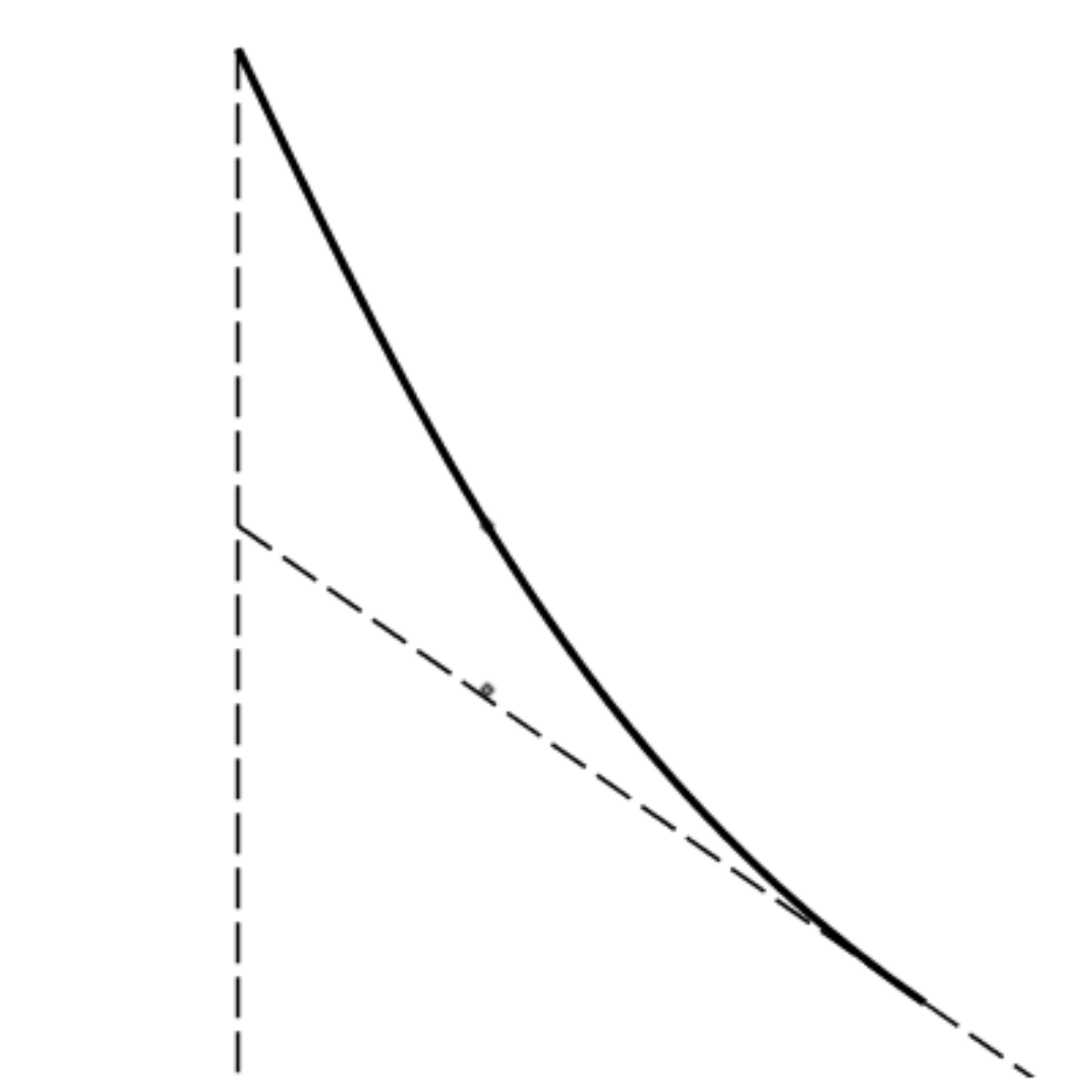}}};
  \draw[axis] (0, 0) -- (0, 16) node(xline)[above] {$\gamma$ [rad]};
  \draw[axis] (0, 0) -- (16,0) node(yline)[below] {$\alpha$  [rad]};
      \draw[<-]  (3.5,7.25) -- (14,8.9);
   \node at (17,8.7)   {\pgftext{\includegraphics{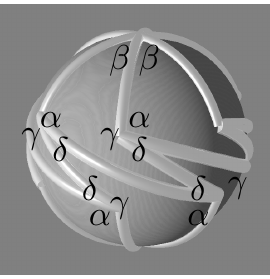}}};
  \node at (-1,13.3) {$\frac{3\pi}{2}$};
  \node at (-2, 7.25) {$\frac{3\pi}{2} - \frac{\pi}{n}$};
  \draw[dotted] (0,7.25) -- (3.2,7.25);
  \draw[dotted] (3.2,7.2) -- (3.2,0);
  \node at (3.2,7.25) {$\bullet$};
  \node at (1.2,8.6) {$B_n^{(4)}$};
  \node at (5,2) {$B_n^{(2)}$};
  \node at (3.35,-1.3) {\scriptsize $\arccos\dfrac{-1}{2\sqrt7}$};
  \draw[dotted] (8.81, 1.1) -- (8.81, 0);
  \draw[dotted] (8.81, 1.1) -- (0, 1.1);
  \node at (-2.2,1.1) {$\frac{5\pi}{4} - \frac{\pi}{2n}$};
  \node at (8.81,-1) {$\frac{3\pi}{4} - \frac{\pi}{2n}$};

  \node at (-0.0,-1) {$\frac{\pi}{2}$};
  \node at   (.1683097875, 10.98907025) {$\#$};
  \draw[<-,dotted]  (.8, 10.9) -- (12, 15);
  \node at (14, 15.1) {\pgftext{\includegraphics[scale=0.07]{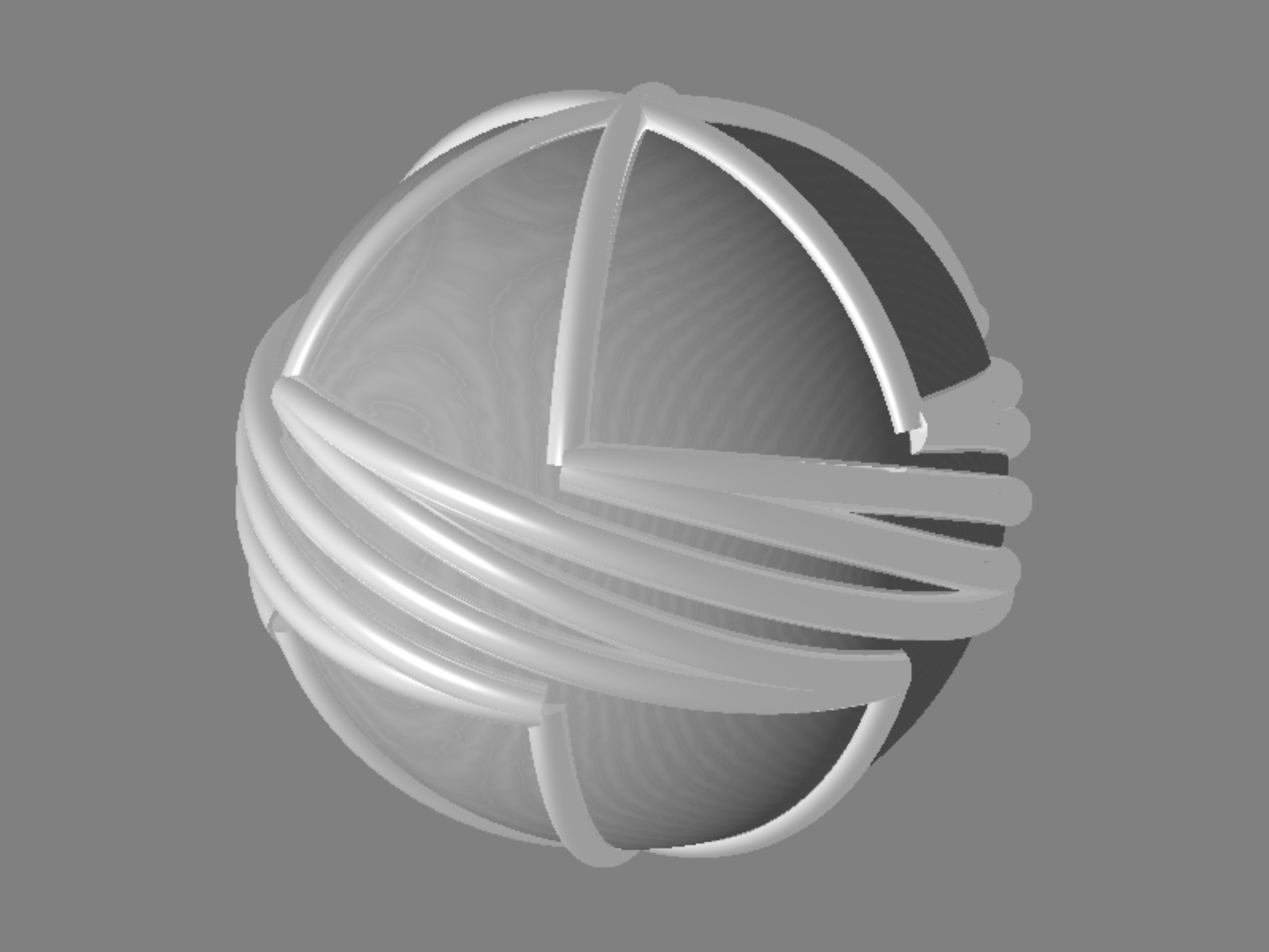}}};
  \node at (20, 15.1) {\pgftext{\includegraphics[scale=0.07]{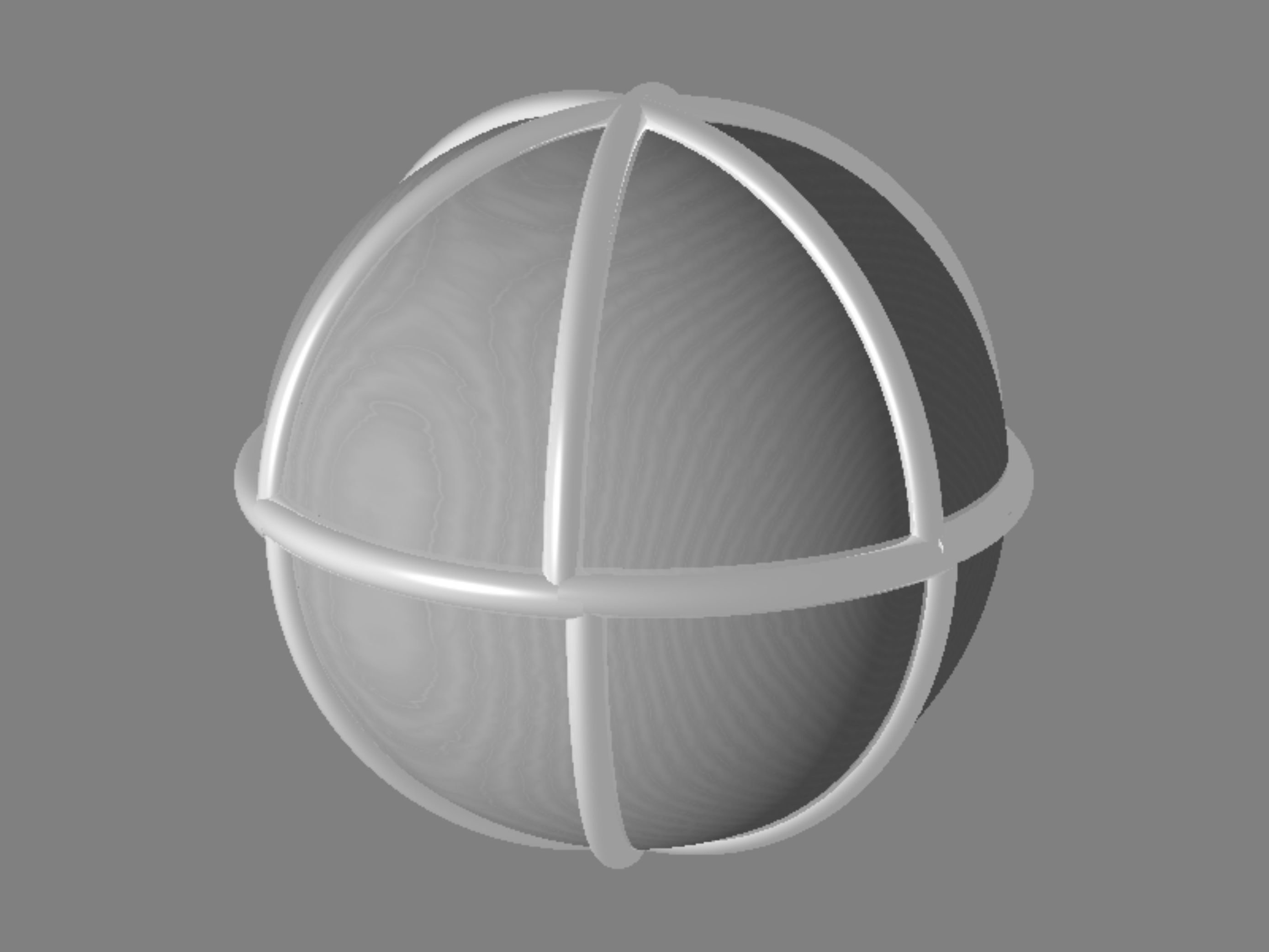}}};
   \draw[->]  (-9,10) -- (2.8,7.35);
\node at (-10,8.9)
  {\pgftext{\includegraphics{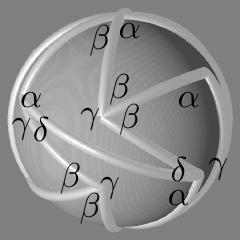}}};
   \node at (-15,11) {$\T$};
   \node at (11.5,11) {$\T'$};
  \node at (3.2, 6.8) {$\circledcirc$};
\draw[dotted,<-]  (3.84,6.45) -- (11,4);  
 \node at (14,2.6) {\pgftext{\includegraphics[scale=0.07]{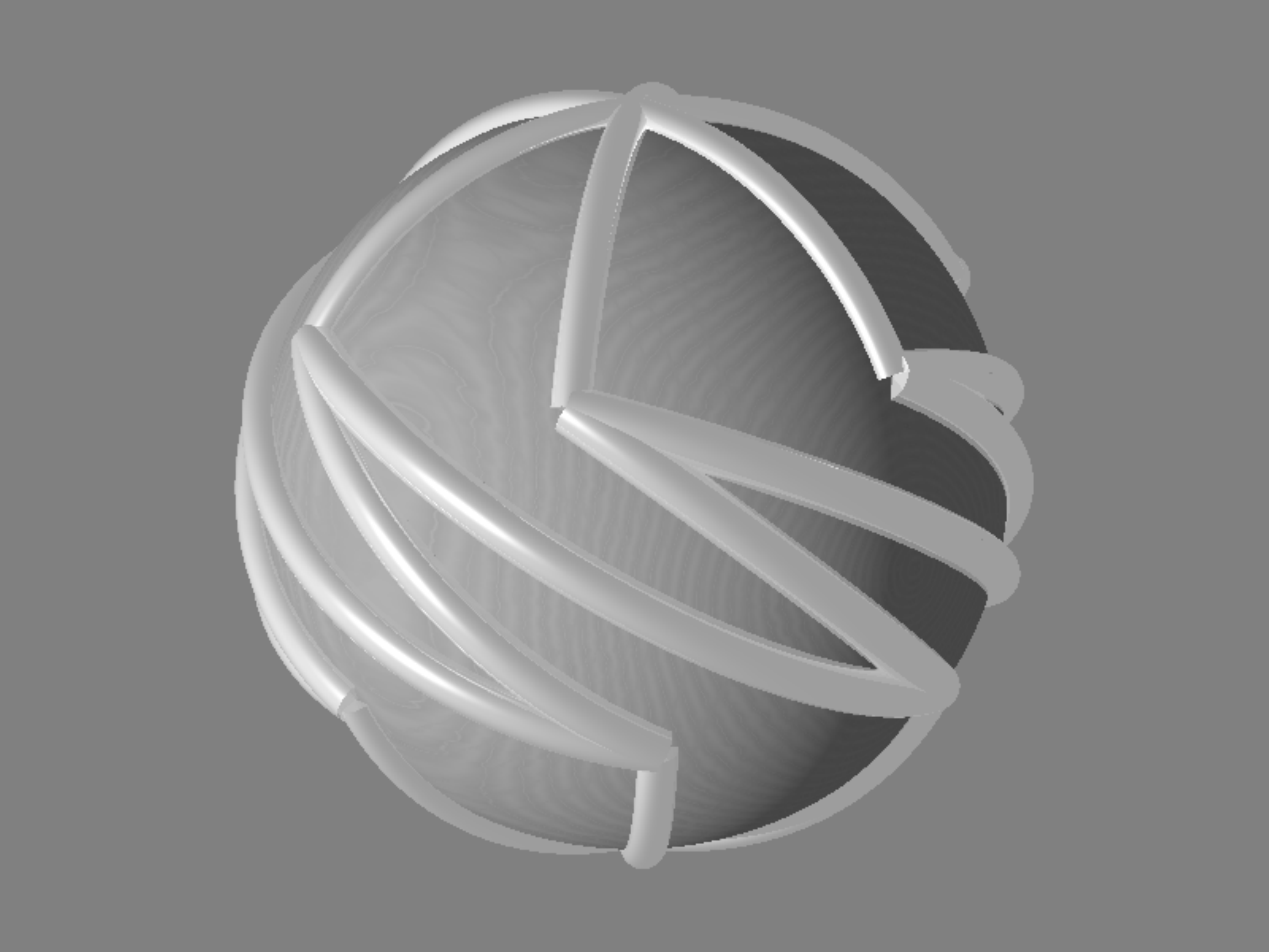}}};
 \node at (20,2.6) {\pgftext{\includegraphics[scale=0.07]{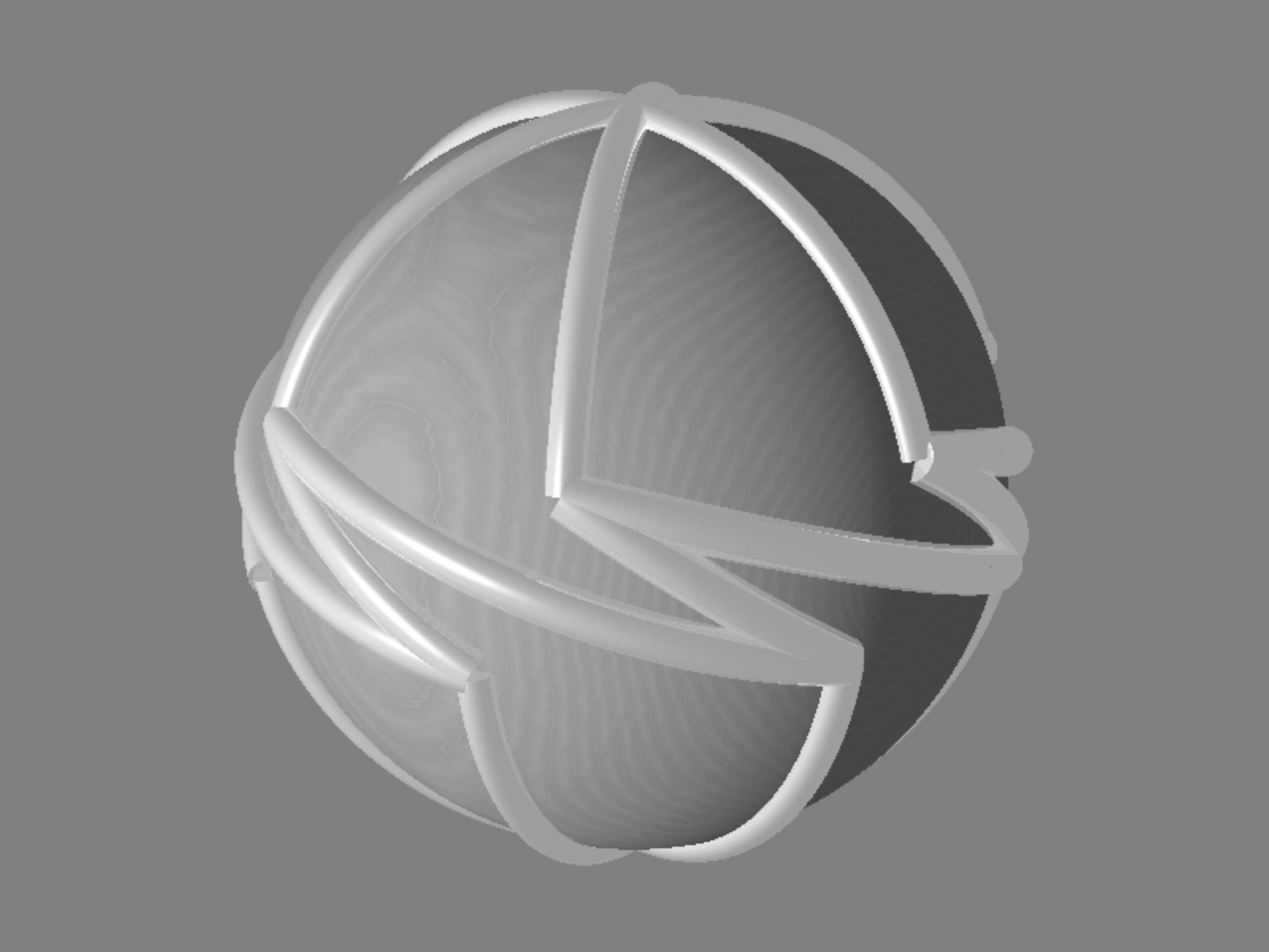}}};
  \end{tikzpicture}
 \caption{The copies of the tiles of the rightmost, middle
 spherical isohedral tiling $\T'$ organize
 a spherical \emph{non-isohedral} tiling $\T$ over the skeleton of $\T'$.
 The middle graph is an excerpt of
 Figure~\ref{fig:phasediagram3}.
 The right four images are \emph{the} spherical isohedral
 tilings by $\PQ_{n,\alpha,\gamma,a}$  for  $n=6$ and
 designated  $(\alpha,\gamma)$ on the graph.
 The distribution of inner angles and that of edge-length on the
 skeleton of $\T$ is the reflection of Figure~\ref{fig:Iso_by_A_tiles}.
 \label{fig:B234}
 }
\end{figure}
\begin{figure}[ht]\centering
 \begin{tikzpicture}[axis/.style={very thick, ->, >=stealth'}]
  \node at (0,-6)  {\includegraphics[scale=0.27]{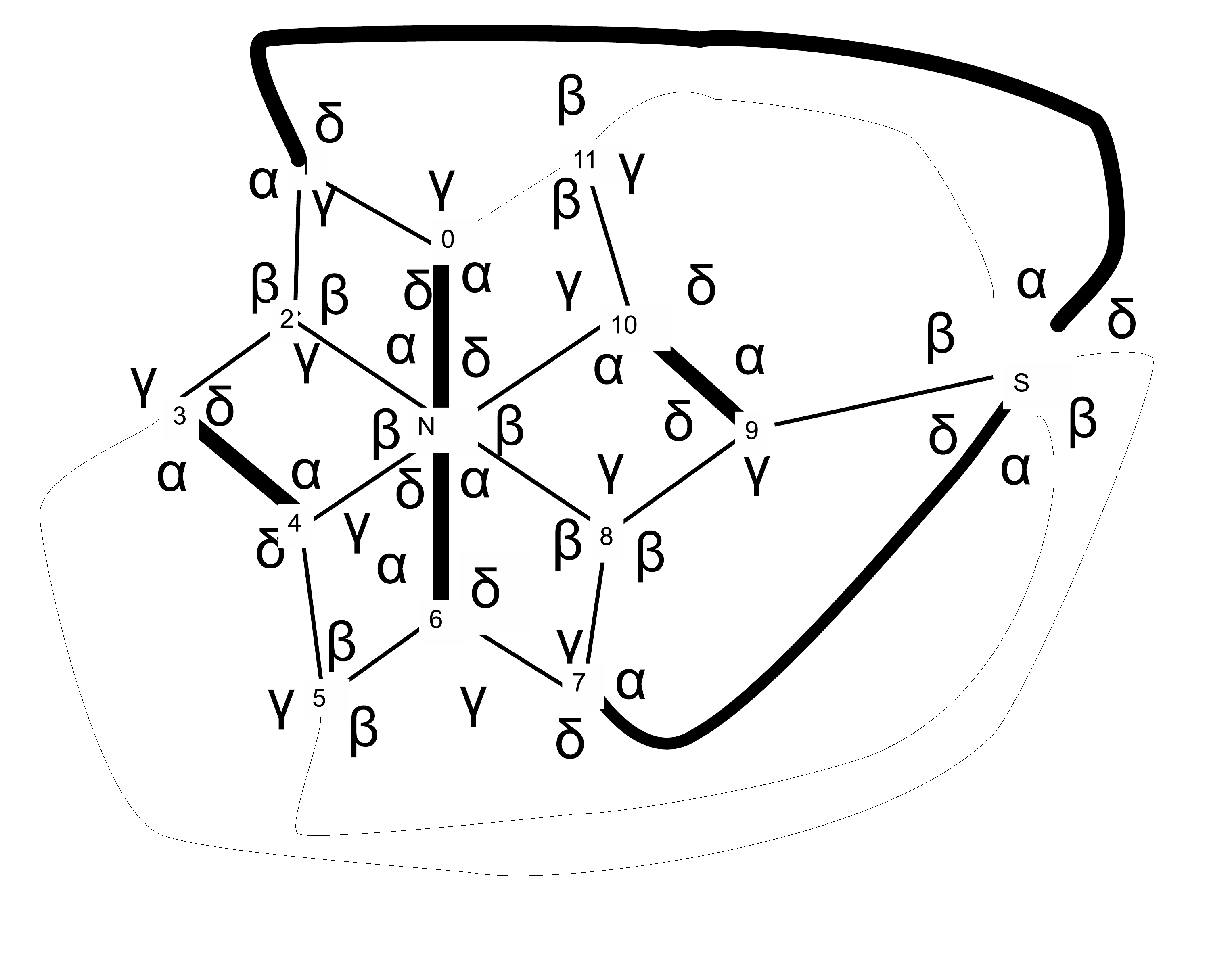}};
 \end{tikzpicture}
 \caption{The skeleton, edge-lengths, and inner angles of the reflection
 of $\T$. The solid, and the thick edges
have length $a=c=\arccos(1/3)$ and
 $b=\arccos(-5/9)$. $\alpha=\arccos(-1/(2\sqrt7))$, $\beta=\pi/3$,
 $\gamma=4\pi/3$, and $\delta=\arccos(5/(2\sqrt7))$.  See
 \cite{akama12:_class_of_spher_tilin_by_i} for detail of $\T$.\label{chart:a}\label{fig:Iso_by_A_tiles} }
\end{figure}

\section*{Acknowledgement}
The author thanks an anonymous referee.  Thanks also goes to Kosuke Nakamura for the earlier manuscript.

\end{document}